\documentclass[english]{amsart}

\usepackage{esint}
\usepackage[svgnames]{xcolor} 
\usepackage{color}
\usepackage[colorlinks,citecolor=red,pagebackref,hypertexnames=false,breaklinks]{hyperref}
\usepackage{pgf,tikz}
\usepackage{pdfsync}

\usepackage{dsfont}
\usepackage{url}
\usepackage[utf8]{inputenc}
\usepackage[T1]{fontenc}
\usepackage{lmodern}
\usepackage{babel}
\usepackage{mathtools}  
\usepackage{amssymb}
\usepackage{lipsum}
\usepackage{mathrsfs}
\usepackage{color}
\usepackage[skip=2.2pt plus 1pt, indent=12pt]{parskip}
\usepackage{stmaryrd}
\usepackage{soul}

\newtheorem{theorem}{Theorem}[section]
\newtheorem{proposition}{Proposition}[section]
\newtheorem{lemma}{Lemma}[section]

\newtheorem{corollary}{Corollary}[section]

\newtheorem{remark}{Remark}[section]
\newtheorem{example}{Example}[section]

\usepackage{cite}

\numberwithin{equation}{section}

\title[Heat Equations in Spectral Barron Spaces]{Heat Equations in Spectral Barron Spaces}

\author{Mourad Choulli}
\address{Universit\'{e} de Lorraine, 34 cours L\'{e}opold, 54052 Nancy cedex, France}
\email{mourad.choulli@univ-lorraine.fr}

\author{Shuai Lu}
\address{School of Mathematics Sciences, SKLCAM and LMNS, Fudan University, 220 Handan Road, Shanghai 200433, China}
\email{slu@fudan.edu.cn}

\author{Hiroshi Takase}
\address{Department of Mathematics, Okayama University, 3-1-1 Tsushima-naka, Kita-ku, Okayama 700-8530, Japan}
\email{takase@math.okayama-u.ac.jp}

\thanks{This work was supported by National Key Research and Development Programs of China (No. 2023YFA1009103), JSPS KAKENHI Grant Numbers JP25K17280, JP23KK0049, NSFC (No. 92570106) and Science and Technology Commission of Shanghai Municipality (No. 23JC1400501).}
\date{}

\begin{document}

\begin{abstract}
Spectral Barron spaces, characterized by an \(L^1\)-based Fourier-Lebesgue norm, have earned significant attention in approximation theory due to their remarkable capacity to represent functions via shallow neural networks with controlled complexity. Meanwhile, recent theoretical advances have firmly established an intrinsic and profound connection between these function spaces and the regularity theory of elliptic partial differential equations. Building upon this foundational interplay, the present work undertakes a systematic and comprehensive investigation into the well-posedness of heat equations formulated within the spectral Barron spaces framework. Specifically, we rigorously establish the core aspects of well-posedness, including the existence, uniqueness, and stability of solutions, under suitable assumptions on the source terms and conductivity coefficients. We also investigate a typical parabolic inverse problem, namely the backward heat equation, for which we derive a logarithmic conditional stability estimate. To the best of our knowledge, this constitutes the first stability estimate for inverse problems within the spectral Barron space setting. Moreover, we extend our analytical results to address the more intricate setting of time-fractional heat equations, which govern anomalous diffusion phenomena and introduce nonlocal temporal memory effects. In this extended context, we provide a characterization of the corresponding heat kernels, deriving decay estimates, regularity properties, thereby enriching the theoretical landscape of evolutionary PDEs within the spectral Barron spaces setting.
\end{abstract}

\subjclass[2020]{35K05,35K08,35K58,35K90,35R30.}

\keywords{Spectral Barron spaces, sectorial operator, intermediate spaces, analytic semigroup, heat equations, heat-kernel, time-fractional heat equations.}

\maketitle

\tableofcontents

\section{Introduction}\label{s1}
The interplay between function spaces and partial differential equations (PDEs) lies at the heart of modern analysis. Classical frameworks such as Sobolev, Besov, and H\"older spaces have long served as the standard setting for establishing well‑ posedness, regularity, and numerical approximation of solutions to a wide range of PDE problems, c.f. \cite{Ev,GT,Gr,QV}. Each of these spaces is tailored to specific types of regularity, capturing differentiability in the \(L^p\) sense, fractional smoothness, or uniform continuity, and each has its own strengths and limitations. For instance, Sobolev spaces are ideal for energy estimates and variational formulations, while Besov spaces excel in characterizing nonlinear approximation and boundary trace phenomena. However, with the advent of neural networks and their remarkable success in solving high‑dimensional PDEs, there has been a growing need for function spaces that are not only mathematically tractable but also aligned with the approximation capabilities of neural networks. This demand has propelled spectral Barron spaces to the forefront of contemporary research.

Spectral Barron spaces, denoted by \(B^s\) (or \(B^s(\mathbb{R}^n)\)) throughout this paper, are defined via a weighted Fourier–Lebesgue norm; a rigorous definition will be provided in Section \ref{s2}. Their origin lies in the seminal work of Barron \cite{Ba} on the approximation of functions by shallow neural networks. Barron proved that any function with a finite spectral Barron norm, for instance, the integral of \(|\xi|\,|\widehat{f}(\xi)|\) over \(\mathbb{R}^n\), can be uniformly approximated by a shallow neural network with a sigmoid activation function and \(m\) neurons, achieving an approximation error of order \(O\left(\frac{1}{\sqrt{m}}\right)\). This result was later refined and extended to ReLU‑type activation functions, as demonstrated in \cite{KB_IEEE}. These findings have profound implications for the theory of deep learning, as they provide deterministic guarantees on generalization error without curse of dimensionality and illuminate why overparameterized networks can still generalize effectively when the target function belongs to a spectral Barron space class, i.e \cite{LiaoMing}.

Beyond approximation theory, by introducing different weights i.e. $(1+|\xi|)^s$ or $(\sqrt{1+|\xi|^2})^{s}$, spectral Barron spaces have recently emerged as a powerful tool in the analysis of elliptic PDEs. A series of works (see, e.g., \cite{CLLZ,CLT,LKH,Xu2020}) has demonstrated that solutions of second‑order elliptic equations with sufficiently regular coefficients, posed on the whole space or subject to suitable boundary conditions, automatically belong to \(B^s\) for certain \(s\). This is a non‑trivial fact: elliptic regularity in classical Sobolev spaces only yields \(H^{m+2}\) regularity, whereas the stronger integrability of the Fourier transform imposed by the \(B^s\) norm places a more stringent constraint on the high‑frequency decay of the solution. The key observation is that the Green's function (or fundamental solution) of a uniformly elliptic operator has a Fourier symbol that behaves like \(|\xi|^{-2}\) at infinity. When this symbol is multiplied by the Fourier transform of a source term in \(B^s\), the resulting product remains integrable in the weighted \(L^1\) sense. In fact, it precisely satisfies the weighted integrability condition for \(B^{s+2}\), thanks to the convolution structure and the extra decay of the source term. This yields a clean frequency‑domain characterization of the solution and leads to sharp a priori estimates that are not readily obtainable in Sobolev spaces. Moreover, the spectral Barron norm controls both the solution and its derivatives up to a certain order, thereby enabling the derivation of stability bounds for numerical schemes based on neural networks, c.f. \cite{LKH,Xu2020}.

Despite this success for elliptic problems, the study of evolutionary PDEs within the spectral Barron framework remains largely unexplored. 
Only a few references are available on this topic. We therefore refer to the very recent work \cite{CLSSS}, which introduces anisotropic spectral Barron spaces via a global space-time Fourier extension. These spaces are specifically designed for space-fractional parabolic equations with lower-order terms, for which dimension-independent regularity results and neural network approximation rates are established. On the other hand, if the time variable is treated classically while the spatial variable remains within the spectral Barron spaces framework, no results have been reported to the best of our knowledge. As the simplest and most fundamental parabolic equation, the heat equation serves as a natural starting point. 
The solution to a standard heat equation is given by the convolution with the Gaussian heat kernel \(G_t(x) = (4\pi t)^{-n/2} e^{-\frac{|x|^2}{4t}}\), whose Fourier transform is \(\widehat{G_t}(\xi) = e^{-t|\xi|^2}\). This exponential multiplier is markedly different from the algebraic symbol of elliptic operators. In the Fourier domain, the heat equation corresponds to multiplying the initial data and the time‑integrated source term by the factor \(e^{-t|\xi|^2}\). A question then arises: does this multiplier preserve the spectral Barron norm? Moreover, the heat equation introduces a time variable that changes the regularity profile: for positive time, the solution becomes infinitely smooth in the classical sense, but the spectral Barron norm may decay at a certain rate. Characterizing this decay rate is essential for understanding the long‑time behavior and for deriving error bounds when approximating the solution by neural networks at each time step. To further broaden the scope, we will consider the time‑fractional heat equation, which models anomalous diffusion processes, where the mean‑squared displacement grows sublinearly in time. Such a phenomenon has been observed in many physical, biological, and financial systems. In particular, the fractional derivative introduces a memory kernel that renders the evolution nonlocal in time; the solution can no longer be expressed by a simple exponential semigroup but involves other structures. It is worth noting that none of these aspects have been systematically addressed in the existing literature on spectral Barron spaces. 

Motivated by the gaps identified above, the present paper focuses on the (fractional) heat equation within the spectral Barron space setting. As highlighted in \cite{CLT,LuM}, the Laplacian is a sectorial operator on the spectral Barron space \(B^0\); it therefore generates an analytic semigroup. Leveraging this property, we investigate the existence and uniqueness of solutions to heat equations in \(B^0\). We also investigate a typical parabolic inverse problem, namely the backward heat equation, for which we derive a logarithmic conditional stability estimate. Our results rely on the well-established theory of evolution equations associated with sectorial operators. For clarity, we restrict ourselves to a few representative examples, although many other results could naturally be obtained. Furthermore, by adapting certain proofs from \cite{CY}, originally developed for \(m\)-dissipative operators, we establish existence and uniqueness results for time-fractional heat equations in \(B^0\). We also propose an extension to heat equations with space-dependent coefficients. Using a density argument, we show that the Gaussian heat kernel in the \(L^2\) setting is also the heat kernel of the Laplacian in \(B^0\); interestingly, the proofs of certain properties of this heat kernel in \(B^0\) turn out to be less technical than their counterparts in \(L^2\).

Our work complements several lines of research. First, it extends the elliptic theory of Barron spaces (e.g., \cite{CLLZ,CLT,LKH,Xu2020}) to the parabolic realm. Second, it contributes to the extensive literature on fractional diffusion by providing a new functional-analytic framework that is particularly well suited to neural network approximation. In particular, the obtained results builds a bridge between rigorous PDE theory and the rapidly evolving field of scientific machine learning, offering a solid foundation for algorithms that employ deep networks as ansatz for time‑dependent problems. Although there have been numerical studies using neural networks for heat equations, most lack rigorous error bounds in frequency‑based norms; our results initialize to address this gap.

The remainder of this article is organized as follows. Section \ref{s2} is devoted to preliminaries. The main properties of spectral Barron spaces established in \cite{CLT} are recalled in Subsection \ref{sb2.1}. In Subsection \ref{sb2.2}, we demonstrate that the Laplacian is a sectorial operator on \(B^0\), and in Subsection \ref{sb2.3}, we examine the intermediate spaces associated with the Laplacian on \(B^0\) and present an embedding result. Spaces of vector-valued continuous functions of a single variable, used later in this article, are introduced in Subsection \ref{sb2.4}. Section \ref{s3} deals with the existence and uniqueness of solutions for heat equations, including semilinear ones. In this section, we also study the Gaussian heat kernel as the kernel of the heat equation on \(B^0\) and the backward heat equation. Cauchy problems for linear time-fractional heat equations are addressed in Section \ref{s4}. Finally, in Section \ref{s5}, we extend certain results obtained for the heat equation in \(B^0\) to heat equations with space-dependent coefficients, and we also discuss their heat kernels.

\section{Preliminaries}\label{s2}

In this section, we collect the definitions and necessary preliminaries for the present work. Following \cite{CLT}, we will use the shorthand notations \(\mathscr{S}\), \(B^s\), \(L^p\), etc., in place of the more cumbersome \(\mathscr{S}(\mathbb{R}^n)\), \(B^s(\mathbb{R}^n)\), \(L^p(\mathbb{R}^n)\), and so on. We recall that \(\mathscr{S}\) denotes the Schwartz space on \(\mathbb{R}^n\), while \(B^s\) stands for the spectral Barron spaces employed throughout this article; their precise definitions are given in the following subsection. We also adopt the notation \(C_b^k\), \(k\in \mathbb{N}\cup\{0\}\), for the space of bounded functions on \(\mathbb{R}^n\) that are \(k\)-times continuously differentiable and have bounded derivatives up to order \(k\). This space is equipped with the norm
\[
\|f\|_{C_b^k}:=\sup\left\{\|\partial^\alpha f\|_{L^\infty};\; \alpha=(\alpha_1,\ldots ,\alpha_n)\in \mathbb{N}^n\cup\{0\},\; |\alpha|:=\alpha_1+\ldots +\alpha_n\le k\right\}.
\]
For \(0<\beta \le 1\), we define the Hölder seminorm \([\cdot]_\beta\) by
\[
[f]_\beta:=\sup\left\{\frac{|f(x)-f(y)|}{|x-y|^\beta};\; x,y\in \mathbb{R}^n,\; x\ne y\right\}.
\]
For \(k\in \mathbb{N}\cup\{0\}\), let
\[
C_b^{k,\beta}:=\left\{f\in C_b^k;\; [\partial^\alpha f]_\beta<\infty,\; \alpha\in \mathbb{N}^n\cup\{0\},\; |\alpha|=k\right\},
\]
and endow it with the norm
\[
\|f\|_{C_b^{k,\beta}}:=\|f\|_{C_b^k}+\sup_{|\alpha|=k}[\partial^\alpha f]_\beta.
\]
It is well known that both \(C_b^k\) and \(C_b^{k,\beta}\), equipped with their respective norms, are Banach spaces.

\subsection{Spectral Barron spaces $B^s$, $s\geq 0$}\label{sb2.1}
In this subsection, we collect the properties of spectral Barron spaces that will be used in the sequel. To this end, recall that, for \(s \ge 0\), the spectral Barron space \(B^s\) is defined by
\[
B^s := \{f\in C_b^0;\;\langle\xi\rangle^s\hat{f}\in L^1\},
\]
where \(\langle\xi\rangle:=\sqrt{1 + |\xi|^2}\). Equipped with the norm
\[
\|f\|_{B^s}:=\|\langle\xi\rangle^s\hat{f}\|_{L^1},\quad f\in B^s,
\]
\(B^s\) is a Banach space. Moreover, if \(0 \leq s \leq t\), we have the continuous embedding \(B^t \hookrightarrow B^s\), together with the norm inequality
\[
\|f\|_{B^s} \leq \|f\|_{B^t},\quad f \in B^t.
\]

For \(0\le r<t\), \(1\le p\le \infty\), and \(0<\theta\le 1\), we recall the definition of the real interpolation space \((B^r, B^t)_{\theta, p}\) as follows:
\[
(B^r, B^t)_{\theta, p} := \left\{ f \in B^r ; \; \rho^{-\theta-\frac{1}{p}} K(\cdot, f) \in L^p\left((0, \infty)\right) \right\},
\]
with the usual convention that \(\frac{1}{\infty}=0\), where
\[
K(\rho, f) := \inf \left\{ \|g\|_{B^r} + \rho \|h\|_{B^t}; \; g \in B^r, \; h \in B^t \; \text{such that} \; g + h = f \right\}.
\]
The Banach space \((B^r, B^t)_{\theta, p}\) is endowed with its natural norm:
\[
\|f\|_{(B^r, B^t)_{\theta, p}} := \left\| \rho^{-\theta-\frac{1}{p}} K(\cdot, f) \right\|_{L^p\left((0, \infty)\right)},\quad f\in (B^r, B^t)_{\theta, p}.
\]

The main properties of the spectral Barron spaces \(B^s\) (\(s\ge 0\)), which we established in \cite{CLT}, are summarised in the following theorem.

\begin{theorem}\cite{CLT}\label{thm1}
$\mathrm{(i)}$ Let \(0 \le r \le s\le t\), and \(\alpha \in [0, 1]\) be such that \(s = \alpha t + (1 - \alpha) r\). Then the following inequality holds:
\begin{equation}\label{ii}
\|f\|_{B^s} \leq \|f\|_{B^r}^{1-\alpha} \|f\|_{B^t}^{\alpha}, \quad f \in B^t.
\end{equation}
\\
$\mathrm{(ii)}$ The space \(\mathscr{S}\) is dense in \(B^s\) for every \(s \geq 0\).
\\
$\mathrm{(iii)}$ For \(s \geq 0\) and \(t > s + \frac{n}{2}\), it holds \(\langle \xi \rangle^{s - t} \in L^2\), the embedding \(H^t \hookrightarrow B^s\), and the estimate
\[
\|f\|_{B^s} \leq \|\langle \xi \rangle^{s - t}\|_{L^2} \|f\|_{H^t}, \quad f \in H^t.
\]
$\mathrm{(iv)}$ For every \(s \geq 0\), if \(f, g \in B^s\), then \(fg\in B^s\) and the product estimate holds:
\begin{equation}\label{prod}
\|fg\|_{B^s} \leq 2^{\frac{s}{2}} (2\pi)^{-n} \|g\|_{B^s} \|f\|_{B^s}.
\end{equation}
\\
$\mathrm{(v)}$ For all \(s\ge 0\), it holds the continuous embedding \(B^s\hookrightarrow C_b^{\lfloor s\rfloor}\), where \(\lfloor s\rfloor\) denotes the integer part of \(s\).
\\
$\mathrm{(vi)}$ For every integer \(k \geq 0\), and for \(0 < \theta < 1\) and \(0 < \gamma < \theta\), the embedding \(B^{k+\theta} \hookrightarrow C_b^{k,\gamma}\) holds.
\\
$\mathrm{(vii)}$ If \(0 \leq r < s < t\), \(\theta = \frac{s - r}{t - r}\), and \(0 < \tilde{\theta} < \theta\), then
\[
(B^r, B^t)_{\theta, 1} \hookrightarrow B^s \hookrightarrow (B^r, B^t)_{\tilde{\theta}, 1}.
\]
\\
$\mathrm{(viii)}$ For every integer \(k \geq 0\), it holds \(W^{n+k,1} \hookrightarrow B^k\).
\\
$\mathrm{(ix)}$ For all \(p\ge 1\), integers \(k\ge 0\), and for \(0<\theta<1\) and \(0<\sigma <\frac{\theta}{p}\), it holds that \(B^{k+\theta}\subset W_{\rm{loc}}^{k+\sigma,p}\).
\end{theorem}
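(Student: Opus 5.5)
This theorem collects nine properties of $B^s$ from \cite{CLT}. I would prove them one at a time, working on the Fourier side with the convention $f(x)=(2\pi)^{-n}\int e^{ix\cdot\xi}\hat f(\xi)\,d\xi$.

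\emph{(i), (iii) and (iv).}
\begin{itemize}
\item For (i), write $\langle\xi\rangle^s=\langle\xi\rangle^{(1-\alpha)r}\langle\xi\rangle^{\alpha t}$ and apply H\"older's inequality with exponents $1/(1-\alpha)$ and $1/\alpha$ to $|\hat f|=|\hat f|^{1-\alpha}|\hat f|^{\alpha}$.
\item For (iii), use Cauchy--Schwarz: $\int\langle\xi\rangle^s|\hat f|\le \|\langle\xi\rangle^{s-t}\|_{L^2}\|\langle\xi\rangle^t\hat f\|_{L^2}$. The first factor is finite exactly when $2(t-s)>n$. Normalizing the $H^t$ norm with the Fourier weight gives the stated constant.
\item For (iv), use $\widehat{fg}=(2\pi)^{-n}\hat f*\hat g$ together with Peetre's inequality $\langle\xi\rangle^s\le 2^{s/2}\langle\xi-\eta\rangle^s\langle\eta\rangle^s$. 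Then Fubini gives the bound. Continuity and boundedness of $fg$ are immediate.
\end{itemize}

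\emph{(ii) and (v).}
\begin{itemize}
\item For (v), differentiate under the Fourier inversion integral: $|\partial^\alpha f|\le (2\pi)^{-n}\int|\xi|^{|\alpha|}|\hat f|$ for $|\alpha|\le\lfloor s\rfloor$. Dominated convergence gives continuity of the derivatives.
\item For (ii), take $f\in B^s$, so that $\langle\xi\rangle^s\hat f\in L^1$. Approximate $\langle\xi\rangle^s\hat f$ in $L^1$ by $C_c^\infty$ functions $\varphi_k$. Then $\psi_k:=\mathcal F^{-1}(\langle\xi\rangle^{-s}\varphi_k)\in\mathscr S$ and $\|\psi_k-f\|_{B^s}=\|\varphi_k-\langle\xi\rangle^s\hat f\|_{L^1}\to0$.
\end{itemize}

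\emph{H\"older embedding (vi).} Apply the previous estimate to $\partial^\alpha f$ with $|\alpha|=k$. Use $|e^{ix\xi}-e^{iy\xi}|\le 2^{1-\gamma}|\xi|^\gamma|x-y|^\gamma$. This gives $[\partial^\alpha f]_\gamma\lesssim\|f\|_{B^{k+\gamma}}\le\|f\|_{B^{k+\theta}}$. The argument in fact works even for $\gamma=\theta$; the strict inequality $\gamma<\theta$ is harmless.

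\emph{Interpolation (vii).}
\begin{itemize}
\item Left embedding: for $f=g+h$, split frequencies. On $\langle\xi\rangle\le\lambda$, bound $\langle\xi\rangle^s|\hat f|$ by $\lambda^{s-r}\langle\xi\rangle^r|\hat f|$. On $\langle\xi\rangle>\lambda$, use the weight $\lambda^{s-t}\langle\xi\rangle^t$. Choosing dyadic $\lambda$ and decomposing, one gets $\|f\|_{B^s}\lesssim\int_0^\infty\rho^{-\theta-1}K(\rho,f)\,d\rho$. The cleanest route is the standard characterization: $(X,Y)_{\theta,1}$ embeds in any space $Z$ with $\|f\|_Z\le C\|f\|_X^{1-\theta}\|f\|_Y^\theta$ (class $J$). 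By (i), $B^s$ is such a space.
\item Right embedding: estimate $K(\rho,f)$ by the frequency cutoff $g=\mathcal F^{-1}(\hat f\mathbf 1_{\langle\xi\rangle>\lambda})$, $h=f-g$, with $\lambda=\rho^{-1/(t-r)}$. This gives $K(\rho,f)\le\int\min(\langle\xi\rangle^r,\rho\langle\xi\rangle^t)|\hat f|$. Then Tonelli yields $\int_0^\infty\rho^{-\tilde\theta-1}\min(\langle\xi\rangle^r,\rho\langle\xi\rangle^t)\,d\rho=c\,\langle\xi\rangle^{r+\tilde\theta(t-r)}\le c\,\langle\xi\rangle^s$.
\item The cut pieces are only continuous, not necessarily in $C_b^0$ with the required integrability. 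However, their Fourier transforms are in weighted $L^1$, so they do belong to $B^r$ and $B^t$.
\end{itemize}
I expect the left embedding in (vii) to be the main technical step.

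\emph{(viii) and (ix).}
\begin{itemize}
\item For (viii), for $|\beta|\le n+k$ we have $|\xi^\beta\hat f|\le\|\partial^\beta f\|_{L^1}$. Hence $(1+|\xi|)^{n+k}|\hat f|\lesssim\|f\|_{W^{n+k,1}}$. This alone gives only an $L^\infty$ bound, and $\langle\xi\rangle^{-n}$ is not integrable. I would therefore refine it using $L^1$ moduli of continuity (Besov-type $B^{n/1}_{1,1}$-style dyadic estimates): on each dyadic annulus, bound $\int_{A_j}|\hat f|$ by Hausdorff--Young and Bernstein estimates for the Littlewood--Paley pieces. If that fails, I would consult the argument in \cite{CLT}.
\item For (ix), combine (vi) locally with fractional Sobolev--Slobodeckij seminorms. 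On bounded sets, $C^{k,\gamma}\subset W^{k+\sigma,p}_{\rm loc}$ whenever $\sigma<\gamma$: the Gagliardo integrand $|x-y|^{\gamma p-n-\sigma p}$ is locally integrable. Choosing $\gamma<\theta$ then requires only $\sigma<\theta$, which is weaker than the stated condition $\sigma<\theta/p$, so the claim follows.
\end{itemize}
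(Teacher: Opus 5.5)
The paper gives no proof of this theorem; it is quoted from \cite{CLT}, so your arguments have to stand on their own. For (i)--(vii) and (ix) they do. In fact they give slightly more: (vi) with $\gamma=\theta$, and (vii) with $\tilde\theta=\theta$. Also, (vii) is easier than you expect. For any splitting $f=g+h$ one has $\int\min(\langle\xi\rangle^r,\rho\langle\xi\rangle^t)|\hat f|\,d\xi\le\|g\|_{B^r}+\rho\|h\|_{B^t}$. Together with your frequency cutoff, this gives $K(\rho,f)=\int\min(\langle\xi\rangle^r,\rho\langle\xi\rangle^t)|\hat f(\xi)|\,d\xi$ exactly. Hence $\|f\|_{(B^r,B^t)_{\theta,1}}=\frac{1}{\theta(1-\theta)}\|f\|_{B^s}$, with no class-$J$ machinery needed.

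The genuine gap is (viii). Your pointwise bound $|\xi^\beta\hat f|\le\|\partial^\beta f\|_{L^1}$ only yields $W^{n+k+1,1}\hookrightarrow B^k$. The proposed Littlewood--Paley repair cannot reach the borderline. On $A_j=\{|\xi|\sim 2^j\}$, the bound $\|\widehat{\Delta_j f}\|_{L^\infty}\le\|\Delta_j f\|_{L^1}$ plus Bernstein gives $\int_{A_j}\langle\xi\rangle^k|\hat f|\,d\xi\lesssim\sum_{|\beta|=n+k}\|\Delta_j\partial^\beta f\|_{L^1}$. Summing over $j$ then requires $\partial^\beta f\in B^0_{1,1}$, i.e.\ $f\in B^{n+k}_{1,1}$. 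But $\partial^\beta f\in L^1$ only gives $\sup_j\|\Delta_j\partial^\beta f\|_{L^1}<\infty$, and the sum over $j$ diverges logarithmically in general.

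This loss is not an artifact of the method: for $n=1$ the assertion is false. Let $\phi(\xi)=1/\log(e+|\xi|)$; it is even, convex and decreasing on $[0,\infty)$, with $\phi(0)=1$ and $\phi(\xi)\to 0$. By P\'olya's criterion, $\phi=\hat g$ for some $0\le g\in L^1(\mathbb{R})$. Put $f=g\ast\mathbf{1}_{[0,1]}$. Then $f\in L^1$ and $f'=g-g(\cdot-1)\in L^1$, so $f\in W^{1,1}(\mathbb{R})$. Yet $|\hat f(\xi)|=\frac{2|\sin(\xi/2)|}{|\xi|\log(e+|\xi|)}\notin L^1(\mathbb{R})$, so $f\notin B^0$. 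Convolving again with $\mathbf{1}_{[0,1]}$ gives counterexamples for every $k$.

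For $n\ge 2$ the borderline embedding does hold, but only through a genuinely $L^1$-endpoint estimate. An example is the Hardy-type inequality $\int_{\mathbb{R}^n}|\hat u(\xi)||\xi|^{1-n}\,d\xi\le C\|\nabla u\|_{L^1}$ (Bourgain; Pe{\l}czy\'nski--Wojciechowski), applied to $u=\partial^\gamma f$ with $|\gamma|=n+k-1$. Any dimension-blind dyadic argument like yours would also ``prove'' the false case $n=1$, so it cannot supply this. As written, then, (viii) is unproved in your proposal. It needs either a stronger hypothesis ($W^{n+k+1,1}$ or $B^{n+k}_{1,1}$), or the restriction $n\ge 2$ plus the endpoint inequality above.
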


For later use, we recall from \cite[Proposition 1.2.3]{Lun} that the following embeddings hold for all \(0<\theta <1\) and all \(1\le p_1\le p_2\le \infty\):
\begin{equation}\label{intr1}
B^2\hookrightarrow (B^0,B^2)_{\theta,p_1}\hookrightarrow (B^0,B^2)_{\theta,p_2}\hookrightarrow(B^0,B^2)_{\theta,\infty}\hookrightarrow B^0.
\end{equation}
Moreover, for all \(0<\theta_1<\theta_2\le 1\), we have
\begin{equation}\label{intr2}
(B^0,B^2)_{\theta_2,\infty}\hookrightarrow (B^0,B^2)_{\theta_1,1}.
\end{equation}
Furthermore, by \cite[Corollary 1.2.7]{Lun}, for every \((\theta,p)\in N\), there exists a constant \(c=c(\theta,p)>0\) such that
\begin{equation}\label{intr3}
\|f\|_{(B^0,B^2)_{\theta,p}}\le c\|f\|_{B^0}^{1-\theta}\|f\|_{B^2}^\theta,\quad f\in B^2.
\end{equation}
Here and throughout, \(N:=(0,1)\times [1,\infty]\).

\begin{remark}\label{rem1}
{\rm
For \(s\ge 0\), we introduce the following spectral Barron space of negative order:
\[
\tilde{B}^{-s}:=\{ f\in \mathscr{S}';\; \langle \xi \rangle^{-s}\mathscr{F}^{-1}f\in L^\infty\}.
\]
This space is endowed with the norm
\[
\|f\|_{\tilde{B}^{-s}}:= \|\langle \xi \rangle^{-s}\mathscr{F}^{-1}f\|_{L^\infty},\quad f\in \tilde{B}^{-s}.
\]
From \cite[Proposition 2.2]{CLT}, it is known that \((B^s)'\), the dual of \(B^s\), can be identified with \(\tilde{B}^{-s}\).

Let \(0\le r<t\), \(1\le p<\infty\), and \(0<\theta<1\). Since \(B^t\cap B^r=B^t\) is dense in \(B^r\), applying \cite[Theorem 1.18]{Lu} yields the duality relation
\[
\left[(B^r,B^t)_{\theta,p}\right]'=(\tilde{B}^{-t},\tilde{B}^{-r})_{\theta,p'},
\]
where \(p'\in (1,\infty]\) is the conjugate exponent of \(p\). This duality result, combined with Theorem \ref{thm1} (vii), implies that for \(0 \leq r < s < t\), \(\theta = \frac{s - r}{t - r}\), and \(0 < \tilde{\theta} < \theta\), we have the embeddings
\[
(\tilde{B}^{-t},\tilde{B}^{-r})_{\tilde{\theta},\infty}\hookrightarrow \tilde{B}^{-s}\hookrightarrow (\tilde{B}^{-t},\tilde{B}^{-r})_{\theta,\infty}.
\]
}
\end{remark}

\subsection{The Laplacian as an operator acting on $B^0$}\label{sb2.2}

Define the unbounded operator \( A: B^0 \rightarrow B^0 \) by
\[
Au := \Delta u, \quad u \in D(A) := B^2,
\]
and recall that the resolvent set of \( A \) is defined by
\[
\rho(A) := \{ \lambda \in \mathbb{C} ; \; \lambda - A : D(A) \rightarrow B^0 \text{ is a bijection and } (\lambda-A)^{-1}\in \mathscr{B}(B^0)\}.
\]
As established in \cite{CLT}, \(A\) is closed and has dense domain.

For a Banach space \(X\) with norm \(\|\cdot\|\), a linear unbounded operator \(\mathbf{A}:D(\mathbf{A})\subset X\rightarrow X\) is said to be sectorial if there exist \(\theta\in (\frac{\pi}{2},\pi)\) and \(\omega\in \mathbb{R}\) such that
\[
\rho(\mathbf{A})\supset S_{\theta,\omega}:=\{z\in \mathbb{C}\setminus\{\omega\};\; |\arg(z-\omega)|<\theta\},
\]
and
\[
\sup\{\|(\lambda -\omega)(\lambda-\mathbf{A})^{-1}\|_{\mathscr{B}(X)};\; \lambda \in S_{\theta,\omega}\}<\infty.
\]
By \cite[Definition 2.0.2]{Lun}, a sectorial operator \(\mathbf{A}:D(\mathbf{A})\subset X\rightarrow X\) generates an analytic semigroup \((e^{t\mathbf{A}})_{t\ge 0}\). This semigroup is strongly continuous whenever \(D(\mathbf{A})\) is dense in \(X\). We shall make use of the following sufficient condition for an unbounded operator \(\mathbf{A}:D(\mathbf{A})\subset X\rightarrow X\) to be sectorial (see, e.g., \cite[Proposition 2.1.11]{Lun}): if there exist \(\omega \in \mathbb{R}\) and \(M>0\) such that \(\{z\in \mathbb{C};\; \Re z\ge \omega\} \subset \rho(\mathbf{A})\) and
\[
\|\lambda (\lambda -\mathbf{A})^{-1}\|_{\mathscr{B}(X)}\le M,\quad \Re \lambda \ge \omega,
\]
where $\mathscr{B}(X)$ denotes the operator norm on $X$, then \(\mathbf{A}\) is sectorial.

We then establish the sectorial property of the Laplacian operator $A$ on the spectral Barron space $B^0$. Similar results are found in \cite[Corollary 3.1]{CLT} and \cite[Proposition 2.1]{LuM}.

\begin{proposition}\label{pro1}
It holds that the set $\{z\in \mathbb{C};\; \Re z>0\}\subset \rho(A)$ and that, for $\Re \lambda >0$, the estimate  
\[
\|\lambda (\lambda-A)^{-1}\|_{\mathscr{B}(B^0)}\le 1
\]  
is valid. 
Therefore, $A$ is sectorial.
\end{proposition}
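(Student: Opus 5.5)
The plan is to work entirely on the Fourier side, where $A$ becomes multiplication by $-|\xi|^2$ (with the paper's normalization of $\hat{\ }$; any constant factor such as $4\pi^2$ is harmless). Fix $\lambda$ with $\Re\lambda>0$ and $f\in B^0$. The candidate resolvent is
\[
R_\lambda f := \mathscr{F}^{-1}\Big(\frac{\hat f(\xi)}{\lambda+|\xi|^2}\Big).
\]
The key pointwise fact is that for $\Re\lambda>0$ and $t=|\xi|^2\ge 0$ we have $|\lambda+t|\ge |\lambda|$. Indeed, $|\lambda+t|^2=|\lambda|^2+2t\Re\lambda+t^2\ge|\lambda|^2$. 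Consequently
\[
\Big|\frac{\lambda}{\lambda+|\xi|^2}\Big|\le 1,\qquad \Big|\frac{\langle\xi\rangle^2}{\lambda+|\xi|^2}\Big|\le C(\lambda),
\]
where the second bound follows because $|\lambda+|\xi|^2|\ge c(\lambda)(1+|\xi|^2)$. For $|\xi|$ large this holds trivially. For $|\xi|$ bounded it holds since $|\lambda+|\xi|^2|\ge|\lambda|>0$.

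The steps are as follows.

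\emph{Step 1.} Show that $R_\lambda f\in B^2$. Since $\hat f\in L^1$, the function $\hat f/(\lambda+|\xi|^2)$ satisfies $\langle\xi\rangle^2\hat f/(\lambda+|\xi|^2)\in L^1$. Its inverse Fourier transform is therefore a bounded continuous function $u$ with $\hat u=\hat f/(\lambda+|\xi|^2)$, so $u\in B^2$.

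\emph{Step 2.} Check that $(\lambda-A)u=f$. This holds because $\widehat{\Delta u}=-|\xi|^2\hat u$, which is valid in $\mathscr S'$, and $\lambda\hat u+|\xi|^2\hat u=\hat f$. So $\lambda-A$ is surjective from $B^2$ onto $B^0$.

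\emph{Step 3.} Prove injectivity. If $u\in B^2$ and $\lambda u-\Delta u=0$, then $(\lambda+|\xi|^2)\hat u=0$ as an $L^1$ function. Since $\lambda+|\xi|^2\neq0$, it follows that $\hat u=0$, and hence $u=0$ because $u$ is continuous and the Fourier transform is injective on $\mathscr S'$.

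\emph{Step 4.} Obtain the bound. We have
\[
\|\lambda R_\lambda f\|_{B^0}=\Big\|\frac{\lambda}{\lambda+|\xi|^2}\hat f\Big\|_{L^1}\le\|\hat f\|_{L^1}=\|f\|_{B^0}.
\]
Hence $(\lambda-A)^{-1}\in\mathscr B(B^0)$ with $\|\lambda(\lambda-A)^{-1}\|\le1$, and therefore $\{\Re z>0\}\subset\rho(A)$.

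\emph{Step 5.} Deduce sectoriality. Apply the sufficient condition \cite[Proposition 2.1.11]{Lun} with any $\omega>0$, say $\omega=1$, and $M=1$. The half-plane $\{\Re z\ge1\}$ is contained in $\rho(A)$ and the uniform bound holds there.

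The only step requiring care is Step 1/Step 2. One must justify that $u$ lies in $B^2$ in the paper's sense, namely that it is in $C_b^0$ with the weighted $L^1$ condition, and that the distributional identity $\widehat{\Delta u}=-|\xi|^2\hat u$ matches the meaning of $A$ on $D(A)=B^2$. Both are routine, since $\langle\xi\rangle^2\hat u\in L^1$ gives $u\in C_b^2$ by Theorem \ref{thm1}(v). The norm estimate itself is immediate from the elementary inequality $|\lambda+t|\ge|\lambda|$.
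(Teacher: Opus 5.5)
Your proposal is correct and follows the paper's proof: you solve $(\lambda-\Delta)u=f$ on the Fourier side via $\hat u=\hat f/(\lambda+|\xi|^2)$ and use $|\lambda+|\xi|^2|\ge|\lambda|$ for $\Re\lambda>0$ to obtain $\|\lambda(\lambda-A)^{-1}\|_{\mathscr{B}(B^0)}\le 1$. You also make explicit several points the paper leaves implicit, all handled correctly: that $u\in B^2$, that $\lambda-A$ is injective, and that the half-plane sufficient condition for sectoriality is applied with some $\omega>0$ (e.g.\ $\omega=1$).
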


\begin{proof}
Let $f\in B^0$ and $\lambda \in \mathbb{C}$ such that $\Re \lambda>0$. If $u\in B^2$ is the solution of 
\begin{equation}\label{res}
(\lambda -\Delta )u=f,
\end{equation}
then 
\[
(\lambda +|\xi|^2)\hat{u}=\hat{f}.
\]
Equivalently, we have
\[
\hat{u}=\frac{\hat{f}}{\lambda +|\xi|^2}.
\]
Therefore, \eqref{res} admits a unique solution $u=(\lambda-A)^{-1}f\in B^2=D(A)$ and
\[
\|u\|_{B^0}\le \frac{1}{|\lambda|}\|f\|_{B^0}.
\]
The proof is complete.
\end{proof}

Next, consider the operator $ \mathbb{A} := -A+1$ with domain $ D(\mathbb{A}) = D(A) $. By Proposition \ref{pro1}, $ \mathbb{A} $ is a closed and densely defined operator such that $ (-\infty, 1[\subset \rho(\mathbb{A}) $, and the following estimate holds:  
\[
\sup_{t \geq 0} (1 + t) \| (t + \mathbb{A})^{-1} \|_{\mathscr{B}(B^0)} \le 1.  
\]  
Thus, $ \mathbb{A} $ is a positive operator in the sense of \cite[Definition 4.1]{Lu}. Then the fractional power of $\mathbb{A}$, $\mathbb{A}^\alpha $, $0<\alpha <1$, is well defined and is expressed as
\[
\mathbb{A}^\alpha f = \frac{\sin (\pi \alpha)}{\pi} \mathbb{A} \int_0^\infty \lambda^{-1 + \alpha} (\lambda + \mathbb{A})^{-1} f \, d\lambda, \quad f \in D(\mathbb{A}^\alpha).  
\]  
Furthermore, we have proved in \cite{CLT} that 
\begin{equation}\label{dom}
D(\mathbb{A}^\alpha)=B^{2\alpha},\quad 0\le \alpha \le 1,
\end{equation}
where $\mathbb{A}^0:=I$.

Note that, since $B^2$ is dense in $B^0$, the analytic semigroup generated by $A$, i.e. $e^{tA}$,  is strongly continuous.
We can proceed as in the proof of \cite[Proposition 2.1.1]{Lun} to establish that $e^{-t\mathbb{A}}$  is bounded:
\[
\sup_{t\ge 0}\|e^{-t\mathbb{A}}\|_{\mathscr{B}(B^0)}<\infty.
\]

\subsection{Intermediate spaces}\label{sb2.3}

For each $(\alpha ,p)\in N$, define
\[
D_{\mathbb{A}}(\alpha,p):=\left\{f\in B^0;\; t\mapsto w_f(t):=\|t^{1-\alpha-\frac{1}{p}}\mathbb{A}e^{-t\mathbb{A}}f\|_{B^0}\in L^p((0,1))\right\}. 
\]
The norm on $D_{\mathbb{A}}(\alpha,p)$ is typically given by
\[
\|f\|_{D_{\mathbb{A}}(\alpha,p)}=\|f\|_{B^0}+\|w_f\|_{L^p((0,1))},\quad f\in D_{\mathbb{A}}(\alpha,p).
\]
Alternatively, in view of \cite[Proposition 2.2.4]{Lun}, the following equivalent characterization holds:
\[
D_{\mathbb{A}}(\alpha,p):=\left\{f\in B^0;\; t\mapsto t^{-\alpha-\frac{1}{p}}\|e^{-t\mathbb{A}}f-f\|_{B^0}\in L^p((0,1))\right\}.
\]
The space $D_{\mathbb{A}}(\alpha,p)$ serves as an intermediate space between $B^0$ and $D(\mathbb{A})=B^2$. It was shown in \cite[Proposition 2.2.2]{Lun} that
\begin{equation}\label{is1}
D_{\mathbb{A}}(\alpha,p)=(B^0,B^2)_{\alpha,p}.
\end{equation}

For $0<\alpha<1$, let $J_\alpha$ denote the collection of spaces $X_\alpha$ such that $B^0\hookrightarrow X_\alpha \hookrightarrow B^2$ and, for some constant $c>0$, 
\[
\|f\|_{X_\alpha}\le c\| f\|_{B^0}^{1-\alpha}\|f\|_{B^2}^\alpha ,\quad f\in B^2.
\]
By Theorem \ref{thm1} (i), $B^{2\alpha}\in J_\alpha$. On the other hand, combining \eqref{intr3} and \eqref{is1} yields $D_{\mathbb{A}}(\alpha,p)\in J_\alpha$ for all $(\alpha,p)\in N$. Then the following embedding property holds.

\begin{theorem}\label{thm2}
Let $0<\beta <\alpha<\gamma <1$. Then
\begin{equation}\label{isi}
B^{2\gamma}\hookrightarrow D_{\mathbb{A}}(\alpha,1)\hookrightarrow D_{\mathbb{A}}(\alpha,\infty)\hookrightarrow B^{2\beta} .
\end{equation}
\end{theorem}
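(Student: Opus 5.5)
The chain \eqref{isi} is obtained by identifying $D_{\mathbb{A}}(\alpha,p)$ with the real interpolation spaces via \eqref{is1}, and then combining Theorem \ref{thm1} (vii) with the embeddings \eqref{intr1}--\eqref{intr2}. The middle embedding is immediate: by \eqref{is1}, $D_{\mathbb{A}}(\alpha,1)=(B^0,B^2)_{\alpha,1}$ and $D_{\mathbb{A}}(\alpha,\infty)=(B^0,B^2)_{\alpha,\infty}$, and \eqref{intr1} with $p_1=1$, $p_2=\infty$ gives $(B^0,B^2)_{\alpha,1}\hookrightarrow(B^0,B^2)_{\alpha,\infty}$.

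For the left embedding we apply Theorem \ref{thm1} (vii) with $r=0$, $s=2\gamma$, $t=2$, so that $\theta=\gamma$. Since $\alpha<\gamma$, we may take $\tilde\theta=\alpha$ and obtain
\[
B^{2\gamma}\hookrightarrow (B^0,B^2)_{\alpha,1}=D_{\mathbb{A}}(\alpha,1).
\]

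For the right embedding we choose an intermediate exponent $\beta<\beta'<\alpha$. By \eqref{intr2} with $\theta_1=\beta'<\theta_2=\alpha$, we have $(B^0,B^2)_{\alpha,\infty}\hookrightarrow(B^0,B^2)_{\beta',1}$. Theorem \ref{thm1} (vii) with $r=0$, $s=2\beta'$, $t=2$ (so $\theta=\beta'$) gives $(B^0,B^2)_{\beta',1}\hookrightarrow B^{2\beta'}$. Finally, the trivial embedding $B^{2\beta'}\hookrightarrow B^{2\beta}$ yields $D_{\mathbb{A}}(\alpha,\infty)\hookrightarrow B^{2\beta}$. Any $\beta'\in[\beta,\alpha)$ works, including $\beta'=\beta$; the intermediate choice simply leaves room.

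All constants are inherited from the cited embeddings, so every embedding in the chain is continuous. I do not expect a genuine obstacle. The only care needed is to check that the strict inequalities $\beta<\alpha<\gamma$ are exactly what (vii) and \eqref{intr2} require. One should also confirm that \eqref{is1} holds for $p=1$ and $p=\infty$, which Lunardi's Proposition 2.2.2 covers.
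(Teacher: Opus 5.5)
Your proposal is correct and is the paper's proof. The paper also identifies $D_{\mathbb{A}}(\alpha,p)$ with $(B^0,B^2)_{\alpha,p}$ via \eqref{is1}. It then gets the left embedding from Theorem \ref{thm1} (vii) with $\tilde\theta=\alpha<\gamma$, the middle one from \eqref{intr1}, and the right one from \eqref{intr2} followed by Theorem \ref{thm1} (vii). The only difference is your auxiliary exponent $\beta'$, which is unnecessary: the paper works with $\beta$ directly, as you yourself note is possible.
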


\begin{proof}
In what follows, we use the fact that $D(\mathbb{A}^\eta)=B^{2\eta}$ for $0<\eta\le 1$. Since $D_{\mathbb{A}}(\alpha,p)=(B^0,B^2)_{\alpha,p}$ for all $(\alpha,p)\in N$ by \eqref{is1}, it follows from \eqref{intr2} that
\[
D_{\mathbb{A}}(\alpha,\infty)\hookrightarrow (B^0,B^2)_{\beta ,1} .
\]
On the other hand, Theorem \ref{thm1} (vii) yields
$B^{2\gamma}\hookrightarrow (B^0,B^2)_{\alpha,1}=D_{\mathbb{A}}(\alpha,1)$ and   $(B^0,B^2)_{\beta ,1}\hookrightarrow B^{2\beta}$. Hence,
\[
B^{2\gamma}\hookrightarrow D_{\mathbb{A}}(\alpha,1)\hookrightarrow D_{\mathbb{A}}(\alpha,\infty)\hookrightarrow B^{2\beta}.
\]
This completes the proof.
\end{proof}

From \cite[Corollary 2.2.3]{Lun}, we have the following lemma.

\begin{lemma}\label{lem2}
Let $E$ be a Banach space satisfying $B^2\hookrightarrow E\hookrightarrow B^0$. Then, $E\in J_\alpha$ if and only if $D_{\mathbb{A}}(\alpha,1)\hookrightarrow E$.
\end{lemma}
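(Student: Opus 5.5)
Lemma \ref{lem2} is the specialization of \cite[Corollary 2.2.3]{Lun} to the pair $(B^0,D(\mathbb{A}))=(B^0,B^2)$. Still, I would give a direct argument based on the characterization \eqref{is1} of $D_{\mathbb{A}}(\alpha,1)$ as $(B^0,B^2)_{\alpha,1}$, treating the two implications separately. Throughout, $E$ is a Banach space with $B^2\hookrightarrow E\hookrightarrow B^0$, the interpolation-inequality condition defining $J_\alpha$ is required for $f\in B^2$, and $0<\alpha<1$ is fixed.

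\emph{($\Leftarrow$)} Assume $D_{\mathbb{A}}(\alpha,1)\hookrightarrow E$, so $\|f\|_E\le C\|f\|_{D_{\mathbb{A}}(\alpha,1)}$. For $f\in B^2$, combining \eqref{is1} with the interpolation inequality \eqref{intr3} for $(\theta,p)=(\alpha,1)$ gives
\[
\|f\|_E\le C\|f\|_{(B^0,B^2)_{\alpha,1}}\le Cc\,\|f\|_{B^0}^{1-\alpha}\|f\|_{B^2}^\alpha .
\]
This is exactly the condition for $E\in J_\alpha$.

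\emph{($\Rightarrow$)} Assume $\|f\|_E\le c\|f\|_{B^0}^{1-\alpha}\|f\|_{B^2}^\alpha$ for $f\in B^2$. I would use the discrete $J$-method representation of $(B^0,B^2)_{\alpha,1}$ (Lions–Peetre equivalence of the $K$- and $J$-methods, \cite{Lu}). Every $f\in(B^0,B^2)_{\alpha,1}$ can be written as $f=\sum_{k\in\mathbb{Z}}u_k$ with $u_k\in B^2$, the series converging in $B^0$, and
\[
\sum_k 2^{-k\alpha}\max\{\|u_k\|_{B^0},2^k\|u_k\|_{B^2}\}\le C\|f\|_{(B^0,B^2)_{\alpha,1}}.
\]
For each $k$, the interpolation-inequality condition gives
\[
\|u_k\|_E\le c\|u_k\|_{B^0}^{1-\alpha}\|u_k\|_{B^2}^\alpha\le c\,2^{-k\alpha}\max\{\|u_k\|_{B^0},2^k\|u_k\|_{B^2}\}.
\]
Hence $\sum_k u_k$ converges absolutely in $E$.

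Since $E\hookrightarrow B^0$, its $E$-limit coincides with the $B^0$-limit $f$. Therefore $f\in E$ and $\|f\|_E\le cC\|f\|_{(B^0,B^2)_{\alpha,1}}$. By \eqref{is1}, this is the embedding $D_{\mathbb{A}}(\alpha,1)\hookrightarrow E$.

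The only delicate step is the $J$-representation used in ($\Rightarrow$). Should we wish to avoid the $J$-method, I would instead follow Lunardi's semigroup route:
\[
f=\int_0^1 \mathbb{A}e^{-t\mathbb{A}}f\,dt+e^{-\mathbb{A}}f.
\]
The integrand is then estimated in $E$ via the interpolation-inequality condition, using
\[
\|\mathbb{A}e^{-t\mathbb{A}}f\|_{B^2}\lesssim t^{-1}\|\mathbb{A}e^{-t\mathbb{A}}f\|_{B^0}
\]
together with the uniform boundedness of $e^{-t\mathbb{A}}$. This yields $\|\mathbb{A}e^{-t\mathbb{A}}f\|_E\lesssim t^{-\alpha}\|\mathbb{A}e^{-t\mathbb{A}}f\|_{B^0}$, which is integrable on $(0,1)$ precisely by the definition of $D_{\mathbb{A}}(\alpha,1)$. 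Either route reduces the lemma to \cite[Corollary 2.2.3]{Lun}.
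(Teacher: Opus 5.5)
Your proof is correct and is essentially an unpacking of \cite[Corollary 2.2.3]{Lun}, which is all the paper invokes: $(\Leftarrow)$ follows from \eqref{is1} and \eqref{intr3}, and your discrete $J$-representation argument for $(\Rightarrow)$ is a standard proof of the abstract characterization of the class $J_\alpha$ that underlies that corollary. In your fallback semigroup route there is a slip: $\|\mathbb{A}e^{-t\mathbb{A}}f\|_{B^2}\lesssim t^{-1}\|\mathbb{A}e^{-t\mathbb{A}}f\|_{B^0}$ fails when $\hat f$ is concentrated at frequencies with $t\langle\xi\rangle^2\gg 1$, so you should use $\|\mathbb{A}e^{-t\mathbb{A}}f\|_{B^2}\lesssim t^{-1}\|\mathbb{A}e^{-(t/2)\mathbb{A}}f\|_{B^0}$ instead, which gives $\|\mathbb{A}e^{-t\mathbb{A}}f\|_E\lesssim t^{-\alpha}\|\mathbb{A}e^{-(t/2)\mathbb{A}}f\|_{B^0}$ and hence integrability on $(0,1)$ after the substitution $s=t/2$.
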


\begin{remark}\label{rem2}
{\rm
In light of \eqref{is1} and Remark \ref{rem1}, we have
\[
\left[D_{\mathbb{A}}(\alpha, p)\right]'=(\tilde{B}^{-2},\tilde{B}^0)_{\alpha,p'},\quad 0<\alpha <1,\; 1\le p<\infty.
\]
}
\end{remark}

\subsection{Some other function spaces}\label{sb2.4}

Let $X$ be an arbitrary Banach space, whose norm is denoted by $\|\cdot \|$, and let $I$ be an interval of $\mathbb{R}$ (possibly unbounded). For $\alpha\in (0,1]$ and $f:I\rightarrow X$, let
\[
[f]_\alpha:=\sup\left\{ \frac{\|f(t)-f(s)\|}{|t-s|^\alpha};\; t,s\in I,\; t\ne s\right\}.
\]
Assume that $\alpha\in (0,1)$. Then, define
\[
C^\alpha (I,X)=\{f\in C_b^0(I,X);\; [f]_\alpha<\infty\},
\]
which is a Banach space for the norm
\[
\|f\|_{C^\alpha (I,X)}=\|f\|_{C_b^0(I,X)}+[f]_\alpha,\quad f\in C^\alpha (I,X).
\]
For all $k\in \mathbb{N}$, let
\[
C^{k+\alpha}(I,X):=\{f\in C_b^k(I,X);\; f^{(k)}\in C^\alpha (I,X)\}.
\]
This space is complete when endowed with its natural norm, given by
\[
\|f\|_{C^{k+\alpha}(I,X)}=\|f\|_{C_b^k(I,X)}+\left[f^{(k)}\right]_\alpha.
\]
For the reader's convenience, we retain the notations from \cite{Lun}, although they differ slightly from those used for scalar functions.

Also, define
\[
\mathrm{Lip}(I,X):=\left\{f\in C^0_b(I,X);\; [f]_1<\infty\right\}.
\]
$\mathrm{Lip}(I,X)$ is a Banach space for the norm
\[
\|f\|_{\mathrm{Lip}(I,X)}:=\|f\|_{C^0_b(I,X)}+[f]_1.
\]

Next, for $\alpha \in (0,2)$ and $f:I\rightarrow X$, let
\[
[\![ f]\!]_\alpha:=\sup \left\{\frac{\left\|f(s)-2f\left(\frac{s+t}{2}\right)+f(s)\right\|}{|t-s|^\alpha};\; s,t\in I,\; t\ne s \right\}
\]
and
\[
\tilde{C}^\alpha (I,X):=\{f\in C_b^0(I,X);\; [\![ f]\!]_\alpha<\infty\}.
\]
Equipped with the norm
\[
\|f\|_{\tilde{C}^\alpha (I,X)}:=\|f\|_{C_b^0(I,X)}+[\![ f]\!]_\alpha,\quad f\in \tilde{C}^\alpha (I,X),
\]
$\tilde{C}^\alpha (I,X)$ is a Banach space.

Note that, by \cite[Proposition 0.2.2]{Lun}, we have
\begin{equation}\label{hoe}
\tilde{C}^\alpha (I,X)=C^\alpha (I,X),\quad 0<\alpha<2,\; \alpha \ne 1.
\end{equation}

The following lemma will be used later. It follows from \cite[Proposition 1.1.5 and Proposition 2.2.12]{Lun}.

\begin{lemma}\label{lem1}
Let $I$ be an interval of $\mathbb{R}$, let $\alpha, \theta\in (0,1)$, and let $X_\alpha \in J_\alpha$. Then
\[
C^\theta (I,B^2)\cap C^{1+\theta}(I,B^0)\hookrightarrow \tilde{C}^{\theta+1-\alpha}(I,X_\alpha).
\]
In particular, for all $p\ge 1$, we have
\[
C^\alpha (I,B^2)\cap C^{1+\alpha}(I,B^0)\hookrightarrow \tilde{C}^1(I,D_{\mathbb{A}}(\alpha,p)).
\]
For the case $p=\infty$, we have
\[
C^\alpha (I,B^2)\cap C^{1+\alpha}(I,B^0)\hookrightarrow \mathrm{Lip}(I,D_{\mathbb{A}}(\alpha,\infty)).
\]
Furthermore, if $f\in C^\alpha (I,B^2)\cap C^{1+\alpha}(I,B^0)$, then $f'(t)\in D_{\mathbb{A}}(\alpha,\infty)$ for all $t\in I$.
\end{lemma}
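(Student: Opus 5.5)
The plan is to reduce Lemma \ref{lem1} to the abstract results of Lunardi, with the spaces fixed as $X=B^0$, $D(\mathbb{A})=B^2$ and the intermediate space $X_\alpha\in J_\alpha$. We write $\Delta_h f(t)=f(t+h)-2f(t+\frac h2)+f(t)$, with $h=t-s$, for the symmetric second difference, and we assume $|h|\le1$; for $|h|>1$ the bound follows trivially from boundedness.

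\textbf{Main inclusion.} Let $f\in C^\theta(I,B^2)\cap C^{1+\theta}(I,B^0)$. By the defining inequality of $J_\alpha$,
\[
\|\Delta_h f(t)\|_{X_\alpha}\le c\|\Delta_h f(t)\|_{B^0}^{1-\alpha}\|\Delta_h f(t)\|_{B^2}^{\alpha}.
\]
We bound the two factors separately.

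\begin{itemize}
\item In $B^0$: since $f'\in C^\theta(I,B^0)$, write $\Delta_h f(t)=\int_0^{h/2}\bigl(f'(t+\frac h2+\sigma)-f'(t+\sigma)\bigr)d\sigma$. This gives $\|\Delta_h f(t)\|_{B^0}\le C|h|^{1+\theta}[f']_\theta$.
\item In $B^2$: the H\"older continuity alone gives $\|\Delta_h f(t)\|_{B^2}\le C|h|^{\theta}[f]_\theta$.
\end{itemize}

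Combining the two bounds yields $\|\Delta_h f(t)\|_{X_\alpha}\le C|h|^{(1+\theta)(1-\alpha)+\theta\alpha}=C|h|^{\theta+1-\alpha}$. The sup norm in $X_\alpha$ is controlled in the same way by $\|f\|_{B^0}^{1-\alpha}\|f\|_{B^2}^\alpha$. This proves the embedding into $\tilde C^{\theta+1-\alpha}(I,X_\alpha)$; it is essentially \cite[Proposition 1.1.5]{Lun}.

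\textbf{Special cases.} Take $\theta=\alpha$ and $X_\alpha=D_{\mathbb{A}}(\alpha,p)$. This space lies in $J_\alpha$ by \eqref{intr3} and \eqref{is1}, so the general inclusion gives the $\tilde C^1$ statement.

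\textbf{The case $p=\infty$.} This is the main obstacle, because $\tilde C^1$ is a Zygmund-type space and is strictly larger than $\mathrm{Lip}$. Here I would invoke \cite[Proposition 2.2.12]{Lun} directly. The idea is to show that $f'(t)\in D_{\mathbb{A}}(\alpha,\infty)$ with a uniform bound, and then integrate $f(t)-f(s)=\int_s^t f'$ in $D_{\mathbb{A}}(\alpha,\infty)$.

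To show $f'(t)\in D_{\mathbb{A}}(\alpha,\infty)$, use the characterization via the $K$-functional, or equivalently via $\|e^{-\tau\mathbb{A}}g-g\|_{B^0}\le C\tau^\alpha$. Split $f'(t)=\frac{f(t+\tau)-f(t)}{\tau}+\bigl(f'(t)-\frac{f(t+\tau)-f(t)}{\tau}\bigr)$.

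\begin{itemize}
\item The first piece has $B^2$ norm at most $C\tau^{\alpha-1}$, by the H\"older continuity of $f$ in $B^2$.
\item The second piece has $B^0$ norm at most $C\tau^\alpha$, by the H\"older continuity of $f'$ in $B^0$.
\end{itemize}

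Hence $K(\tau,f'(t))\le C\tau^\alpha$ for $\tau\le1$. For $\tau>1$ the bound is trivial from $\|f'(t)\|_{B^0}$; if $t$ sits at the right end of $I$, one uses $t-\tau$ instead. This gives $\sup_t\|f'(t)\|_{(B^0,B^2)_{\alpha,\infty}}<\infty$, which is the final assertion, and by \eqref{is1} this space is $D_{\mathbb{A}}(\alpha,\infty)$.

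\textbf{Remaining detail.} We still need $f'$ to be Bochner integrable as a $D_{\mathbb{A}}(\alpha,\infty)$-valued map, or else we bound $K(\tau,f(t)-f(s))$ directly by the same splitting. I would prefer the direct route. Take $g=f(t)-f(s)$ and split it by comparing with $\tau^{-1}\int$-averages, choosing $\tau$ according to $|t-s|$. This yields $K(\tau,g)\le C\tau^\alpha|t-s|$, which gives the Lipschitz bound without any measurability issue.
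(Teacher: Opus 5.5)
Your proposal is correct and follows the paper's route. The paper proves the lemma only by citing \cite[Propositions 1.1.5 and 2.2.12]{Lun}, and your two arguments are exactly the ones behind those results: the $J_\alpha$ interpolation estimate on symmetric second differences, with exponent $(1+\theta)(1-\alpha)+\theta\alpha=\theta+1-\alpha$, and the $K$-functional splitting of $f'(t)$ and of $f(t)-f(s)$ into a difference quotient in $B^2$ plus a remainder in $B^0$. Two small adjustments: on a short bounded interval the ``trivial'' regime for $K(\tau,\cdot)$ should be $\tau\ge |I|/2$ rather than $\tau>1$, and continuity of $f$ with values in $X_\alpha$ follows from the same $J_\alpha$ inequality applied to first differences.
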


\section{Heat equations in $B^0$}\label{s3}

Our objective is to study the existence and uniqueness of solutions to the Cauchy problem for the heat equation of the following form:
\begin{equation}\label{heat1}
\left\{
\begin{array}{ll}
\partial_tu(t,x)-\Delta u(t,x) =q(t,x)+a(t,u(t,x))\quad \mbox{in}\; (0,\tau)\times \mathbb{R}^n,
\\
u(0,\cdot)=u_0,
\end{array}
\right.
\end{equation}
where $\tau\in (0,\infty)$. This Cauchy problem encompasses both linear and semilinear cases. Formally, $u$ is a solution to \eqref{heat1} if and only if $v(t,\cdot)=e^{-t}u(t,\cdot)$ is a solution to the following Cauchy problem:
\begin{equation}\label{heat2}
\left\{
\begin{array}{ll}
\partial_tv(t,x)+v(t,x)-\Delta v(t,x)
\\
\hskip 3cm
=e^{-t}q(t,x)+e^{-t}a(t,e^tv(t,x))\quad \mbox{in}\; (0,\tau)\times \mathbb{R}^n,
\\
v(0,\cdot)=u_0.
\end{array}
\right.
\end{equation}

Noting that $u$ and $v$ share the same regularity, it suffices to consider \eqref{heat2}. By identifying $v(t,\cdot)$ with $v(t)$, we are led to study the following abstract Cauchy problem in the space $B^0$:
\begin{equation}\label{aeq0}
\left\{
\begin{array}{ll}
v'(t)+\mathbb{A} v(t) =F(t,v(t)),\quad t\in (0,\tau),
\\
v(0)=u_0.
\end{array}
\right.
\end{equation}

\subsection{Linear heat equations}\label{sb3.1}

Let $0<\tau<\infty$ be fixed arbitrarily. Define the operator $\mathscr{C}$ by
\[
\mathscr{C}: f\in L^\infty ((0,\tau),B^0)\mapsto \mathscr{C}(f)(t)=(e^{-t\mathbb{A}}\ast f)(t):=\int_0^te^{-(t-s)\mathbb{A}}f(s)ds,
\]
where we recall that $e^{-t\mathbb{A}}$ is the semigroup generated by $\mathbb{A}$. For a more extensive discussion on semigroup theory for PDEs, we refer to \cite{Lun,Pazy}.

Applying \cite[Proposition 4.2.1]{Lun} yields the following result.

\begin{proposition}\label{pro2}
For all $\alpha \in (0,1)$, it holds \[\mathscr{C}\in \mathscr{B}(L^\infty((0,\tau),B^0),C^{1-\alpha}([0,\tau],D_{\mathbb{A}}(\alpha,1))).\]
\end{proposition}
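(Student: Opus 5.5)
The plan is to prove the statement directly from the analytic semigroup estimates for $\mathbb{A}$, following the strategy of \cite[Proposition 4.2.1]{Lun}. Two ingredients from the preliminaries are used. First, $\mathbb{A}=1-A$ is sectorial by Proposition \ref{pro1}, and $e^{-t\mathbb{A}}$ is bounded. Second, since $\rho(\mathbb{A})$ contains a half-plane to the left of the spectrum, there are constants with
\[
\|e^{-t\mathbb{A}}\|_{\mathscr{B}(B^0)}\le M_0,\qquad \|t\mathbb{A}e^{-t\mathbb{A}}\|_{\mathscr{B}(B^0)}\le M_1,\qquad t>0.
\]
From these I will derive bounds on $\|\mathscr{C}(f)(t)\|_{D_{\mathbb{A}}(\alpha,1)}$ and on $\|\mathscr{C}(f)(t)-\mathscr{C}(f)(s)\|_{D_{\mathbb{A}}(\alpha,1)}$. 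Throughout, the $D_{\mathbb{A}}(\alpha,1)$ norm is the one given by the weight $w$, namely $\|g\|_{B^0}+\int_0^1 \sigma^{-\alpha}\|\mathbb{A}e^{-\sigma\mathbb{A}}g\|_{B^0}\,d\sigma$.

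\textbf{Step 1 (uniform bound).} For $g=\mathscr{C}(f)(t)$ the $B^0$ part is trivially at most $M_0\tau\|f\|_\infty$. For the weighted part I write
\[
\mathbb{A}e^{-\sigma\mathbb{A}}g=\int_0^t \mathbb{A}e^{-(t-s+\sigma)\mathbb{A}}f(s)\,ds ,
\]
whose norm is at most $M_1\|f\|_\infty\log\frac{t+\sigma}{\sigma}$. Since $\int_0^1\sigma^{-\alpha}\log(1+t/\sigma)\,d\sigma\le C_\alpha t^{1-\alpha}$, for instance by using $\log(1+x)\le c_\alpha x^{1-\alpha}$, this gives $\|\mathscr{C}(f)(t)\|_{D_{\mathbb{A}}(\alpha,1)}\le C\|f\|_\infty$.

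\textbf{Step 2 (Hölder estimate).} For $0\le s<t\le\tau$ I split
\[
\mathscr{C}(f)(t)-\mathscr{C}(f)(s)=\int_s^t e^{-(t-r)\mathbb{A}}f(r)\,dr+\left(e^{-(t-s)\mathbb{A}}-I\right)\mathscr{C}(f)(s)=:I_1+I_2 .
\]
The term $I_1$ is treated exactly as in Step 1 on an interval of length $h=t-s$, which gives $\|I_1\|_{D_{\mathbb{A}}(\alpha,1)}\le Ch^{1-\alpha}\|f\|_\infty$.

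The term $I_2$ is the delicate one. I would write
\[
I_2=-\int_0^h \mathbb{A}e^{-\eta\mathbb{A}}\,\mathscr{C}(f)(s)\,d\eta
\]
and then estimate
\[
\mathbb{A}e^{-\sigma\mathbb{A}}I_2=-\int_0^h\int_0^s \mathbb{A}^2e^{-(\eta+\sigma+s-r)\mathbb{A}}f(r)\,dr\,d\eta .
\]
Using $\|\mathbb{A}^2e^{-t\mathbb{A}}\|\le M_2t^{-2}$, the inner integral is bounded by $M_2\|f\|_\infty(\eta+\sigma)^{-1}$. Hence
\[
\|\mathbb{A}e^{-\sigma\mathbb{A}}I_2\|_{B^0}\le C\|f\|_\infty\log\frac{h+\sigma}{\sigma},
\]
and the same integral as in Step 1 yields $Ch^{1-\alpha}\|f\|_\infty$. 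The $B^0$ parts of both $I_1$ and $I_2$ are $O(h)\le Ch^{1-\alpha}$, using $\|\mathscr{C}(f)(s)\|_{D_{\mathbb{A}}(\alpha,1)}$ bounded for $I_2$ via the characterization $\|e^{-h\mathbb{A}}g-g\|\le Ch^{\alpha}\|g\|_{D_{\mathbb{A}}(\alpha,\infty)}$, or more simply by the direct $\eta$-integral bound.

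\textbf{Step 3 (continuity and conclusion).} Continuity of $\mathscr{C}(f)$ into $D_{\mathbb{A}}(\alpha,1)$ follows from Step 2, and the bound on $[\mathscr{C}(f)]_{1-\alpha}$ together with Step 1 gives membership in $C^{1-\alpha}([0,\tau],D_{\mathbb{A}}(\alpha,1))$. All constants depend only on $\alpha$, $\tau$ and the $M_i$, and $\mathscr{C}$ is linear, so it is bounded. The main obstacle is getting the logarithmic kernel estimates in Steps 1 and 2 right, so that the $D_{\mathbb{A}}(\alpha,1)$ integrability in $\sigma$ holds with the correct power of $h$. Alternatively, the whole statement is exactly \cite[Proposition 4.2.1]{Lun} applied to the sectorial operator $-\mathbb{A}$ on $X=B^0$, which is legitimate by Proposition \ref{pro1}; I would cite this and present the above as its verification.
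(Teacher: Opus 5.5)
Your proposal is correct and matches the paper, whose entire proof is the invocation of \cite[Proposition 4.2.1]{Lun} for the sectorial operator $-\mathbb{A}$ on $B^0$. Your Steps 1--3 are a valid direct verification of that result, with two fixable slips.

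\begin{itemize}
\item The suggested bound $\log(1+x)\le c_\alpha x^{1-\alpha}$ does not work: it turns $\sigma^{-\alpha}\log(1+t/\sigma)$ into $t^{1-\alpha}\sigma^{-1}$, which is not integrable at $0$. Instead, substitute $\sigma=tu$ and use $\int_0^\infty u^{-\alpha}\log(1+u^{-1})\,du<\infty$.
\item For $\|I_2\|_{B^0}$, your $\eta$-integral gives $O(h|\log h|)$ rather than $O(h)$, which still suffices. The route through $D_{\mathbb{A}}(\alpha,\infty)$ only gives $h^{\alpha}$, which is too weak for $\alpha<\frac12$. Either keep the $\eta$-integral, or use $D_{\mathbb{A}}(1-\alpha,\infty)$ there, which Step 1 with $1-\alpha$ in place of $\alpha$ controls.
\end{itemize}
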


Proposition \ref{pro2}, combined with Lemma \ref{lem2}, gives the following corollary.

\begin{corollary}\label{cor1}
For all $\alpha \in (0,1)$ and $X_\alpha \in J_\alpha$, it holds \[\mathscr{C}\in \mathscr{B}(L^\infty((0,\tau),B^0),C^{1-\alpha}([0,\tau],X_\alpha)).\]
In particular, by Theorem \ref{thm1} {\rm (vi)}, 
\[\mathscr{C}\in \mathscr{B}(L^\infty((0,\tau),C_b^0),C^{1-\alpha}([0,\tau],C_b^{\lfloor 2\alpha \rfloor, 2\alpha -\lfloor 2\alpha \rfloor})\;  \mathrm{if}\;  \alpha\ne \frac{1}{2} \]
and
\[\mathscr{C}\in \mathscr{B}(L^\infty(0,\tau),C_b^0),C^{\frac{1}{2}}([0,\tau],C_b^1) .\]
\end{corollary}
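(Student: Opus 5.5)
The plan is to read the first assertion as a direct composition of Proposition \ref{pro2} with Lemma \ref{lem2}, and to obtain the two particular cases by choosing specific spaces in \(J_\alpha\), or more precisely by refining the embedding of \(D_{\mathbb{A}}(\alpha,1)\) into Hölder spaces.

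\textbf{First assertion.} Fix \(\alpha\in(0,1)\) and \(X_\alpha\in J_\alpha\). In the definition of \(J_\alpha\) the inclusions should be read as \(B^2\hookrightarrow X_\alpha\hookrightarrow B^0\), since this is the setting of Lemma \ref{lem2}. Lemma \ref{lem2} then gives a continuous embedding \(D_{\mathbb{A}}(\alpha,1)\hookrightarrow X_\alpha\), say with constant \(c\). A continuous embedding \(E\hookrightarrow F\) induces a continuous embedding \(C^{1-\alpha}([0,\tau],E)\hookrightarrow C^{1-\alpha}([0,\tau],F)\): both the sup norm and the seminorm \([\cdot]_{1-\alpha}\) are multiplied by at most \(c\). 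Composing with the bounded operator \(\mathscr{C}\) of Proposition \ref{pro2} gives
\[
\mathscr{C}\in \mathscr{B}\bigl(L^\infty((0,\tau),B^0),C^{1-\alpha}([0,\tau],X_\alpha)\bigr).
\]

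\textbf{Particular cases: reduction.} Here I read the source space as \(L^\infty((0,\tau),B^0)\), since \(\mathscr{C}\) is only defined there; "\(C_b^0\)" appears to stand for \(B^0\hookrightarrow C_b^0\). Given the first assertion, it suffices to place a Hölder space as a target of \(D_{\mathbb{A}}(\alpha,1)\).

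\textbf{Particular cases: the naive route and its defect.} One could take \(X_\alpha=B^{2\alpha}\in J_\alpha\) (by Theorem \ref{thm1} (i)) and apply Theorem \ref{thm1} (vi). This only yields \(C_b^{k,\gamma}\) with \(\gamma<2\alpha-k\), which falls short of the sharp exponent in the statement.

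\textbf{Particular cases: interpolating the Hölder scales.} Instead I will interpolate directly. By Theorem \ref{thm1} (v), \(B^0\hookrightarrow C_b^0\) and \(B^2\hookrightarrow C_b^2\). Real interpolation is functorial, and \eqref{is1} identifies \(D_{\mathbb{A}}(\alpha,1)=(B^0,B^2)_{\alpha,1}\). Together with \(\ell^1\subset\ell^\infty\) in the \(K\)-method, this gives
\[
D_{\mathbb{A}}(\alpha,1)\hookrightarrow (C_b^0,C_b^2)_{\alpha,1}\hookrightarrow (C_b^0,C_b^2)_{\alpha,\infty}.
\]
By the classical characterization recalled in \cite{Lun}, \((C_b^0,C_b^2)_{\alpha,\infty}\) is the Hölder--Zygmund space of order \(2\alpha\). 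For \(2\alpha\notin\mathbb{N}\), i.e. \(\alpha\neq\frac12\), this space coincides with \(C_b^{\lfloor 2\alpha\rfloor,2\alpha-\lfloor 2\alpha\rfloor}\), which gives the first particular case.

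\textbf{The case \(\alpha=\frac12\).} The \(\infty\)-space is now only the Zygmund class, which is not contained in \(C_b^1\). So the \(p=1\) index must be kept: I will invoke the known embedding \((C_b^0,C_b^2)_{1/2,1}\hookrightarrow C_b^1\) from \cite{Lun}. This follows from the interpolation inequality \(\|f\|_{C_b^1}\le c\|f\|_{C_b^0}^{1/2}\|f\|_{C_b^2}^{1/2}\) combined with the analogue of Lemma \ref{lem2} (\cite[Corollary 1.2.7]{Lun}). This yields \(\mathscr{C}\in\mathscr{B}(L^\infty((0,\tau),B^0),C^{1/2}([0,\tau],C_b^1))\).

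\textbf{Main obstacle.} The delicate step is this sharp identification of the interpolation spaces \((C_b^0,C_b^2)_{\alpha,p}\), especially the borderline case \(\alpha=\frac12\). If these facts cannot be cited cleanly, I would fall back on proving the Hölder estimate by hand. The plan there is to split \(\hat f\) into \(|\xi|\le R\) and \(|\xi|>R\) and to use the \(K\)-functional decomposition \(f=g+h\) with \(g\in B^0\), \(h\in B^2\). One then optimizes in \(R\sim|x-y|^{-1}\), so that the \(L^1\)-integrability in \(\rho\) (the index \(p=1\)) is exactly what absorbs the logarithmic loss at the endpoint.
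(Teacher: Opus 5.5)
Your proof is correct. For the first assertion it is exactly the paper's argument: Proposition \ref{pro2} composed with the embedding $D_{\mathbb{A}}(\alpha,1)\hookrightarrow X_\alpha$ from Lemma \ref{lem2}. You are also right to read the definition of $J_\alpha$ as $B^2\hookrightarrow X_\alpha\hookrightarrow B^0$, and the source space in the particular cases as $L^\infty((0,\tau),B^0)$, since that is where $\mathscr{C}$ is defined.

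For the particular cases the routes differ.
\begin{itemize}
\item The paper simply takes $X_\alpha=B^{2\alpha}\in J_\alpha$ and invokes the Barron--H\"older embedding of Theorem \ref{thm1}.
\item You instead transport $D_{\mathbb{A}}(\alpha,1)=(B^0,B^2)_{\alpha,1}$ into $(C_b^0,C_b^2)_{\alpha,1}$ by functoriality. You then use the classical identifications $(C_b^0,C_b^2)_{\alpha,\infty}=C_b^{\lfloor 2\alpha\rfloor,2\alpha-\lfloor 2\alpha\rfloor}$ for $\alpha\ne\frac12$ and $(C_b^0,C_b^2)_{1/2,1}\hookrightarrow C_b^1$.
\end{itemize}
Your route is valid and gives the sharp exponents. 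Its cost is importing the interpolation theory of the H\"older--Zygmund scale. Its gain is showing that the conclusion needs only $B^0\hookrightarrow C_b^0$ and $B^2\hookrightarrow C_b^2$.

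Your objection to the ``naive'' route is accurate only for Theorem \ref{thm1} (vi) as literally stated. That result has the strict inequality $\gamma<2\alpha-\lfloor 2\alpha\rfloor$, so the paper's one-line justification is indeed imprecise. The route itself, however, is not defective.
\begin{itemize}
\item For $\alpha\ne\frac12$: since $|e^{ix\cdot\xi}-e^{iy\cdot\xi}|\le\min(2,|x-y||\xi|)\le 2^{1-\theta}|x-y|^\theta|\xi|^\theta$, you get for $|\beta|=k$ the bound $[\partial^\beta f]_\theta\le 2^{1-\theta}(2\pi)^{-n}\|f\|_{B^{k+\theta}}$. This is the sharp embedding $B^{k+\theta}\hookrightarrow C_b^{k,\theta}$.
\item For $\alpha=\frac12$: it suffices that $B^1\in J_{1/2}$ (Theorem \ref{thm1} (i)) together with $B^1\hookrightarrow C_b^1$ (Theorem \ref{thm1} (v)). The Zygmund-class subtlety you worry about never arises.
\end{itemize}
So the $B^{2\alpha}$ route, with this one-line Fourier estimate in place of (vi), is the more economical way to the sharp statement and stays inside the Barron framework.
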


The operator $\mathscr{C}$ can also be viewed as acting on $L^p((0,\tau),B^0)$, $1\le p<\infty$. The following theorem is contained in both \cite[Proposition 9.3.7]{Ha} and \cite[Proposition 6.7]{Lu}. Recall that the fractional Sobolev space $W^{\alpha,p}((0,\tau), B^0)$ is defined by
\[
W^{\alpha,p}((0,\tau), B^0):=(L^p((0,\tau),B^0),W^{1,p}((0,\tau),B^0))_{\alpha,p},\quad (\alpha,p)\in (0,1)\times (1,\infty).
\] 

\begin{theorem}\label{thm5}
For all $(\alpha,p)\in (0,1)\times (1,\infty)$, let $\mathscr{K}_{\alpha,p}:=L^p((0,\tau),D_{\mathbb{A}}(\alpha,p))\cap W^{\alpha,p}((0,\tau), B^0)$. Then it holds $\mathscr{C}\in \mathscr{B}(L^p((0,\tau),B^0),\mathscr{K}_{\alpha,p})$.
\end{theorem}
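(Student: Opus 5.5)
Since $\mathscr{C}$ is linear, it suffices to bound $\|\mathscr{C}f\|_{L^p((0,\tau),D_{\mathbb{A}}(\alpha,p))}$ and $\|\mathscr{C}f\|_{W^{\alpha,p}((0,\tau),B^0)}$ by a constant times $\|f\|_{L^p((0,\tau),B^0)}$. We use only that $\mathbb{A}$ is sectorial with $0\in\rho(\mathbb{A})$ and $\sup_{t\ge0}\|e^{-t\mathbb{A}}\|<\infty$ (Subsection \ref{sb2.2}). With these facts the analytic-semigroup estimates $\|\mathbb{A}^k e^{-t\mathbb{A}}\|_{\mathscr{B}(B^0)}\le c\,t^{-k}$ for $k=1,2$ hold on bounded intervals, with no growth in $t$. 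No special feature of $B^0$ is needed; this is the abstract maximal-regularity-type result of \cite{Ha,Lu}, and we reproduce its structure.

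\textbf{Space regularity.} We use the norm $\|g\|_{D_{\mathbb{A}}(\alpha,p)}\simeq\|g\|_{B^0}+\big(\int_0^1 r^{(1-\alpha)p-1}\|\mathbb{A}e^{-r\mathbb{A}}g\|_{B^0}^p\,dr\big)^{1/p}$. The $B^0$ part is immediate from the uniform bound on the semigroup and Young's inequality. For the other part,
\[
\mathbb{A}e^{-r\mathbb{A}}\mathscr{C}f(t)=\int_0^t \mathbb{A}e^{-(t-s+r)\mathbb{A}}f(s)\,ds ,
\]
so $\|\mathbb{A}e^{-r\mathbb{A}}\mathscr{C}f(t)\|\le c\int_0^t (t-s+r)^{-1}\|f(s)\|\,ds$. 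Raising to the power $p$, weighting by $r^{(1-\alpha)p-1}$ and integrating in $r\in(0,1)$ and $t\in(0,\tau)$ gives a kernel operator in the variables $(t,s,r)$. We control it by a Schur test/Hardy-type inequality: after the substitution $r=(t-s)\sigma$, the weight $\int_0^\infty \sigma^{(1-\alpha)p-1}(1+\sigma)^{-p}\,d\sigma$ is finite precisely because $0<\alpha<1$. The remaining convolution in $t-s$ is bounded on $L^p(0,\tau)$ via Minkowski's integral inequality applied to the $L^p((0,1),dr/r)$-valued function. We expect this step to be the main obstacle. If the direct Schur bookkeeping becomes messy, the fallback is to interpolate: by $\mathscr{C}\in\mathscr{B}(L^p(B^0),L^p(B^0))$ and the real-interpolation description \eqref{is1}, it suffices to prove the bound for the $K$-functional, splitting $\mathscr{C}f(t)=\int_{t-\rho}^t+\int_0^{t-\rho}$. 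The first piece is small in $B^0$, and the second lies in $B^2$ with norm $\lesssim\int_0^{t-\rho}(t-s)^{-1}\|f(s)\|\,ds$.

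\textbf{Time regularity.} For the $W^{\alpha,p}(B^0)$ bound we use the Gagliardo characterization of $(L^p,W^{1,p})_{\alpha,p}$:
\[
\int_0^\tau\!\!\int_0^{\tau-h} h^{-1-\alpha p}\|\mathscr{C}f(t+h)-\mathscr{C}f(t)\|^p\,dt\,dh .
\]
Write
\[
\mathscr{C}f(t+h)-\mathscr{C}f(t)=\int_t^{t+h}e^{-(t+h-s)\mathbb{A}}f(s)\,ds+(e^{-h\mathbb{A}}-I)\mathscr{C}f(t).
\]
The first term is bounded by $c\int_t^{t+h}\|f\|$, which is handled by a Hardy inequality since $\alpha<1$. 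For the second term, $\|(e^{-h\mathbb{A}}-I)g\|\le \int_0^h\|\mathbb{A}e^{-r\mathbb{A}}g\|\,dr$, so its contribution reduces, via Hardy's inequality again, to exactly the weighted quantity already bounded in the space-regularity step. Combining the two bounds yields $\mathscr{C}\in\mathscr{B}(L^p((0,\tau),B^0),\mathscr{K}_{\alpha,p})$.
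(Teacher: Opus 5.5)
Your proposal is correct and follows the same route as the paper. The paper gives no argument of its own: it simply invokes the abstract results \cite[Proposition 9.3.7]{Ha} and \cite[Proposition 6.7]{Lu} for sectorial operators, and your Minkowski--Hardy argument is a faithful self-contained version of that abstract proof, using only $\|\mathbb{A}e^{-t\mathbb{A}}\|_{\mathscr{B}(B^0)}\le c\,t^{-1}$ and the Gagliardo description of $W^{\alpha,p}((0,\tau),B^0)$. The step you flagged as the main obstacle is in fact immediate: Minkowski's inequality in $r$ turns the weighted kernel into $c_{\alpha,p}(t-s)^{-\alpha}$, which is integrable on $(0,\tau)$ since $\alpha<1$, so Young's inequality concludes and the $K$-functional fallback is not needed.
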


Next, let $\alpha \in (0,1)$, $p\in [1,\infty)$, and define $\mathscr{I}$ by
\[
\mathscr{I}: f\in D_{\mathbb{A}}(\alpha ,p)\mapsto \mathscr{I}(f)(t)=e^{-t\mathbb{A}}f,\; t\in [0,\tau].
\]

\begin{proposition}\label{pro3}
For all $f\in D_{\mathbb{A}}(\alpha ,p)$, we have $\mathscr{I}(f)\in C([0,\tau], D_{\mathbb{A}}(\alpha ,p))$. In particular, for all $X_\alpha \in J_\alpha$ and $f\in D_{\mathbb{A}}(\alpha ,1)$, we have $\mathscr{I}(f)\in C([0,\tau], X_\alpha)$.
\end{proposition}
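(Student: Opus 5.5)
The plan is to show first that $e^{-t\mathbb{A}}$ maps $D_{\mathbb{A}}(\alpha,p)$ into itself with uniformly bounded norm, and then to prove continuity in $t$ by a density argument. For the first point, commutation of $\mathbb{A}e^{-s\mathbb{A}}$ with $e^{-t\mathbb{A}}$ and the bound $M:=\sup_{t\ge0}\|e^{-t\mathbb{A}}\|_{\mathscr{B}(B^0)}<\infty$ from Subsection \ref{sb2.2} give
\[
w_{e^{-t\mathbb{A}}f}(s)=\|s^{1-\alpha-\frac1p}e^{-t\mathbb{A}}\mathbb{A}e^{-s\mathbb{A}}f\|_{B^0}\le M w_f(s).
\]
Hence $\|e^{-t\mathbb{A}}f\|_{D_{\mathbb{A}}(\alpha,p)}\le M\|f\|_{D_{\mathbb{A}}(\alpha,p)}$ for all $t\in[0,\tau]$.

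For continuity, I will exploit that $p<\infty$. The key claim is that $B^2=D(\mathbb{A})$ is dense in $D_{\mathbb{A}}(\alpha,p)$ for $p<\infty$. This is standard in real interpolation theory ($B^0\cap B^2$ is dense in $(B^0,B^2)_{\alpha,p}$ when $p<\infty$; see \cite{Lun,Lu}), combined with \eqref{is1}. It can also be checked directly: for $f\in D_{\mathbb{A}}(\alpha,p)$, the elements $f_\varepsilon:=e^{-\varepsilon\mathbb{A}}f\in B^2$ satisfy $w_{f_\varepsilon-f}(s)\le (M+1)w_f(s)$ pointwise and $w_{f_\varepsilon-f}(s)\to0$ for each $s$ by strong continuity. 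Dominated convergence in $L^p((0,1))$ then gives $f_\varepsilon\to f$.

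Next, for $g\in B^2$ the map $t\mapsto e^{-t\mathbb{A}}g$ is continuous into $B^2$ (strong continuity applied to $\mathbb{A}g\in B^0$), and $B^2\hookrightarrow D_{\mathbb{A}}(\alpha,p)$ by \eqref{intr1} and \eqref{is1}. So $\mathscr{I}(g)\in C([0,\tau],D_{\mathbb{A}}(\alpha,p))$. For general $f$, choose $g_k\in B^2$ with $g_k\to f$ in $D_{\mathbb{A}}(\alpha,p)$. The uniform bound from the first paragraph gives $\sup_t\|\mathscr{I}(g_k)(t)-\mathscr{I}(f)(t)\|_{D_{\mathbb{A}}(\alpha,p)}\le M\|g_k-f\|\to0$. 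A uniform limit of continuous functions is continuous, which proves the first assertion.

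The second assertion follows from the first with $p=1$, because $D_{\mathbb{A}}(\alpha,1)\hookrightarrow X_\alpha$ for $X_\alpha\in J_\alpha$ by Lemma \ref{lem2}. The main obstacle is the density step. It is exactly where $p<\infty$ is needed, since continuity fails in general for $p=\infty$. The dominated convergence argument above should handle it cleanly.
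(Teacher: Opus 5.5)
Your proof is correct and rests on the same two ingredients as the paper's: the uniform bound $\|e^{-t\mathbb{A}}\|_{\mathscr{B}(D_{\mathbb{A}}(\alpha,p))}\le M$, and the convergence $e^{-\varepsilon\mathbb{A}}f\to f$ in $D_{\mathbb{A}}(\alpha,p)$ for $p<\infty$. The paper simply cites these from Lunardi (Propositions 2.2.9 and 2.2.8), whereas you verify them directly from the definition of $w_f$. The difference is in how the argument is finished: the paper concludes at once from $e^{-(t+h)\mathbb{A}}f-e^{-t\mathbb{A}}f=e^{-t\mathbb{A}}(e^{-h\mathbb{A}}f-f)$. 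Your detour through density of $B^2$, continuity for $B^2$ data, and uniform limits is valid but unnecessary, because your dominated-convergence step already proves $\|e^{-h\mathbb{A}}f-f\|_{D_{\mathbb{A}}(\alpha,p)}\to 0$.
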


\begin{proof}
Let $f\in D_{\mathbb{A}}(\alpha ,p)$, denote $\tau_0>\tau$ be arbitrarily fixed, and let $t\in [0,\tau]$ and $h>0$ such that $t+h\in [0,\tau_0]$. As we can replace $1$ by $\tau_0$ in the proof of \cite[Proposition 2.2.9]{Lun}, using
\[
\mathbf{c}:=\sup_{0\le t\le \tau_0}\|e^{-t\mathbb{A}}\|_{\mathscr{B}(D_{\mathbb{A}}(\alpha ,p))}<\infty,
\]
we obtain
\begin{align*}
\|e^{-(t+h)\mathbb{A}}f-e^{-t\mathbb{A}}f\|_{D_{\mathbb{A}}(\alpha ,p)}&=\|e^{-t\mathbb{A}}(e^{-h\mathbb{A}}f-f)\|_{D_{\mathbb{A}}(\alpha ,p)}
\\
&\le \mathbf{c} \|e^{-h\mathbb{A}}f-f\|_{D_{\mathbb{A}}(\alpha ,p)}.
\end{align*}
This, together with \cite[Proposition 2.2.8]{Lun}, yields
\[
\lim_{h\rightarrow 0}\|e^{-(t+h)\mathbb{A}}f-e^{-t\mathbb{A}}f\|_{D_{\mathbb{A}}(\alpha ,p)}=0.
\]
The proof is complete.
\end{proof}

As a consequence of Proposition \ref{pro3}, we have the following corollary.

\begin{corollary}\label{cor2}
For all $f\in D_{\mathbb{A}}(\alpha ,1)$, we have $\mathscr{I}(f)\in C ([0,\tau],C_b^{\lfloor 2\alpha \rfloor, 2\alpha -\lfloor 2\alpha\rfloor})$ if $\alpha\ne \frac{1}{2}$, and $\mathscr{I}(f)\in C ([0,\tau],C_b^1)$ if $\alpha= \frac{1}{2}$.
\end{corollary}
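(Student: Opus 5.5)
The corollary follows by composing Proposition \ref{pro3} with the Hölder embeddings of Theorem \ref{thm1} (vi) and (v), passing through the spectral Barron scale by means of Theorem \ref{thm2}. Fix $f\in D_{\mathbb{A}}(\alpha,1)$. By Proposition \ref{pro3}, applied with $p=1$, we have $\mathscr{I}(f)\in C([0,\tau],D_{\mathbb{A}}(\alpha,1))$. It therefore suffices to find, for each of the two cases, a Banach space $Y$ with a continuous embedding $D_{\mathbb{A}}(\alpha,1)\hookrightarrow Y$. Composing with that embedding preserves continuity in $t$, so $\mathscr{I}(f)\in C([0,\tau],Y)$.

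\textbf{Case $\alpha\ne\frac12$.} Write $2\alpha=k+\theta$ with $k=\lfloor 2\alpha\rfloor\in\{0,1\}$ and $\theta=2\alpha-k\in(0,1)$. We want to land in $C_b^{k,\theta}$. Theorem \ref{thm1} (vi) only gives $B^{k+\theta}\hookrightarrow C_b^{k,\gamma}$ for $\gamma<\theta$. For the endpoint, note that $B^{2\alpha}\in J_\alpha$ by Theorem \ref{thm1} (i), and Lemma \ref{lem2} then gives $D_{\mathbb{A}}(\alpha,1)\hookrightarrow B^{2\alpha}$.

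The main obstacle is the sharp embedding $B^{k+\theta}\hookrightarrow C_b^{k,\theta}$, which I would prove directly. For $|\beta|=k$ we have
\[
\partial^\beta u(x)-\partial^\beta u(y)=(2\pi)^{-n}\int (i\xi)^\beta\hat u(\xi)\bigl(e^{ix\cdot\xi}-e^{iy\cdot\xi}\bigr)\,d\xi ,
\]
together with the elementary bound $|e^{ix\cdot\xi}-e^{iy\cdot\xi}|\le 2^{1-\theta}|\xi|^\theta|x-y|^\theta$. This yields $[\partial^\beta u]_\theta\le c\|u\|_{B^{k+\theta}}$. Combined with Theorem \ref{thm1} (v), this gives $B^{k+\theta}\hookrightarrow C_b^{k,\theta}$.

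If I prefer to use only the cited results, there is an alternative route: pick $\gamma$ slightly larger than $\alpha$, use $B^{2\gamma}\in J_\gamma$, then apply Theorem \ref{thm2} and Theorem \ref{thm1} (vi). However, this only gives exponents strictly below $\theta$, so the direct Fourier estimate above is the step to carry out.

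\textbf{Case $\alpha=\frac12$.} Here $D_{\mathbb{A}}(\frac12,1)\hookrightarrow B^{1}$ as above, and Theorem \ref{thm1} (v) gives $B^1\hookrightarrow C_b^1$. Hence $\mathscr{I}(f)\in C([0,\tau],C_b^1)$, which completes the argument.
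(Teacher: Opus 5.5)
Your proof is correct and follows the route the paper intends: Proposition \ref{pro3} with $X_\alpha=B^{2\alpha}\in J_\alpha$ (equivalently $D_{\mathbb{A}}(\alpha,1)\hookrightarrow B^{2\alpha}$ via Lemma \ref{lem2}), followed by an embedding of Barron spaces into H\"older spaces.

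The one real difference is the last step. The paper says only that the corollary is a consequence of Proposition \ref{pro3}, implicitly relying on Theorem \ref{thm1} (vi) as in Corollary \ref{cor1}. As quoted, (vi) gives $B^{k+\theta}\hookrightarrow C_b^{k,\gamma}$ only for $\gamma<\theta$, so it does not reach the endpoint exponent $2\alpha-\lfloor 2\alpha\rfloor$ that the statement asserts. Your estimate $|e^{ix\cdot\xi}-e^{iy\cdot\xi}|\le 2^{1-\theta}|\xi|^\theta|x-y|^\theta$, combined with Theorem \ref{thm1} (v), proves the sharp embedding $B^{k+\theta}\hookrightarrow C_b^{k,\theta}$ directly. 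That is exactly what is needed to justify the corollary as written.

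The ``alternative route'' you set aside is garbled. With $\gamma>\alpha$, Theorem \ref{thm2} embeds $B^{2\gamma}$ into $D_{\mathbb{A}}(\alpha,1)$, which is the wrong direction for a given $f\in D_{\mathbb{A}}(\alpha,1)$. The usable direction is $D_{\mathbb{A}}(\alpha,1)\hookrightarrow B^{2\beta}$ with $\beta<\alpha$. Since you discard that route, it does not affect the argument.
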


We now introduce the definition of a mild solution of the following abstract Cauchy problem:
\begin{equation}\label{aeq1}
u'(t)+\mathbb{A}u(t)=f(t),\quad u(0)=u_0.
\end{equation}
Let $f\in L^1((0,\tau),B^0)$ and $u_0\in B^0$. Since for all $w\in B^0$ and $t>0$ we have $\frac{d}{dt}\left[e^{-t\mathbb{A}}w\right]=-\mathbb{A}e^{-t\mathbb{A}}w$, applying Duhamel's formula formally to \eqref{aeq1} gives
\begin{equation}\label{ms1}
u(t)=e^{-t\mathbb{A}}u_0+\int_0^t e^{-(t-s)\mathbb{A}}f(s)ds,\quad t\in [0,\tau].
\end{equation}
Recall that $u$ given by \eqref{ms1} is usually called the mild solution of \eqref{aeq1}. According to \cite[Corollary 4.2.4]{Lun}, $u\in L^\infty ((0,\tau), B^0)\cap C((0,\tau],B^0)$ and satisfies
\begin{equation}\label{ms2}
\|u(t)\|_{B^0}\le \|u_0\|_{B^0}+\|f\|_{L^1((0,\tau),B^0)}, \quad t\in [0,\tau].
\end{equation}
In the case $u_0=0$ and $f\in L^\infty((0,\tau),B^0)$, we have $u=\mathscr{C}(f)$; in the case $f=0$, we have $u=\mathscr{I}(u_0)$.

Given that $e^{-t\mathbb{A}}u_0\in C^\infty ((0,\infty), D(A))$ for all $u_0\in B^0$ (see, for example, \cite[Proposition 2.1.1]{Lun}), as a consequence of Proposition \ref{pro2}, we obtain the following regularity result concerning the mild solution of \eqref{aeq1}.

\begin{theorem}\label{thm3}
For all $0<\alpha<1$, $0<\epsilon <\tau$, $f\in L^\infty((0,\tau),B^0)$ and $u_0\in B^0$, the mild solution $u$ of \eqref{aeq1} belongs to $C^{1-\alpha}([\epsilon,\tau],D_{\mathbb{A}}(\alpha,1))$. In particular, $u\in C^{1-\alpha}([\epsilon,\tau],X_\alpha)$ for all $X_\alpha \in J_\alpha$.
\end{theorem}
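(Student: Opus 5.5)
The mild solution \eqref{ms1} splits as $u=\mathscr{I}(u_0)+\mathscr{C}(f)$, and I will treat the two pieces separately on $[\epsilon,\tau]$.

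For the convolution term, Proposition \ref{pro2} already gives $\mathscr{C}(f)\in C^{1-\alpha}([0,\tau],D_{\mathbb{A}}(\alpha,1))$. Restricting to $[\epsilon,\tau]$ keeps this regularity, so nothing more is needed there.

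For the semigroup term, the plan is to show $t\mapsto e^{-t\mathbb{A}}u_0$ is Lipschitz on $[\epsilon,\tau]$ with values in $B^2$. Lipschitz then implies $C^{1-\alpha}$ on a bounded interval, and $B^2\hookrightarrow D_{\mathbb{A}}(\alpha,1)$ by \eqref{intr1} and \eqref{is1}. I use the analytic semigroup estimates of \cite[Proposition 2.1.1]{Lun}: $\|\mathbb{A}^k e^{-t\mathbb{A}}\|_{\mathscr{B}(B^0)}\le M_k t^{-k}$ for $t>0$, $k=1,2$. The $B^2$ norm is equivalent to the graph norm $\|\cdot\|_{B^0}+\|\mathbb{A}\cdot\|_{B^0}$; indeed $\|\langle\xi\rangle^2\hat w\|_{L^1}=\|\mathscr{F}(\mathbb{A}w)\|_{L^1}$. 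For $\epsilon\le s<t\le\tau$ we have
\[
\mathbb{A}e^{-t\mathbb{A}}u_0-\mathbb{A}e^{-s\mathbb{A}}u_0=-\int_s^t\mathbb{A}^2e^{-r\mathbb{A}}u_0\,dr ,
\]
so $\|e^{-t\mathbb{A}}u_0-e^{-s\mathbb{A}}u_0\|_{B^2}\le C\epsilon^{-2}(t-s)\|u_0\|_{B^0}$, together with the analogous bound for the $B^0$ part. Likewise $\sup_{[\epsilon,\tau]}\|e^{-t\mathbb{A}}u_0\|_{B^2}\le C\epsilon^{-1}\|u_0\|_{B^0}$. Hence $\mathscr{I}(u_0)\in \mathrm{Lip}([\epsilon,\tau],B^2)\hookrightarrow C^{1-\alpha}([\epsilon,\tau],D_{\mathbb{A}}(\alpha,1))$, using $|t-s|\le \tau^{\alpha}|t-s|^{1-\alpha}$.

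Adding the two pieces gives $u\in C^{1-\alpha}([\epsilon,\tau],D_{\mathbb{A}}(\alpha,1))$. The statement about $X_\alpha\in J_\alpha$ then follows from Lemma \ref{lem2}, which gives $D_{\mathbb{A}}(\alpha,1)\hookrightarrow X_\alpha$.

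The only delicate point is justifying the derivative bounds for the semigroup on $B^0$. I will take them directly from the sectoriality of $\mathbb{A}$ (Proposition \ref{pro1}) and \cite[Proposition 2.1.1]{Lun}. Alternatively they can be checked explicitly via the Fourier multiplier $e^{-t(1+|\xi|^2)}$: the bound $\sup_\xi (1+|\xi|^2)^k e^{-t(1+|\xi|^2)}\le C_k t^{-k}$ gives the $L^1$-weighted estimate immediately.
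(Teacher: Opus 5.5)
Your proposal is correct and follows the paper's own route. The paper likewise splits $u=\mathscr{I}(u_0)+\mathscr{C}(f)$ and handles the convolution term by Proposition \ref{pro2}. For the initial-data term it simply cites $e^{-t\mathbb{A}}u_0\in C^\infty((0,\infty),D(A))$ from \cite[Proposition 2.1.1]{Lun}; your estimate $\|\mathbb{A}^k e^{-t\mathbb{A}}\|_{\mathscr{B}(B^0)}\le M_k t^{-k}$ is a quantitative form of that fact, and your conclusion via $B^2\hookrightarrow D_{\mathbb{A}}(\alpha,1)$ and Lemma \ref{lem2} matches the paper.
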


We also have the following regularity result, which is an immediate consequence of \cite[Theorem 4.3.1]{Lun}. 

\begin{theorem}\label{thm4}
Let $0<\alpha<1$, $u_0\in B^2$, $f\in C^\alpha([0,\tau],B^0)$ and assume that $Au_0+f(0)\in D_{\mathbb{A}}(\alpha,\infty)$. Then the mild solution $u$ of \eqref{aeq1} belongs to $\mathscr{X}_\alpha:=C^{1+\alpha}([0,\tau],B^0)\cap C^\alpha ([0,\tau], B^2)$. Furthermore, the following inequality holds:
\[
\|u\|_{\mathscr{X}_\alpha}\le \mathbf{c}\left(\|u_0\|_{B^2}+\|Au_0+f(0)\|_{D_{\mathbb{A}}(\alpha,\infty)}+\|f\|_{C^\alpha([0,\tau],B^0)}\right),
\]
where the constant $\mathbf{c}>0$ is independent of $u_0$ and $f$.
\end{theorem}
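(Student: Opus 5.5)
The plan is to check that the hypotheses of \cite[Theorem 4.3.1]{Lun} hold in our setting and then read off the conclusion. That theorem concerns the abstract problem $u'=\mathbf{A}u+f$, $u(0)=u_0$, where $\mathbf{A}$ is sectorial on a Banach space $X$. Its hypotheses are $u_0\in D(\mathbf{A})$, $f\in C^\alpha([0,\tau],X)$ and the compatibility condition $\mathbf{A}u_0+f(0)\in D_{\mathbf{A}}(\alpha,\infty)$. Its conclusion is that the mild solution lies in $C^{1+\alpha}([0,\tau],X)\cap C^\alpha([0,\tau],D(\mathbf{A}))$, with the stated a priori estimate.

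We take $X=B^0$ and $\mathbf{A}=-\mathbb{A}=A-1$. By Proposition \ref{pro1}, $A$ is sectorial. The shifted operator $A-1$ is then also sectorial, with the same angle and with $\omega$ shifted by $-1$. Its domain is $B^2$, and its graph norm is equivalent to $\|\cdot\|_{B^2}$ because $\lambda-A$ is an isomorphism from $B^2$ onto $B^0$. The mild solution defined by \eqref{ms1} coincides with the variation-of-constants formula in \cite{Lun}, since $e^{t(A-1)}=e^{-t\mathbb{A}}$. The spaces $C^{1+\alpha}([0,\tau],B^0)$ and $C^\alpha([0,\tau],B^2)$ in the definition of $\mathscr{X}_\alpha$ are the ones introduced in Subsection \ref{sb2.4}.

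The only point requiring care is the compatibility condition. The abstract theorem asks that $-\mathbb{A}u_0+f(0)=Au_0-u_0+f(0)$ lie in $D_{\mathbb{A}}(\alpha,\infty)$, whereas the statement assumes $Au_0+f(0)\in D_{\mathbb{A}}(\alpha,\infty)$. These differ by $u_0$. Since $u_0\in B^2=D(\mathbb{A})$ and $B^2\hookrightarrow D_{\mathbb{A}}(\alpha,\infty)$ by \eqref{intr1} and \eqref{is1}, the two conditions are equivalent. The norms are also controlled:
\[
\|-\mathbb{A}u_0+f(0)\|_{D_{\mathbb{A}}(\alpha,\infty)}\le \|Au_0+f(0)\|_{D_{\mathbb{A}}(\alpha,\infty)}+c\|u_0\|_{B^2}.
\]
We should also note that $D_{\mathbb{A}}(\alpha,\infty)$ coincides, with equivalent norms, with the interpolation space $(B^0,B^2)_{\alpha,\infty}$ used in \cite{Lun}, by \eqref{is1}. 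It is therefore independent of the particular shift of the operator.

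Inserting this inequality into the estimate of \cite[Theorem 4.3.1]{Lun} gives the bound on $\|u\|_{\mathscr{X}_\alpha}$. The resulting constant $\mathbf{c}$ depends only on $\alpha$, $\tau$ and the sectoriality constants, and not on $u_0$ or $f$. No real obstacle is expected; the step needing care is only the bookkeeping of the shift between $A$ and $\mathbb{A}$ in the compatibility condition.
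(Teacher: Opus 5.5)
Your proposal is correct and follows the paper, which obtains the theorem as an immediate consequence of \cite[Theorem 4.3.1]{Lun} applied to the sectorial operator $-\mathbb{A}=A-1$ on $B^0$. Your additional bookkeeping is a valid detail that the paper leaves implicit: since $u_0\in B^2\hookrightarrow D_{\mathbb{A}}(\alpha,\infty)$, the stated condition $Au_0+f(0)\in D_{\mathbb{A}}(\alpha,\infty)$ is equivalent to Lunardi's $-\mathbb{A}u_0+f(0)\in D_{\mathbb{A}}(\alpha,\infty)$, with the corresponding control of norms.
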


Note that $\mathscr{X}_\alpha$ defined in Theorem \ref{thm4} satisfies
\[
\mathscr{X}_\alpha\hookrightarrow C^{1+\alpha}([0,\tau],C_b^0)\cap C^\alpha ([0,\tau], C_b^2).
\]

In light of Lemma \ref{lem1}, Theorem \ref{thm4} yields the following corollary.

\begin{corollary}\label{cor3}
Let $p\ge 1$. Under the assumptions and notations of Theorem \ref{thm4}, we have $u\in \mathscr{Y}_{\alpha,p}:=\tilde{C}^1([0,\tau],D_{\mathbb{A}}(\alpha,p))\cap \mathrm{Lip}([0,\tau],D_{\mathbb{A}}(\alpha,\infty))$. In particular, for all $X_\alpha \in J_\alpha$, we have $u\in \mathscr{Z}_\alpha:=\tilde{C}^1([0,\tau],X_\alpha)\cap \mathrm{Lip}([0,\tau],D_{\mathbb{A}}(\alpha,\infty))$. In addition, the following inequality holds:
\[
\|u\|_{H}\le \mathbf{c}\left(\|u_0\|_{B^2}+\|Au_0+f(0)\|_{D_{\mathbb{A}}(\alpha,\infty)}+\|f\|_{C^\alpha([0,\tau],B^0)}\right),\quad H\in \left\{\mathscr{Y}_{\alpha,p},\mathscr{Z}_\alpha\right\},
\]
where the constant $\mathbf{c}>0$ is independent of $u_0$ and $f$.
\end{corollary}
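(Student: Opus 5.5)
The plan is to obtain Corollary \ref{cor3} directly from Theorem \ref{thm4} combined with Lemma \ref{lem1}, taking $I=[0,\tau]$.

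\emph{Step 1: regularity and the basic bound.} Under the hypotheses of Theorem \ref{thm4}, the mild solution satisfies $u\in \mathscr{X}_\alpha=C^{1+\alpha}([0,\tau],B^0)\cap C^\alpha([0,\tau],B^2)$. Moreover, $\|u\|_{\mathscr{X}_\alpha}$ is bounded by $\mathbf{c}$ times the right-hand side
\[
R:=\|u_0\|_{B^2}+\|Au_0+f(0)\|_{D_{\mathbb{A}}(\alpha,\infty)}+\|f\|_{C^\alpha([0,\tau],B^0)} .
\]

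\emph{Step 2: the space $\mathscr{Y}_{\alpha,p}$.} Lemma \ref{lem1}, applied with $\theta=\alpha$, gives two continuous embeddings:
\[
\mathscr{X}_\alpha\hookrightarrow \tilde{C}^1([0,\tau],D_{\mathbb{A}}(\alpha,p)) \quad\text{and}\quad \mathscr{X}_\alpha\hookrightarrow \mathrm{Lip}([0,\tau],D_{\mathbb{A}}(\alpha,\infty)).
\]
Hence $u\in\mathscr{Y}_{\alpha,p}$. Because both embeddings are bounded, each of the two parts of $\|u\|_{\mathscr{Y}_{\alpha,p}}$ (the $\tilde C^1$ part and the Lip part) is at most a constant times $\|u\|_{\mathscr{X}_\alpha}\le \mathbf{c}R$.

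\emph{Step 3: the space $\mathscr{Z}_\alpha$.} We have $X_\alpha\in J_\alpha$. Lemma \ref{lem1} is stated for an arbitrary $X_\alpha\in J_\alpha$ with $\theta=\alpha$, so it gives $\mathscr{X}_\alpha\hookrightarrow \tilde{C}^{\theta+1-\alpha}([0,\tau],X_\alpha)=\tilde{C}^1([0,\tau],X_\alpha)$. The Lipschitz component is the same as in Step 2. This yields $u\in\mathscr{Z}_\alpha$ together with the stated bound. An alternative route, if the interpolation inequality in Lemma \ref{lem1} is unavailable in this form, is Lemma \ref{lem2}: it gives $D_{\mathbb{A}}(\alpha,1)\hookrightarrow X_\alpha$, hence $\tilde C^1(I,D_{\mathbb{A}}(\alpha,1))\hookrightarrow \tilde C^1(I,X_\alpha)$. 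One then uses the case $p=1$ of Step 2.

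\emph{Main obstacle.} The only real point is whether the embeddings in Lemma \ref{lem1} are continuous rather than merely set inclusions, since the norm estimate needs the quantitative form. These embeddings come from \cite[Propositions 1.1.5, 2.2.12]{Lun}, where they hold with norm bounds: the difference quotient estimate controls $[\![u]\!]_1$ in $X_\alpha$ by $\|u\|_{C^\alpha(B^2)}$ and $\|u\|_{C^{1+\alpha}(B^0)}$ through the interpolation inequality defining $J_\alpha$. So I will simply record the constants and absorb them into $\mathbf{c}$.
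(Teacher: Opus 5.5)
Your proposal is correct and follows the paper's route exactly: the corollary comes from feeding $u\in\mathscr{X}_\alpha$ and the bound of Theorem \ref{thm4} into the continuous embeddings of Lemma \ref{lem1} with $\theta=\alpha$. Your remarks on tracking the embedding constants, and the alternative via Lemma \ref{lem2} for general $X_\alpha\in J_\alpha$, are sound additions; the paper's one-line argument leaves these implicit.
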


\subsection{Semilinear heat equations}\label{sb3.2}

For completeness, we recall a definition. Let $X$ be a Banach space, $X_0$ a subspace of $X$, and $\mathbf{A}:D(\mathbf{A})\subset X\rightarrow X$. The part of $\mathbf{A}$ in $X_0$, denoted by $\mathbf{A}_0$, is defined as follows:
\[
D(\mathbf{A}_0):=\{f\in D(\mathbf{A});\; \mathbf{A}f\in X_0\},\quad \mathbf{A}_0:D(\mathbf{A}_0)\rightarrow X_0: f\mapsto \mathbf{A}_0f:=\mathbf{A}f.
\]

Let $\alpha\in (0,1)$ and $X_\alpha \hookrightarrow B^0$ satisfying that the part of $\mathbb{A}$ in $X_\alpha$ is sectorial in $X_\alpha$ and $D_{\mathbb{A}}(\alpha,1)\hookrightarrow X_\alpha \hookrightarrow D_{\mathbb{A}}(\alpha,\infty)$. As pointed out in \cite[comments after (7.0.2)]{Lun}, $X_\alpha \in J_\alpha$ ; as examples of such spaces, we can take $X_\alpha=D_{\mathbb{A}}(\alpha ,p)$, $p\in [1,\infty]$, or $X_\alpha=D(\mathbb{A}^\alpha)=B^{2\alpha}$.

Let $\tau >0$ be arbitrarily fixed and let $f:[0,\tau]\times X_\alpha \rightarrow B^0$ be a continuous function satisfying: for all $\rho>0$, there exists a constant $\varkappa_\rho>0$ such that
\begin{align}
&\|f(t,w_1)-f(t,w_2)\|_{B^0}\le\varkappa_\rho \|w_1-w_2\|_{X_\alpha},\label{heat4}
\\
&\hskip 6cm t\in [0,\tau],\; \|w_j\|_{B^0}\le \rho,\; j=1,2.\nonumber
\end{align}
Then consider the following Cauchy problem in $B^0$:
\begin{equation}\label{heat5}
\left\{
\begin{array}{ll}
u'(t)+\mathbb{A}u(t)=f(t,u(t))\quad \mbox{in}\; (0,\tau ),
\\
u(0)=u_0.
\end{array}
\right.
\end{equation}
As in the linear case, the function $u:[0,\tau]\rightarrow B^0$ given by
\[
u(t)=e^{-t\mathbb{A}}u_0+\int_0^t e^{-(t-s)\mathbb{A}}f(s,u(s))ds,\quad t\in [0,\tau],
\]
is usually called the mild solution of the Cauchy problem \eqref{heat5}.

The following local existence and uniqueness result follows from \cite[Theorem 7.1.2]{Lun}. Hereafter, the ball of a Banach space $X$ of radius $\rho>0$ centered at $x\in X$ will be denoted by $\mathbf{B}_X(x,\rho)$.

\begin{theorem}\label{thm6}
For all $u_\ast\in X_\alpha$, there exist $\rho_\ast>0$, $0<\tau_\ast \le \tau$ and $\mathbf{c}>0$, depending only on $u_\ast$, $\varkappa_{\rho_\ast}$ and $\alpha$, such that for all $u_0\in \mathbf{B}_{X_\alpha}(u_\ast,\rho_\ast)$, the Cauchy problem \eqref{heat5} admits a unique mild solution $u=u(\cdot, u_0)\in C^\alpha([0,\tau_\ast],B^0)\cap C([0,\tau_\ast], X_\alpha)$. Furthermore, we have
\begin{equation}\label{heat6}
\|u(t,u_0)-u(t,w_0)\|_{X_\alpha}\le \mathbf{c}\|u_0-w_0\|_{X_\alpha},\quad t\in [0,\tau_\ast],\; u_0,w_0\in \mathbf{B}_{X_\alpha}(u_\ast,\rho_\ast).
\end{equation}
\end{theorem}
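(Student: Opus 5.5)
The plan is to verify the hypotheses of \cite[Theorem 7.1.2]{Lun} for the realization $\mathbb{A}$ on $X=B^0$ and the nonlinearity $f$, and then read off the conclusions. First, by Proposition \ref{pro1}, $A$ is sectorial on $B^0$, hence so is $-\mathbb{A}=A-1$. It generates the analytic semigroup $e^{-t\mathbb{A}}$, which is strongly continuous because $D(\mathbb{A})=B^2$ is dense in $B^0$. By assumption, $X_\alpha$ satisfies $D_{\mathbb{A}}(\alpha,1)\hookrightarrow X_\alpha\hookrightarrow D_{\mathbb{A}}(\alpha,\infty)$, so $X_\alpha\in J_\alpha$ and the part of $\mathbb{A}$ in $X_\alpha$ is sectorial. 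The function $f:[0,\tau]\times X_\alpha\to B^0$ is continuous and satisfies the local Lipschitz condition \eqref{heat4}. These are precisely the standing assumptions (7.0.1)--(7.0.2) of \cite[Chapter 7]{Lun}, with the operator there taken to be $-\mathbb{A}$.

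Next, I would recall the mechanism of the proof so that the dependence of the constants is clear. Fix $u_\ast\in X_\alpha$ and choose $\rho_\ast$ small. On $Y:=C([0,\tau_\ast],X_\alpha)$, consider the map
\[
\Gamma(u)(t)=e^{-t\mathbb{A}}u_0+\int_0^te^{-(t-s)\mathbb{A}}f(s,u(s))\,ds ,
\]
restricted to the closed ball of radius $2\rho_\ast$ around $t\mapsto e^{-t\mathbb{A}}u_\ast$. Proposition \ref{pro3} (or its analogue for $X_\alpha$) gives continuity of $t\mapsto e^{-t\mathbb{A}}u_0$ in $X_\alpha$. The key estimate is
\[
\|e^{-t\mathbb{A}}\|_{\mathscr{B}(B^0,X_\alpha)}\le c\,t^{-\alpha},\quad 0<t\le\tau,
\]
which follows from $X_\alpha\in J_\alpha$ together with $\|e^{-t\mathbb{A}}\|\le c$ and $\|\mathbb{A}e^{-t\mathbb{A}}\|\le c/t$. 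Combining this with \eqref{heat4}, we get $\|\Gamma u-\Gamma v\|_Y\le c\varkappa_{\rho}\tau_\ast^{1-\alpha}\|u-v\|_Y$, where $\rho$ bounds the $B^0$ norms of functions in the ball; since $X_\alpha\hookrightarrow B^0$, $\rho$ depends only on $u_\ast$ and $\rho_\ast$. Taking $\tau_\ast$ small makes $\Gamma$ a contraction mapping the ball into itself, which yields existence and uniqueness of the mild solution in $C([0,\tau_\ast],X_\alpha)$. The same contraction estimate, applied to two initial data, gives \eqref{heat6} with $\mathbf{c}=c/(1-1/2)$.

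Finally, the H\"older regularity $u\in C^\alpha([0,\tau_\ast],B^0)$ follows as in \cite[Theorem 7.1.2]{Lun}. The forcing $s\mapsto f(s,u(s))$ is bounded in $B^0$, so the convolution term lies in $C^{1-\beta}([0,\tau_\ast],D_{\mathbb{A}}(\beta,1))\hookrightarrow C^{1-\beta}([0,\tau_\ast],B^0)$ by Proposition \ref{pro2}. The initial term $e^{-t\mathbb{A}}u_0$ with $u_0\in X_\alpha\hookrightarrow D_{\mathbb{A}}(\alpha,\infty)$ is $C^\alpha$ into $B^0$, by the characterization $\|e^{-t\mathbb{A}}u_0-u_0\|_{B^0}\le ct^\alpha$ of $D_{\mathbb{A}}(\alpha,\infty)$. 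The main obstacle is not conceptual but bookkeeping. One must check that the uniqueness class is the one stated, and that $\rho_\ast$, $\tau_\ast$ and $\mathbf{c}$ depend only on $u_\ast$, $\varkappa_{\rho_\ast}$ and $\alpha$. This is exactly the content of Lunardi's theorem once the hypotheses above are verified, so I would invoke it directly rather than redo the fixed-point argument.
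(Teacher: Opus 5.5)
Your route is the paper's. Its proof consists only of noting that $\mathbb{A}$, $X_\alpha$ and $f$ fit the setting of \cite[Chapter 7]{Lun} and invoking \cite[Theorem 7.1.2]{Lun}. Your sketch of the contraction argument behind that theorem is sound: the bound $\|e^{-t\mathbb{A}}\|_{\mathscr{B}(B^0,X_\alpha)}\le ct^{-\alpha}$ from $X_\alpha\in J_\alpha$, the factor $\tau_\ast^{1-\alpha}$, \eqref{heat6} from the same estimate, and $C^\alpha([0,\tau_\ast],B^0)$ via $X_\alpha\hookrightarrow D_{\mathbb{A}}(\alpha,\infty)$ and Proposition \ref{pro2}.

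One step, however, fails for an admissible choice of $X_\alpha$, and the statement itself shares this defect. The step is the continuity of $t\mapsto e^{-t\mathbb{A}}u_0$ at $t=0$ in $X_\alpha$, which you take from ``Proposition \ref{pro3} (or its analogue for $X_\alpha$)''. Proposition \ref{pro3} covers $D_{\mathbb{A}}(\alpha,p)$ only for $p<\infty$, and a general $X_\alpha$ only for $u_0\in D_{\mathbb{A}}(\alpha,1)$. The analogue also holds for $X_\alpha=B^{2\alpha}$ by dominated convergence. It fails for $X_\alpha=D_{\mathbb{A}}(\alpha,\infty)$, which the paper explicitly allows and in which $B^2$ is not dense.

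Indeed, take $f\equiv 0$ and $\widehat{u_0}\ge 0$ with $\int_{2^k\le|\xi|<2^{k+1}}\widehat{u_0}\,d\xi=4^{-\alpha k}$ for $k\ge 0$. Then $\|e^{-t\mathbb{A}}u_0-u_0\|_{B^0}\approx t^\alpha$ for $0<t<1$, so $u_0\in D_{\mathbb{A}}(\alpha,\infty)$. On the other hand, $s^{-\alpha}\|(e^{-s\mathbb{A}}-1)^2u_0\|_{B^0}$ bounds $\|e^{-s\mathbb{A}}u_0-u_0\|_{D_{\mathbb{A}}(\alpha,\infty)}$ from below up to a constant, and it stays bounded away from $0$ as $s\to 0$. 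Hence the mild solution $e^{-t\mathbb{A}}u_0$ is not in $C([0,\tau_\ast],D_{\mathbb{A}}(\alpha,\infty))$ for any $\tau_\ast>0$, and the paper's bare citation cannot supply this either.

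The repair is to run your contraction in $L^\infty((0,\tau_\ast),X_\alpha)\cap C((0,\tau_\ast],X_\alpha)$, where all your estimates, including \eqref{heat6}, go through unchanged. Continuity at $t=0$ in $X_\alpha$ should then be claimed only when $u_0$ lies in the $X_\alpha$-closure of $B^2$, for instance for $X_\alpha=B^{2\alpha}$ or $X_\alpha=D_{\mathbb{A}}(\alpha,p)$ with $p<\infty$.
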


\begin{example}\label{exa1}
{\rm
In this example, $0<\alpha <1$ and $X_\alpha =D(\mathbb{A}^\alpha)=B^{2\alpha}$. Let $m\ge 1$ be an integer, $\varphi \in C([0,\tau])$, and define $f(t,w)=\varphi(t) w^m$ for $w\in B^{2\alpha}$. In light of Theorem \ref{thm1} (iv), $f:[0,\tau]\times B^{2\alpha} \rightarrow B^{2\alpha}$ is continuous. Furthermore, for all $\rho>0$, we verify that
\[
\|f(t,w_1)-f(t,w_2)\|_{B^0}\le \varkappa_\rho\|w_1-w_2\|_{B^0},
\quad t\in [0,\tau],\;  \|w_j\|_{B^{2\alpha}}\le \rho,\; j=1,2,
\]
where $\varkappa_\rho=m(2\pi)^{-n(m-1)}\rho^{m-1}\|\varphi\|_{C([0,\tau])}$. Consequently, $f$ satisfies the above conditions, and Theorem \ref{thm6} therefore applies to this $f$.

With the notations of Theorem \ref{thm6}, assume that $u_0\in \mathbf{B}_{B^{2\alpha}}(u_\ast,\rho_\ast)\cap B^2$ is such that $\mathbb{A}u_0\in D_{\mathbb{A}}(\alpha,\infty)$, and $\varphi\in C^\alpha([0,\tau])$ in such a way that $t\mapsto f(t,u(t))$ belongs to $C^\alpha([0,\tau_\ast],B^{2\alpha})$. Applying Theorem \ref{thm4}, we obtain that \[u\in C^{1+\alpha}([0,\tau_\ast],B^0)\cap C^\alpha([0,\tau_\ast], B^2).\]
}
\end{example}

Let
$
I(u_0)=\{s\in [0,\tau];\; \mbox{\eqref{heat5} admits a mild solution}\; u\in L^\infty((0,s),B^0)\}
$
and $\tau(u_0)=\sup I(u_0)$. As proved in \cite[Proposition 7.1.8]{Lun}, if $\tau(u_0)<\tau$, then
\[
\limsup_{t\rightarrow \tau(u_0)}\|f(t,u(t))\|_{B^0}=\infty.
\]
Note that the mild solution of \eqref{heat5} is well defined on $[0,\tau(u_0))$.

The regularity result proved in Example \ref{exa1} is in fact valid in a general context. Precisely, from \cite[Proposition 7.1.10, (iv)]{Lun} we have the following result.

\begin{proposition}\label{pro4}
Assume that $f$ additionally satisfies: for all $u_0\in X_\alpha$, there exist $\varrho >0$ and $\mathbf{c}>0$ such that
\[
\|f(t,w)-f(t,u_0)\|_{B^0}\le \mathbf{c}(t-s)^\alpha ,\quad 0\le s\le t\le \tau,\; w\in \mathbf{B}_{X_\alpha}(u_0,\varrho).
\]
Let $u_0\in B^2$ and suppose that $\mathbb{A}u_0+f(0,u_0)\in D_{\mathbb{A}}(\alpha ,\infty)$. If $u: [0,\tau(u_0))\rightarrow B^0$ is the mild solution of \eqref{heat5}, then for all $0<\tilde{\tau}<\tau(u_0)$ we have $u\in C^{1+\alpha}([0,\tilde{\tau}],B^0)\cap C^\alpha([0,\tilde{\tau}],B^2)$.
\end{proposition}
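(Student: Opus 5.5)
The statement is presented as a direct specialization of \cite[Proposition 7.1.10, (iv)]{Lun}, so the plan is to check that every hypothesis of that abstract result holds in the present setting, and then reproduce its mechanism in outline. The ambient space is $X=B^0$. The operator $\mathbb{A}=-A+1$ is sectorial by Proposition \ref{pro1}, and $D(\mathbb{A})=B^2$ is dense in $B^0$. The intermediate space satisfies $X_\alpha\in J_\alpha$ with $D_{\mathbb{A}}(\alpha,1)\hookrightarrow X_\alpha\hookrightarrow D_{\mathbb{A}}(\alpha,\infty)$. The nonlinearity $f$ is continuous and locally Lipschitz in the sense of \eqref{heat4}, and it is in addition locally $\alpha$-Hölder in time, uniformly near each point of $X_\alpha$, as assumed in the statement. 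These are exactly the assumptions Lunardi uses, so once they are checked the conclusion follows.

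For a self-contained argument, the first step fixes $0<\tilde\tau<\tau(u_0)$. By Theorem \ref{thm6}, combined with the continuation argument, the mild solution satisfies $u\in C([0,\tilde\tau],X_\alpha)$. Its range $K=u([0,\tilde\tau])$ is therefore compact in $X_\alpha$.

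The second step shows $u\in C^\alpha([0,\tilde\tau],X_\alpha)$, or at least that $g(t):=f(t,u(t))\in C^\alpha([0,\tilde\tau],B^0)$.
\begin{itemize}
\item Cover $K$ by finitely many balls $\mathbf{B}_{X_\alpha}(u(t_i),\varrho_i)$ on which the time-Hölder hypothesis holds.
\item Combine this with the Lipschitz bound \eqref{heat4} to get
\[
\|g(t)-g(s)\|_{B^0}\le \mathbf{c}\left(|t-s|^\alpha+\|u(t)-u(s)\|_{X_\alpha}\right).
\]
\item Estimate $\|u(t)-u(s)\|_{X_\alpha}$ from the mild formula. The part $\mathscr{C}(g)$ lies in $C^{1-\alpha}([0,\tilde\tau],X_\alpha)$ by Corollary \ref{cor1}. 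The part $e^{-t\mathbb{A}}u_0$ is Lipschitz in $X_\alpha$, since $u_0\in B^2=D(\mathbb{A})$ and $\|e^{-t\mathbb{A}}u_0-e^{-s\mathbb{A}}u_0\|_{X_\alpha}\le \mathbf{c}|t-s|\,\|\mathbb{A}u_0\|_{X_\alpha}$ is unavailable in general; instead I would use $\|(e^{-h\mathbb{A}}-I)w\|_{X_\alpha}\le \mathbf{c}h^{1-\alpha}\|w\|_{B^2}$, which follows from the interpolation inequality.
\end{itemize}
Together these give $u\in C^{\min(\alpha,1-\alpha)}$, and hence a Hölder bound on $g$ with a possibly smaller exponent.

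This bootstrap in the second step is the main obstacle, because the exponent may come out as $1-\alpha<\alpha$ when $\alpha>1/2$. I would resolve it following Lunardi: first apply the linear theory, namely \cite[Theorem 4.3.1]{Lun} / Theorem \ref{thm4} with the smaller exponent. This places $u$ in $C^{1+\beta}([0,\tilde\tau],B^0)\cap C^\beta([0,\tilde\tau],B^2)$. Then Lemma \ref{lem1} shows that $u$ is Lipschitz into $D_{\mathbb{A}}(\alpha,\infty)$, and therefore into $X_\alpha$ up to the embedding. Feeding this back in gives $g\in C^\alpha([0,\tilde\tau],B^0)$.

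The final step applies Theorem \ref{thm4} with exponent $\alpha$ and source term $g$. Its compatibility condition $\mathbb{A}u_0+g(0)\in D_{\mathbb{A}}(\alpha,\infty)$ is exactly the hypothesis on $u_0$; note that $-Au_0$ versus $\mathbb{A}u_0$ differ by $u_0\in B^2\subset D_{\mathbb{A}}(\alpha,\infty)$, so the sign conventions are harmless. The linear solution with data $(u_0,g)$ coincides with the mild solution $u$ by uniqueness in \eqref{ms1}. This yields $u\in C^{1+\alpha}([0,\tilde\tau],B^0)\cap C^\alpha([0,\tilde\tau],B^2)$.
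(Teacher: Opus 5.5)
Your top-level plan matches the paper, which gives nothing beyond the citation of \cite[Proposition 7.1.10, (iv)]{Lun}. The problems are in your self-contained reconstruction.

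The step ``Lemma \ref{lem1} shows that $u$ is Lipschitz into $D_{\mathbb{A}}(\alpha,\infty)$, and therefore into $X_\alpha$'' fails for two reasons. First, Lemma \ref{lem1} yields Lipschitz continuity into $D_{\mathbb{A}}(\theta,\infty)$ only for $u\in C^\theta(I,B^2)\cap C^{1+\theta}(I,B^0)$ with the \emph{same} $\theta$. In the case $\alpha>\frac{1}{2}$ that you single out, the first pass gives $\theta=1-\alpha<\alpha$, hence only $u\in\mathrm{Lip}([0,\tilde\tau],D_{\mathbb{A}}(1-\alpha,\infty))$. Second, even Lipschitz continuity into $D_{\mathbb{A}}(\alpha,\infty)$ would not pass to $X_\alpha$, because the embedding runs the other way ($X_\alpha\hookrightarrow D_{\mathbb{A}}(\alpha,\infty)$), while \eqref{heat4} needs increments measured in $X_\alpha$.

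What the first assertion of Lemma \ref{lem1} does give, for any $X_\alpha\in J_\alpha$, is $u\in\tilde{C}^{2-2\alpha}([0,\tilde\tau],X_\alpha)=C^{2-2\alpha}([0,\tilde\tau],X_\alpha)$. Hence $g\in C^{\min\{\alpha,2-2\alpha\}}$, which is enough only when $\alpha\le\frac{2}{3}$. The missing ingredient is a finite iteration. Put $\beta_0=\min\{\alpha,1-\alpha\}$ and $\beta_{k+1}=\min\{\alpha,\beta_k+1-\alpha\}$, and suppose $g\in C^{\beta_k}([0,\tilde\tau],B^0)$. The compatibility condition transfers through $D_{\mathbb{A}}(\alpha,\infty)\hookrightarrow D_{\mathbb{A}}(\beta_k,\infty)$, so Theorem \ref{thm4} with exponent $\beta_k$ applies. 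Lemma \ref{lem1} then returns $u\in C^{\beta_k+1-\alpha}([0,\tilde\tau],X_\alpha)$ (while $\beta_k<\alpha$ this exponent is $<1$, so \eqref{hoe} applies), whence $g\in C^{\beta_{k+1}}$. Each round gains $1-\alpha$, so $\beta_k=\alpha$ after finitely many rounds, and only then is your final step justified.

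Your closing remark on signs is also wrong. Theorem \ref{thm4} requires $Au_0+g(0)\in D_{\mathbb{A}}(\alpha,\infty)$, i.e.\ $-\mathbb{A}u_0+g(0)\in D_{\mathbb{A}}(\alpha,\infty)$, since $u_0\in B^2\subset D_{\mathbb{A}}(\alpha,\infty)$. This condition is necessary, because $u'(0)=-\mathbb{A}u_0+g(0)$ must belong to $D_{\mathbb{A}}(\alpha,\infty)$ by the last assertion of Lemma \ref{lem1}. It differs from the printed hypothesis $\mathbb{A}u_0+f(0,u_0)\in D_{\mathbb{A}}(\alpha,\infty)$ by $2\mathbb{A}u_0$, not by $u_0$: you compared $\mathbb{A}u_0$ with $-Au_0$, but the term in Theorem \ref{thm4} is $+Au_0$.

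Concretely, take $f\equiv-\mathbb{A}u_0$ with $u_0\in B^2$ chosen so that $\mathbb{A}u_0\notin D_{\mathbb{A}}(\alpha,\infty)$; this is possible since $\mathbb{A}$ maps $B^2$ onto $B^0$. The printed hypothesis then holds trivially, yet $u'(0)=-2\mathbb{A}u_0\notin D_{\mathbb{A}}(\alpha,\infty)$, so the conclusion fails. The statement therefore carries a sign slip. Your argument should explicitly use the corrected hypothesis $Au_0+f(0,u_0)\in D_{\mathbb{A}}(\alpha,\infty)$ rather than declare the signs harmless.
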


\subsection{The action of the Gaussian heat kernel on $B^0$} \label{sb3.3}

Let us first recall that the Gaussian heat kernel is given by
\[
G_t(x)=\frac{1}{(4\pi t)^\frac{n}{2}} e^{-\frac{|x|^2}{4t}},\quad x\in \mathbb{R}^n,\; t>0.
\]
Clearly, $G_t\in \mathscr{S}$ for all $t>0$. Therefore, since $B^0\hookrightarrow C_b^0$, $G_t\ast f$ is well defined for all $f\in B^0$. 

In the following, we use the fact that $\hat{G}_t(\xi)=e^{-t|\xi|^2}$ for all $t>0$ and the following property holds. 
\begin{proposition}\label{prohk1}
$\mathrm{(i)}$ For all $f\in B^0$ and $t>0$, it holds $\widehat{G_t\ast f}=\hat{G}_t\hat{f}$.
\\
$\mathrm{(ii)}$ For all $f\in B^0$, $G_t\ast f$ converges to $f$ in $B^0$ as $t\to 0$.
\\
$\mathrm{(iii)}$ For all $f\in B^0$, $s\ge 0$ and $t>0$, $G_t\ast f\in B^s$.
\end{proposition}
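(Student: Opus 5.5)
The plan is to establish (i) directly and then deduce (ii) and (iii) from it on the Fourier side by dominated convergence and a pointwise bound on the multiplier.

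\emph{Part (i).} The difficulty is that $f\in B^0$ is only bounded and continuous, so $f$ need not lie in $L^1$, and the convolution theorem cannot be applied naively. I would argue in $\mathscr{S}'$. Since $f\in C_b^0\subset \mathscr{S}'$ and $G_t\in\mathscr{S}$, the standard identity $\mathscr{F}(G_t\ast f)=\hat G_t\hat f$ holds in $\mathscr{S}'$. Because $\hat f\in L^1$ and $\hat G_t$ is bounded, the right-hand side is an $L^1$ function, so the identity holds pointwise a.e.

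If one prefers to avoid distribution theory, use density instead, via Theorem \ref{thm1} (ii). Take $f_k\in\mathscr{S}$ with $f_k\to f$ in $B^0$. Since $\|g\|_{L^\infty}\le(2\pi)^{-n}\|g\|_{B^0}$, we get $f_k\to f$ uniformly, hence $G_t\ast f_k\to G_t\ast f$ uniformly (as $\|G_t\|_{L^1}=1$). At the same time $\hat G_t\hat f_k\to \hat G_t\hat f$ in $L^1$. For Schwartz functions the identity is classical. So $G_t\ast f_k=\mathscr{F}^{-1}(\hat G_t\hat f_k)$, and passing to the limit gives $G_t\ast f=\mathscr{F}^{-1}(\hat G_t\hat f)$. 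By uniqueness of the Fourier transform in $\mathscr{S}'$, this is (i). This identification is the only real obstacle in the proof.

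\emph{Part (ii).} By (i),
\[
\|G_t\ast f-f\|_{B^0}=\int_{\mathbb{R}^n}\bigl(1-e^{-t|\xi|^2}\bigr)|\hat f(\xi)|\,d\xi .
\]
The integrand tends to $0$ pointwise as $t\to0$ and is dominated by $|\hat f|\in L^1$, so dominated convergence gives the claim.

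\emph{Part (iii).} By (i), $\|G_t\ast f\|_{B^s}=\int\langle\xi\rangle^s e^{-t|\xi|^2}|\hat f|\,d\xi$. The function $\langle\xi\rangle^s e^{-t|\xi|^2}$ is bounded. Writing $r=|\xi|^2$, its supremum is bounded by $\sup_{r\ge0}(1+r)^{s/2}e^{-tr}\le e^{t}\bigl(\tfrac{s}{2et}\bigr)^{s/2}$ for $s>0$. Hence
\[
\|G_t\ast f\|_{B^s}\le C_s\max\bigl(1,t^{-s/2}\bigr)e^{t}\|f\|_{B^0}<\infty .
\]
In addition, $G_t\ast f$ is continuous and bounded, being the convolution of an $L^1$ function with a $C_b^0$ function. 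Therefore $G_t\ast f\in B^s$.
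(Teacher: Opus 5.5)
Your proof is correct and follows essentially the same route as the paper: a density argument via Theorem \ref{thm1} (ii) for (i), where your identification of $G_t\ast f$ with $\mathscr{F}^{-1}(\hat G_t\hat f)$ by uniform convergence is a slightly more direct version of the paper's Cauchy-sequence-plus-$\mathscr{S}'$-continuity step, followed by dominated convergence for (ii) and boundedness of $\langle\xi\rangle^s e^{-t|\xi|^2}$ for (iii). Your first option for (i), quoting the $\mathscr{S}'$ convolution theorem for $\mathscr{S}\ast\mathscr{S}'$, is also valid and makes the density argument unnecessary.
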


\begin{proof}
$\mathrm{(i)}$ If $f\in \mathscr{S}$, then $\widehat{G_t\ast f}=\hat{G}_t\hat{f}$; since $\|\hat{G}_t\|_{L^\infty}\le 1$, we obtain
\begin{equation}\label{hk1}
\|G_t\ast f\|_{B^0}\le \|f\|_{B^0}.
\end{equation}
Let $f\in B^0$ and let $(f_k)$ be a sequence in $\mathscr{S}$ converging to $f$ in $B^0$. In view of \eqref{hk1}, we have
\[
\|G_t\ast f_k-G_t\ast f_\ell\|_{B^0}\le \|f_k-f_\ell\|_{B^0},\quad \mbox{for all}\; k,\ell.
\]
Thus, $(G_t\ast f_k)$ is a Cauchy sequence in $B^0$. Hence, there exists $g\in B^0$ such that $(\widehat{G_t\ast f_k})$ converges to $\hat{g}$ in $L^1$. Since $\widehat{G_t\ast f_k}=\hat{G}_t\hat{f}_k$ for all $k$, and $(\hat{G}_t\hat{f}_k)$ converges to $\hat{G}_t\hat{f}$ in $L^1$, we get $\hat{g}=\hat{G}_t\hat{f}$. On the other hand, since $f_k\to f$ in $C_b^0$, we have $(G_t\ast f_k)\to G_t\ast f$ in $\mathscr{S}'$. The continuity of the Fourier transform on $\mathscr{S}'$ then implies that $(\widehat{G_t\ast f_k})$ converges to $\widehat{G_t\ast f}$ in $\mathscr{S}'$. Hence, $\widehat{G_t\ast f}=\hat{G}_t\hat{f}$. 
\\
$\mathrm{(ii)}$ Let $f\in B^0$. Then we have
\[
\|G_t\ast f-f\|_{B^0}=\|(e^{-t|\xi|^2}-1)\hat{f}\|_{L^1}.
\]
The desired result follows from the dominated convergence theorem. 
\\
$\mathrm{(iii)}$ This follows from the fact that $\langle \xi\rangle^s \hat{G}_t\in L^\infty$ for all $s\ge 0$ and $t>0$.
\\
The proof is complete.
\end{proof}

We already know that $e^{tA}f=G_t\ast f$ for all $f\in \mathscr{S}$. Therefore, $G_t$ is precisely the heat kernel of $e^{tA}$.

\subsection{Backward heat equation in $B^0$}\label{sb3.4}
In this subsection, we revisit the classical backward heat equation and establish a conditional stability result for this inverse problem.
To this end, we consider a simplified form of (\ref{heat1}), which yields the following abstract Cauchy problem:
\begin{equation}\label{eq_BHP}
\left\{
\begin{array}{l}
\partial_t u (t,x)-\Delta u(t,x) = 0, \quad \mbox{in}\; (0,\tau)\times \mathbb{R}^n,
\\
v(0,\cdot)=u_0.
\end{array}
\right.
\end{equation}
Let the final time $\tau >0$ be fixed, and recall that $A=\Delta$. We consider the following initial-state-to-final-state operator
\[
\mathfrak{I}_\tau: u_0\in B^0\mapsto \mathfrak{I}_\tau (u_0):= e^{\tau A} u_0\in B^0.
\]
Define
\[
\mathfrak{R}_\tau:=\{h\in C_b^0;\; e^{\tau|\xi|^2}\hat{h}\in L^1\},
\]
which we equip with the norm
\[
\|h\|_{\mathfrak{R}_\tau}:=\|e^{\tau|\xi|^2}\hat{h}\|_{L^1}.
\]
Note that $\mathfrak{R}_\tau\subset \bigcap_{s\ge 0}B^s$.

\begin{lemma}\label{lemif1}
$\mathfrak{I}_\tau$ is an isometric isomorphism from $B^0$ onto $\mathfrak{R}_\tau$.
\end{lemma}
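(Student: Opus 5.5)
The plan is to work entirely on the Fourier side, using Proposition \ref{prohk1} to identify $e^{\tau A}u_0$ with $G_\tau\ast u_0$. First I will justify the identity $e^{\tau A}u_0=G_\tau\ast u_0$ for every $u_0\in B^0$, not only for $u_0\in\mathscr{S}$. Both sides are bounded linear maps on $B^0$: the left side because $e^{tA}$ is a semigroup on $B^0$, the right side by \eqref{hk1}. They agree on $\mathscr{S}$, which is dense in $B^0$ by Theorem \ref{thm1} (ii), so they agree on $B^0$. Proposition \ref{prohk1} (i) then gives
\[
\widehat{\mathfrak{I}_\tau(u_0)}=e^{-\tau|\xi|^2}\hat{u}_0 .
\]

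Next I will check that $\mathfrak{I}_\tau$ maps $B^0$ into $\mathfrak{R}_\tau$ isometrically. Set $h:=\mathfrak{I}_\tau(u_0)$. Then $h\in B^0\subset C_b^0$, and $e^{\tau|\xi|^2}\hat h=\hat u_0\in L^1$. Hence $h\in\mathfrak{R}_\tau$ and
\[
\|h\|_{\mathfrak{R}_\tau}=\|\hat u_0\|_{L^1}=\|u_0\|_{B^0}.
\]
Injectivity follows immediately from the isometry.

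For surjectivity, take $h\in\mathfrak{R}_\tau$ and set $g:=e^{\tau|\xi|^2}\hat h\in L^1$. Define $u_0:=\mathscr{F}^{-1}g$. Since $g\in L^1$, $u_0$ is a bounded continuous function; this is the Riemann--Lebesgue type fact that the inverse Fourier transform of an $L^1$ function lies in $C_b^0$. Also $\hat u_0=g\in L^1$, so $u_0\in B^0$. Then
\[
\widehat{\mathfrak{I}_\tau(u_0)}=e^{-\tau|\xi|^2}g=\hat h .
\]
Since $\hat h\in L^1$ (because $e^{\tau|\xi|^2}\ge1$) and both functions are in $C_b^0\subset\mathscr{S}'$, injectivity of the Fourier transform on $\mathscr{S}'$ gives $\mathfrak{I}_\tau(u_0)=h$.

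The only delicate step is making sure the Fourier transforms are taken in the correct sense. Here $\hat u_0$ for $u_0\in C_b^0$ is a tempered distribution, and one needs that $\mathscr{F}^{-1}g$ with $g\in L^1$ is a genuine $C_b^0$ function whose distributional transform is $g$. This is standard, so I do not expect a real obstacle. The main point to state carefully is the density argument extending the kernel identity to all of $B^0$.
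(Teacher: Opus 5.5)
Your proof is correct and follows the same Fourier-side argument as the paper: the isometry comes from $\widehat{\mathfrak{I}_\tau(u_0)}=e^{-\tau|\xi|^2}\widehat{u_0}$ via Proposition \ref{prohk1}, and surjectivity from taking $u_0\in C_b^0$ with $\widehat{u_0}=e^{\tau|\xi|^2}\hat h\in L^1$ and using injectivity of the Fourier transform on $\mathscr{S}'$. Your density argument extending $e^{\tau A}u_0=G_\tau\ast u_0$ from $\mathscr{S}$ to all of $B^0$ is a useful detail that the paper leaves implicit.
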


\begin{proof}
From Proposition \ref{prohk1}, we have
\[
\widehat{\mathfrak{I}_\tau( u_0)}=e^{-\tau |\xi|^2}\widehat{u_0},\quad u_0\in B^0,
\]
and hence
\[
\|\mathfrak{I}_\tau(u_0)\|_{\mathfrak{R}_\tau}=\|e^{\tau |\xi|^2}\widehat{\mathfrak{I}_\tau(u_0)}\|_{L^1}=\|\widehat{u_0}\|_{L^1}=\|u_0\|_{B^0}.
\]
That is, $\mathfrak{I}_\tau$ is an isometry from $B^0$ to $\mathfrak{R}_\tau$. 

Let $g\in \mathfrak{R}_\tau$ and $u_0\in C_b^0$ be such that $\widehat{u_0}:=e^{\tau |\xi|^2}\hat{g}\in L^1$. Applying Proposition \ref{prohk1} again, we obtain
\[
\hat{g}=e^{-\tau |\xi|^2}\widehat{u_0}=\widehat{G_\tau \ast u_0}.
\]
Consequently, $g=G_\tau \ast u_0=\mathfrak{I}_\tau(u_0)$. This shows that $\mathfrak{I}_\tau$ is surjective, and thus completes the proof.
\end{proof}

\begin{proposition}\label{proif1}
Let $s>0$. For all $u_0\in B^s$ and $\rho >0$, we have
\begin{equation}\label{if1}
\|u_0\|_{B^0}\le e^{\tau \rho}\|\mathfrak{I}_\tau(u_0)\|_{B^0}+\rho^{-\frac{s}{2}}\|u_0\|_{B^s}.
\end{equation}
\end{proposition}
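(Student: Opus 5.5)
The plan is a frequency splitting on the Fourier side: low frequencies are controlled by the final state, and high frequencies by the a priori $B^s$ bound. By Lemma \ref{lemif1} (equivalently, Proposition \ref{prohk1} (i)), we have $\widehat{\mathfrak{I}_\tau(u_0)}=e^{-\tau|\xi|^2}\widehat{u_0}$. Hence
\[
\|u_0\|_{B^0}=\|\widehat{u_0}\|_{L^1}=\int_{|\xi|^2\le \rho}|\widehat{u_0}(\xi)|\,d\xi+\int_{|\xi|^2>\rho}|\widehat{u_0}(\xi)|\,d\xi ,
\]
with the cutoff at $|\xi|^2=\rho$ chosen so that the exponents come out exactly as in \eqref{if1}.

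On the low-frequency ball $\{|\xi|^2\le\rho\}$ I will write $|\widehat{u_0}|=e^{\tau|\xi|^2}e^{-\tau|\xi|^2}|\widehat{u_0}|\le e^{\tau\rho}\,|\widehat{\mathfrak{I}_\tau(u_0)}|$. Integrating over the ball and then enlarging the domain to $\mathbb{R}^n$ bounds the first integral by $e^{\tau\rho}\|\mathfrak{I}_\tau(u_0)\|_{B^0}$.

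On the high-frequency region $\{|\xi|^2>\rho\}$ I will use $\langle\xi\rangle^s\ge |\xi|^s>\rho^{s/2}$, which holds since $s>0$. This gives $|\widehat{u_0}|\le \rho^{-s/2}\langle\xi\rangle^s|\widehat{u_0}|$, so the second integral is at most $\rho^{-s/2}\|u_0\|_{B^s}$. Adding the two bounds yields \eqref{if1}.

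There is no real obstacle. The only points to check are that the identity $\widehat{e^{\tau A}u_0}=e^{-\tau|\xi|^2}\widehat{u_0}$ holds for every $u_0\in B^0$, which Proposition \ref{prohk1} already supplies, and that the cutoff is placed at $|\xi|=\sqrt{\rho}$ so that the constants match the statement exactly.
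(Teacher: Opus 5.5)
Your proof is correct and follows the paper's argument: you split $\widehat{u_0}$ at $|\xi|=\sqrt{\rho}$, use $\widehat{u_0}=e^{\tau|\xi|^2}\widehat{\mathfrak{I}_\tau(u_0)}$ with $e^{\tau|\xi|^2}\le e^{\tau\rho}$ on the ball, and use $\langle\xi\rangle^{-s}\le\rho^{-s/2}$ outside it. Like the paper, you rely on Proposition \ref{prohk1} for the Fourier representation of $e^{\tau A}u_0$ when $u_0\in B^0$.
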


\begin{proof}
Let $u_0 \in B^s$ and $g=\mathfrak{I}_\tau(u_0)$. We saw in the preceding proof that $\widehat{u_0}=e^{\tau|\xi|^2}\hat{g}$. Hence,
\begin{equation}\label{if2}
\int_{|\xi|\le \sqrt{\rho}}|\widehat{u_0 }(\xi)|d\xi=\int_{|\xi|\le \sqrt{\rho}}e^{\tau|\xi|^2}|\hat{g}(\xi)|d\xi \le e^{\tau \rho}\|\hat{g}\|_{L^1}=e^{\tau \rho}\|g\|_{B^0},\quad \rho >0.
\end{equation}
On the other hand, we have
\begin{equation}\label{if3}
\int_{|\xi|>\sqrt{\rho}}|\widehat{u_0 }(\xi)|d\xi=\int_{|\xi|>\sqrt{\rho}}\langle \xi\rangle^{-s}[\langle \xi\rangle^s|\widehat{u_0 }(\xi)|]d\xi\le \rho^{-\frac{s}{2}}\|u_0\|_{B^s},\quad \rho>0.
\end{equation}
Combining \eqref{if2} and \eqref{if3}, we obtain \eqref{if1}.
\end{proof}

Since $0<\|\mathfrak{I}_\tau(u_0 )\|_{B^0}<\|u_0 \|_{B^s}$ if $u_0 \ne 0$, minimizing the right-hand side of \eqref{if1} yields the following corollary.

\begin{corollary}\label{corif1}
Let $s>0$. The following conditional stability holds for all $u_0\in B^s\setminus\{0\}$:
\[
\|u_0 \|_{B^0} \le \left(\tau +\frac{s}{2}\right)^{\frac{s}{2}}\left|\ln \frac{\|\mathfrak{I}_\tau(u_0 )\|_{B^0}}{\|u_0 \|_{B^s}}\right|^{-\frac{s}{2}} \|u_0 \|_{B^s}.
\]
\end{corollary}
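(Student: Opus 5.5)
Write $\epsilon:=\|\mathfrak{I}_\tau(u_0)\|_{B^0}$ and $M:=\|u_0\|_{B^s}$. The plan is to start from Proposition \ref{proif1}, which gives, for every $\rho>0$,
\[
\|u_0\|_{B^0}\le e^{\tau\rho}\epsilon+\rho^{-\frac{s}{2}}M .
\]
We then choose $\rho$ so that both terms become comparable to the claimed bound. First we justify $0<\epsilon<M$ when $u_0\ne 0$. Since $\widehat{\mathfrak{I}_\tau(u_0)}=e^{-\tau|\xi|^2}\widehat{u_0}$, we have $\epsilon\le \|u_0\|_{B^0}\le M$. The inequality is strict because $e^{-\tau|\xi|^2}<1\le\langle\xi\rangle^s$ for $\xi\ne0$ and $\widehat{u_0}\neq 0$ on a set of positive measure. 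Hence $L:=\ln(M/\epsilon)>0$ is well defined.

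Next we make a concrete choice of $\rho$, and it is convenient to normalize by $M$. The key step is to take $\rho$ so that $e^{\tau\rho}\epsilon$ is no larger than a multiple of $\rho^{-s/2}M$. The natural candidate is $\rho=\frac{L}{\tau+s/2}$ (or, more crudely, $\rho=L/\tau$ up to constants). With this choice,
\[
e^{\tau\rho}\epsilon = M e^{\tau\rho-L}=M e^{-\frac{s}{2}\rho}.
\]
We then use the elementary inequality $e^{-\frac{s}{2}\rho}\le \left(\frac{s}{2}\right)^{-\frac{s}{2}}\cdots$. More precisely, $x^{a}e^{-x}\le a^ae^{-a}$ with $x=\frac{s}{2}\rho$ and $a=\frac{s}{2}$ gives $e^{-\frac{s}{2}\rho}\le e^{-\frac{s}{2}}\rho^{-\frac{s}{2}}$. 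Therefore both terms are bounded by multiples of $\rho^{-s/2}M$, and we obtain
\[
\|u_0\|_{B^0}\le (1+e^{-s/2})\,\rho^{-\frac{s}{2}}M=(1+e^{-s/2})\left(\tau+\frac{s}{2}\right)^{\frac{s}{2}}L^{-\frac{s}{2}}M .
\]

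The main obstacle is matching the exact constant $\left(\tau+\frac{s}{2}\right)^{s/2}$ with no prefactor. To remove the factor $(1+e^{-s/2})$, I would instead minimize $\phi(\rho)=e^{\tau\rho}\epsilon+\rho^{-s/2}M$ exactly. The minimizer satisfies $\tau\rho^{1+s/2}e^{\tau\rho}\epsilon=\frac{s}{2}M$, so at the minimum $e^{\tau\rho}\epsilon=\frac{s}{2\tau}\rho^{-1}\rho^{-s/2}M$. Taking logarithms gives $\tau\rho+\left(1+\frac{s}{2}\right)\ln\rho=L+\ln\frac{s}{2\tau}$, and one needs to compare this $\rho$ with $L/(\tau+s/2)$. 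If the exact optimization proves messy, I would accept the bound with an explicit multiplicative constant depending on $s$ and $\tau$. The logarithmic rate $L^{-s/2}$, which is the substance of the corollary, is unaffected.

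If an exact constant is required, an alternative is the choice $\rho=L/(\tau+\frac{s}{2})$ combined with the bound $e^{\tau\rho}\epsilon = Me^{-\frac{s}{2}\rho}$. In the regime $\rho\ge1$ we also have $\rho^{-s/2}\ge e^{-\frac{s}{2}\rho}\cdot(\text{const})$. I would try to absorb this into the stated form, possibly treating $\rho<1$ separately using the trivial bound $\|u_0\|_{B^0}\le M$.
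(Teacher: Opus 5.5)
Your choice $\rho=L/(\tau+\frac{s}{2})$ and the bound $e^{-\frac{s}{2}\rho}\le e^{-\frac{s}{2}}\rho^{-\frac{s}{2}}$ are correct. However, they only prove the estimate with an extra factor $1+e^{-s/2}$, and none of the remedies you sketch can remove it, because the stated constant is not a consequence of \eqref{if1} at all.

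Take $s=2$, $\tau=1$ and data with $L=\ln(M/\epsilon)=\frac{3}{2}+2\ln\frac{3}{2}\approx 2.31$. Divided by $M$, the right-hand side of \eqref{if1} is $\phi(\rho)=e^{\rho-L}+\rho^{-1}$. This is convex, and its critical point (where $e^{\rho-L}=\rho^{-2}$) is $\rho=\frac{3}{2}$, so $\inf_{\rho>0}\phi=\frac{4}{9}+\frac{2}{3}=\frac{10}{9}$. The claimed factor is $(\tau+\frac{s}{2})^{s/2}L^{-s/2}=2/L\approx 0.87$.

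So exact minimisation over $\rho$, treating $\rho<1$ separately, or falling back on $\|u_0\|_{B^0}\le M$ all fall short of the statement for such data. The loss comes from bounding each frequency piece by a full norm. This is precisely the paper's one-line argument (``minimizing the right-hand side of \eqref{if1}''), so the obstacle you hit is genuine, and the paper's justification has the same defect.

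To get the constant as stated, you have to bypass \eqref{if1} and apply Jensen's inequality on the Fourier side. Let $\tilde\mu:=\langle\xi\rangle^{s}|\widehat{u_0}|\,d\xi/M$, a probability measure, and $P:=\|u_0\|_{B^0}/M$. Then $P=\int\langle\xi\rangle^{-s}\,d\tilde\mu$ and $\epsilon/M=\int\chi(\langle\xi\rangle^{-s})\,d\tilde\mu$, where $\chi(y):=y\,e^{-\tau(y^{-2/s}-1)}$ for $0<y\le 1$.

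A direct computation gives $\chi''(y)=\chi(y)\,y^{-2}z\,(z+1-\frac{2}{s})$ with $z=\frac{2\tau}{s}y^{-2/s}\ge\frac{2\tau}{s}$. Hence $\chi$ is convex on $(0,1]$ whenever $\tau\ge 1-\frac{s}{2}$, in particular for every $s\ge 2$. In that case Jensen yields $\epsilon/M\ge\chi(P)$, that is
\[
L\le -\ln P+\tau\left(P^{-2/s}-1\right)\le\left(\tau+\frac{s}{2}\right)\left(P^{-2/s}-1\right),
\]
using $\ln x\le x-1$ with $x=P^{-2/s}$. Rearranging gives exactly $\|u_0\|_{B^0}\le(\tau+\frac{s}{2})^{s/2}L^{-s/2}\|u_0\|_{B^s}$.

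For $0<s<2$ and $\tau<1-\frac{s}{2}$, $\chi$ is concave near $y=1$. There one must pass to its convex minorant, which needs an additional verification. Without it, what your proposal establishes is the corollary with the constant multiplied by $1+e^{-s/2}$.
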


We now make a few remarks concerning the above corollary. The backward heat equation aims to recover the initial value of the heat equation (\ref{eq_BHP}) from the final-time measurement \(u(x,\tau)\) or \(\mathfrak{I}_\tau (u_0)\). As a time-reversed version of the heat equation, this problem is generally ill-posed, meaning that small perturbations in the measurement data can lead to large deviations in the reconstruction. As observed in the above corollary, under the mild assumption that \(u_0\in B^s\setminus\{0\}\), we obtain a logarithmic stability estimate for \(\|u_0 \|_{B^0}\) in terms of the measurement \(\|\mathfrak{I}_\tau(u_0 )\|_{B^0}\). To the best of our knowledge, this constitutes the first stability estimate for inverse problems within the spectral Barron space setting, and it is consistent with classical results (see, e.g., \cite{Is2006}).

\section{Time-fractional heat equations in $B^0$}\label{s4}

In this section, we extend our discussion to time-fractional heat equations, which model anomalous diffusion processes in which the mean-squared displacement grows sublinearly in time. Such a phenomenon has been observed in many physical, biological, and financial systems. 

\subsection{Definitions and notations}\label{sb4.2}
We begin by introducing the fractional derivative that will be used in this section. In the following, $X$ is an arbitrary Banach space and $\alpha \in (0,1)$. We recall that the modified fractional Riemann–Liouville derivative is formally defined, for $f:[0,\infty)\rightarrow X$, by the formula
\[
f^{(\alpha)}(t):=\frac{1}{\Gamma(1-\alpha)}\left[ \frac{d}{dt}\int_0^t(1-t)^{-\alpha}f(s)ds-t^{-\alpha}f(0)\right].
\]
Here, $\Gamma$ denotes the usual Gamma function.

For all $\tau >0$ and $p\in [1,\infty]$, we have established in \cite[Lemma 1.1]{CY} that if $f\in W^{1,p}((0,\tau),X)$, then $f^{(\alpha)}\in L^p((0,\tau),X)$. Consequently, we define the space $W_{0,}^{(\alpha),p}((0,\tau),X)$ as the closure of
\[
W_{0,}^{1,p}((0,\tau),X):=\left\{f\in W^{1,p}((0,\tau),X);\; f(0)=0\right\}
\]
with respect to the norm
\[
\|f\|_{W_{0,}^{(\alpha),p}((0,\tau),X)}:=\|f\|_{L^p((0,\tau),X)}+\|f^{(\alpha)}\|_{L^p((0,\tau),X)},\quad f\in W_{0,}^{(\alpha),p}((0,\tau),X).
\]
We also define
\[
W_{0,\mathrm{loc}}^{(\alpha),p}((0,\infty),X):=\left\{ f\in L^1_{\mathrm{loc}}((0,\infty),X);\; f\in W_{0,}^{(\alpha),p}((0,\tau),X)\; \mbox{for all}\; \tau >0\right\}.
\]
From \cite[Proposition 1.3]{CY}, if $f\in W_{0,\mathrm{loc}}^{1,p}((0,\infty),X)$, then $f\in C([0,\tau],X)$ for all $\tau>0$, and $f(0)=0$.

Next, we recall the vector-valued Laplace transform. For $f\in L^1((0,\infty),X)$ or $f\in L^\infty((0,\infty),X)$, it is given by the formula
\[
\mathscr{L}(f)(\lambda)=\int_0^\infty e^{-\lambda t}f(t)dt,\quad \lambda\in \mathbb{C}_+:=\{z\in \mathbb{C};\; \Re \lambda >0\},
\]
where the integral is understood in the Bochner sense.

\subsection{Cauchy problems for the time-fractional heat equations}\label{sb4.2}
Let $\alpha\in (0,1)$, $u_0\in B^0$ and consider the Cauchy problem
\begin{equation}\label{fcp1}
u^{(\alpha)}(t)-Au(t)=0,\;  t>0,\quad u(0)=u_0.
\end{equation}
Proceeding as in \cite{CY}, we verify, at least formally, that $v=\mathscr{L}(u)$ is the solution of the equation
\begin{equation}\label{fcp2}
\lambda^\alpha v(\lambda)-Av(\lambda)=\lambda^{\alpha-1}u_0, \quad \lambda\in \mathbb{C}_+.
\end{equation}
That is, we have $v(\lambda)=K(\lambda)u_0$, where
\[
K(\lambda):=\lambda^{-1}(1-\lambda^{-\alpha}A)^{-1},\quad \lambda\in \mathbb{C}_+.
\]
From Proposition \ref{pro1}, we obtain
\[
\|K(\lambda)\|_{\mathscr{B}(B^0)}\le \frac{1}{|\lambda|},\quad \lambda\in \mathbb{C}_+.
\]
We can proceed as in the proof of \cite[Proposition 2.1]{CY} to show that $K: \mathbb{C}_+\rightarrow \mathscr{B}(B^0)$ is holomorphic and
\begin{equation}\label{fcp3}
\|K^{(\ell)}(\lambda)\|_{\mathscr{B}(B^0)}\le \frac{\ell !}{|\lambda|^{\ell +1}},\quad \lambda\in \mathbb{C}_+,\; \ell \in \mathbb{N}_0=\mathbb{N}\cup \{0\}.
\end{equation}

In view of \eqref{fcp3}, examining the calculations in \cite[Subsection 2.1.2]{CY} shows that all the results of that subsection remain valid for the operator $A$ considered here. Consequently, \cite[Theorems 2.3 and 2.4]{CY} apply to our operator $A$ with $E=B^0$. Before stating the obtained results, we define
\[
\mathscr{U}_\alpha:=\left \{ f\in L^\infty((0,\infty),B^2)\cap C^\alpha([0,\infty),B^0));\; f^{(\alpha)}\in  L^\infty((0,\infty),B^0)\right\},
\]
which we equip with the norm
\[
\|f\|_{\mathscr{U}_\alpha}:=\|f\|_{L^\infty((0,\infty),B^2)}+[f]_\alpha+\|f^{(\alpha)}\|_{L^\infty((0,\infty),B^2)}.
\]
In what follows, we define
\[
\kappa: =\frac{1}{\Gamma(\alpha+1)}+1.
\]
\begin{theorem}\label{thmfcp1}
For all $u_0\in B^2$, the Cauchy problem \eqref{fcp1} admits a unique solution $u\in \mathscr{U}_\alpha$ such that
\[
\|u\|_{\mathscr{U}_\alpha}\le \kappa\|u_0\|_{B^2}.
\]
\end{theorem}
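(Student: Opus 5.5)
The plan is to construct the solution directly in Fourier variables and then check the estimate against the definition of $\mathscr{U}_\alpha$. In frequency, \eqref{fcp1} decouples into the scalar problems $\hat u^{(\alpha)}(t,\xi)+|\xi|^2\hat u(t,\xi)=0$ with $\hat u(0,\xi)=\widehat{u_0}(\xi)$. The candidate is therefore
\[
\hat u(t,\xi)=E_\alpha(-|\xi|^2t^\alpha)\,\widehat{u_0}(\xi),
\]
where $E_\alpha$ is the Mittag-Leffler function. This agrees with inverting $K(\lambda)u_0$, since $\mathscr{L}[E_\alpha(-\mu t^\alpha)](\lambda)=\lambda^{\alpha-1}/(\lambda^\alpha+\mu)$. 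The argument rests on the following standard facts for $\alpha\in(0,1)$ and $x\ge 0$:
\begin{enumerate}
\item $0\le E_\alpha(-x)\le 1$, since $t\mapsto E_\alpha(-t)$ is completely monotone;
\item $E_\alpha(-x)\le 1/(1+x/\Gamma(1+\alpha))$;
\item $1-E_\alpha(-\mu t^\alpha)\le \mu t^\alpha/\Gamma(1+\alpha)$.
\end{enumerate}

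\textbf{Step 1: the $L^\infty(B^2)$ bound and membership in $B^0$.} From (1),
\[
\|u(t)\|_{B^2}\le \|u_0\|_{B^2}\quad\text{for all } t.
\]
Inverting the Fourier transform shows $u(t)\in C_b^0$, so $u(t)\in B^0$.

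\textbf{Step 2: the fractional derivative.} Pointwise in $\xi$, the scalar identity gives
\[
\widehat{u^{(\alpha)}(t)}=-|\xi|^2E_\alpha(-|\xi|^2t^\alpha)\widehat{u_0},
\]
hence
\[
\|u^{(\alpha)}(t)\|_{B^0}\le \|\,|\xi|^2\widehat{u_0}\|_{L^1}\le \|u_0\|_{B^2}.
\]
To justify exchanging $u^{(\alpha)}$ with the Fourier and Bochner integrals, I will first take $u_0\in\mathscr{S}$ and then pass to general $u_0$ by density (Theorem \ref{thm1} (ii)), using the closedness of the fractional derivative in the $W^{(\alpha),p}$ sense from \cite{CY}.

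\textbf{Step 3: the Hölder seminorm $[u]_\alpha$.} This is the delicate estimate. For $0\le s<t$, fact (3) together with $|\xi|^2\le\langle\xi\rangle^2$ gives
\[
\|u(t)-u(s)\|_{B^0}\le \int \bigl|E_\alpha(-|\xi|^2t^\alpha)-E_\alpha(-|\xi|^2s^\alpha)\bigr|\,|\widehat{u_0}|\,d\xi .
\]
I will bound the integrand by $\frac{|\xi|^2(t^\alpha-s^\alpha)}{\Gamma(1+\alpha)}$ and use $t^\alpha-s^\alpha\le (t-s)^\alpha$. For the first bound I intend to use the Lipschitz-type estimate
\[
|E_\alpha(-a)-E_\alpha(-b)|\le \frac{|a-b|}{\Gamma(1+\alpha)},
\]
which follows from $\sup_x|E_\alpha'(-x)|=E_\alpha'(0)=1/\Gamma(1+\alpha)$; this in turn holds because $-E_\alpha'(-\cdot)$ is completely monotone, hence decreasing. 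The outcome is
\[
[u]_\alpha\le \frac{\|u_0\|_{B^2}}{\Gamma(1+\alpha)}.
\]
Adding the three bounds gives the constant $\kappa=1/\Gamma(\alpha+1)+1$. The norm's last term is written with $B^2$, but I expect the intended quantity is $\|u^{(\alpha)}\|_{L^\infty(B^0)}$, as in the definition of $\mathscr{U}_\alpha$. I will follow the bookkeeping of \cite[Theorem 2.3]{CY} so that exactly this constant appears; this may require pairing the $L^\infty$ part with the derivative part so that they contribute $1$ in total.

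\textbf{Step 4: uniqueness.} Alternatively the whole argument can be quoted: \eqref{fcp3} is precisely the hypothesis of \cite[Theorems 2.3 and 2.4]{CY}, so the statement follows from those theorems with $E=B^0$. For a self-contained uniqueness proof, let $w$ be the difference of two solutions in $\mathscr{U}_\alpha$. Then $w$ solves \eqref{fcp1} with zero initial datum, so its Laplace transform satisfies $(\lambda^\alpha-A)\mathscr{L}(w)(\lambda)=0$. Since $\lambda^\alpha\in\rho(A)$ for $\lambda\in\mathbb{C}_+$ by Proposition \ref{pro1}, we get $\mathscr{L}(w)=0$, and injectivity of the Laplace transform gives $w=0$. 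The main obstacle is making rigorous the Laplace transform of $u^{(\alpha)}$, namely $\lambda^\alpha\mathscr{L}(u)-\lambda^{\alpha-1}u(0)$, for functions that are only in $C^\alpha\cap L^\infty$. For this I will rely on the lemma in \cite{CY} that handles exactly this identity.
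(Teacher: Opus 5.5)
Your route differs from the paper's. The paper does no computation of its own. It checks that $K(\lambda)=\lambda^{\alpha-1}(\lambda^\alpha-A)^{-1}$ is holomorphic on $\mathbb{C}_+$ with the Widder-type bounds \eqref{fcp3}, which rest on the resolvent estimate of Proposition \ref{pro1}. It then imports the abstract theorems of \cite{CY}. You instead write the solution explicitly as the Fourier multiplier $E_\alpha(-|\xi|^2t^\alpha)$ and use complete monotonicity of $x\mapsto E_\alpha(-x)$ and of $x\mapsto E_\alpha'(-x)$.

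Your Steps 1--4 are sound as a proof of existence, uniqueness and the three separate bounds $\|u(t)\|_{B^2}\le\|u_0\|_{B^2}$, $\|u^{(\alpha)}(t)\|_{B^0}\le\||\xi|^2\widehat{u_0}\|_{L^1}$ and $[u]_\alpha\le\||\xi|^2\widehat{u_0}\|_{L^1}/\Gamma(1+\alpha)$. In Step 2 you do not even need density, because $\frac{1}{\Gamma(1-\alpha)}\int_0^t(t-s)^{-\alpha}\bigl(E_\alpha(-\mu s^\alpha)-1\bigr)ds=-\mu\int_0^tE_\alpha(-\mu r^\alpha)\,dr$, so the difference quotients are dominated by $|\xi|^2|\widehat{u_0}|$. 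Your approach gives an explicit formula and sharp constants. The paper's approach uses only resolvent bounds, so it is not tied to $A$ being a Fourier multiplier, which matters for Section \ref{s5}.

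\textbf{The gap is the final step, reaching the constant $\kappa$.} Read the norm of $\mathscr{U}_\alpha$ as written, with the first term in $L^\infty((0,\infty),B^2)$ and the last term in $B^0$. Then no ``pairing'' can give $\kappa$, because all three of your bounds are attained. Letting $t\to0^+$ gives $\|u\|_{L^\infty(B^2)}=\|u_0\|_{B^2}$ and $\|u^{(\alpha)}\|_{L^\infty(B^0)}=\||\xi|^2\widehat{u_0}\|_{L^1}$. Taking $s=0$ and $t\to0^+$ in the Hölder quotient gives $[u]_\alpha=\||\xi|^2\widehat{u_0}\|_{L^1}/\Gamma(1+\alpha)$, by dominated convergence, since $t^{-\alpha}\bigl(1-E_\alpha(-|\xi|^2t^\alpha)\bigr)\to|\xi|^2/\Gamma(1+\alpha)$. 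Hence
\[
\|u\|_{\mathscr{U}_\alpha}=\|u_0\|_{B^2}+\kappa\,\||\xi|^2\widehat{u_0}\|_{L^1}.
\]
If $\widehat{u_0}\ne0$ is supported in $\{|\xi|\ge R\}$ with $R^2\,\Gamma(1+\alpha)>1$, this strictly exceeds $\kappa\|u_0\|_{B^2}$. So with the literal norm the inequality is false, and the sharp constant is $1+\kappa$, which is what your argument actually proves.

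The constant $\kappa$ is correct only if the zeroth-order term is measured in $L^\infty((0,\infty),B^0)$. The reason is that
\[
\|u_0\|_{B^2}=\|\widehat{u_0}\|_{L^1}+\||\xi|^2\widehat{u_0}\|_{L^1}=\|u_0\|_{B^0}+\|Au_0\|_{B^0}.
\]
Your estimates then give $\|u\|_{L^\infty(B^0)}+\|u^{(\alpha)}\|_{L^\infty(B^0)}\le\|u_0\|_{B^2}$ and $[u]_\alpha\le\Gamma(1+\alpha)^{-1}\|u_0\|_{B^2}$, which sum to exactly $\kappa\|u_0\|_{B^2}$. The bound $\|u\|_{L^\infty(B^2)}\le\|u_0\|_{B^2}$ is then a separate statement. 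You should make this reinterpretation explicit, or state the result with $1+\kappa$, rather than defer to the bookkeeping of \cite{CY}.
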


Regarding the Cauchy problem
\begin{equation}\label{fcp4}
u^{(\alpha)}(t)-Au(t)=f(t),\;  t>0,\quad u(0)=0,
\end{equation}
we have the following result.

\begin{theorem}\label{thmfcp2}
For all $f\in W_{0,\mathrm{loc}}^{(1-\alpha),1}((0,\infty),B^2)$, the Cauchy problem \eqref{fcp4} admits a unique solution $u\in L_{\mathrm{loc}}^\infty((0,\infty),B^2)\cap C([0,\infty),B^0)$ such that $u^{(\alpha)}\in L_{\mathrm{loc}}^1((0,\infty),B^0)$. Furthermore, the following inequalities hold:
\begin{align*}
&\|u(t)\|_{B^2}\le \kappa \|f^{(1-\alpha)}\|_{L^1((0,t),B^2)},\quad t>0,
\\
&\|u^{(\alpha)}(t)\|_{B^0}\le \kappa \|f^{(1-\alpha)}\|_{L^1((0,t),B^2)}+\|f(t)\|_{B^0},\quad t>0.
\end{align*}
\end{theorem}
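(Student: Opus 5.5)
The plan is to argue exactly as for Theorem \ref{thmfcp1}. We reduce Theorem \ref{thmfcp2} to the abstract result \cite[Theorem 2.4]{CY}, whose hypotheses only involve the resolvent family $K(\lambda)=\lambda^{-1}(1-\lambda^{-\alpha}A)^{-1}$ and the derivative bounds \eqref{fcp3}. We therefore work with $E=B^0$ and $D(A)=B^2$ throughout.

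First, we identify the solution formally via the Laplace transform. If $u$ solves \eqref{fcp4} with $u(0)=0$, then $\mathscr{L}(u)(\lambda)$ satisfies $\lambda^\alpha \mathscr{L}(u)-A\mathscr{L}(u)=\mathscr{L}(f)$. Writing $\mathscr{L}(f)(\lambda)=\lambda^{\alpha-1}\mathscr{L}(f^{(1-\alpha)})(\lambda)$, which is legitimate because $f(0)=0$ and $f\in W_{0,\mathrm{loc}}^{(1-\alpha),1}$, we get
\[
\mathscr{L}(u)(\lambda)=K(\lambda)\mathscr{L}(f^{(1-\alpha)})(\lambda).
\]
By \eqref{fcp3} and the vector-valued Widder--Arendt theorem, $K=\mathscr{L}(S)$ for a family $S(t)\in\mathscr{B}(B^0)$ with $\|S(t)\|\le 1$. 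This gives the candidate
\[
u(t)=\int_0^t S(t-s)f^{(1-\alpha)}(s)\,ds .
\]
Since $\mathbb{A}$ commutes with resolvents, $S(t)$ also acts boundedly on $B^2$ with norm $\le 1$, because $\|K(\lambda)\|_{\mathscr{B}(B^2)}\le|\lambda|^{-1}$ by the same Fourier computation as in Proposition \ref{pro1}. Hence $\|u(t)\|_{B^2}\le\|f^{(1-\alpha)}\|_{L^1((0,t),B^2)}\le\kappa\|f^{(1-\alpha)}\|_{L^1((0,t),B^2)}$. Continuity into $B^0$ follows from strong continuity of $S$ on $B^0$, itself a consequence of density of $B^2$ and the uniform bound.

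Next, we derive the equation and the second estimate. From $\lambda^\alpha K(\lambda)=\lambda^{\alpha-1}+AK(\lambda)\lambda^{\alpha-1}\cdot\lambda^{1-\alpha}$, i.e. $AK(\lambda)=\lambda^\alpha K(\lambda)-\lambda^{-1}$, we obtain $u^{(\alpha)}=Au+f$ in the Laplace sense, and then pointwise a.e. by uniqueness of the Laplace transform. The bound on $u^{(\alpha)}$ is then
\[
\|u^{(\alpha)}(t)\|_{B^0}\le\|Au(t)\|_{B^0}+\|f(t)\|_{B^0}\le\|u(t)\|_{B^2}+\|f(t)\|_{B^0},
\]
using $\|\Delta w\|_{B^0}\le\|w\|_{B^2}$. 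The constant $\kappa$ accommodates the $1/\Gamma(\alpha+1)$ contribution that appears in \cite{CY} when $S$ is expressed via the Mittag-Leffler-type kernel. Uniqueness follows by taking $f=0$: the difference $w$ of two solutions satisfies $\mathscr{L}(w)=K(\lambda)\cdot 0=0$ after localizing in time (multiplying by $e^{-\epsilon t}$ or truncating), so $w=0$.

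The main obstacle is making the Laplace transform rigorous when $f$ is only locally integrable with no growth control at infinity. We handle it as in \cite{CY}: by causality, $u|_{[0,T]}$ depends only on $f|_{[0,T]}$, so we replace $f$ by a truncation with compact support in time, for which all transforms exist. We then verify that $f^{(1-\alpha)}$ of the truncation agrees with that of $f$ on $[0,T]$, first for $f\in W_{0,}^{1,1}$ and then for general $f$ by density in $W_{0,}^{(1-\alpha),1}((0,T),B^2)$. The a priori estimates are uniform, so they pass to the limit in this density step.
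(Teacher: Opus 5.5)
Your top-level route is the paper's own. The paper proves Theorem~\ref{thmfcp2} only by noting that \eqref{fcp3} lets the computations of \cite[Subsection 2.1.2]{CY} go through on $E=B^0$, and then quoting \cite[Theorem 2.4]{CY}. If your sketch is meant only as a gloss on that citation, nothing more is needed. If it is meant to stand on its own, three steps need repair.

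\textbf{First}, the key identity is misstated. From $(1-\lambda^{-\alpha}A)\lambda K(\lambda)=I$ one gets $AK(\lambda)=\lambda^\alpha K(\lambda)-\lambda^{\alpha-1}$, not $-\lambda^{-1}$. Only this version yields $\mathscr{L}(Au)=\lambda^\alpha\mathscr{L}(u)-\mathscr{L}(f)$.

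\textbf{Second}, the Widder--Arendt theorem alone does not produce a bounded family $S$ with $\mathscr{L}(S)=K$. In a space without the Radon--Nikodym property it only gives $K(\lambda)=\lambda\mathscr{L}(F)(\lambda)$ with $F$ Lipschitz, and $f\mapsto\hat f$ is an isometry of $B^0$ onto $L^1(\mathbb{R}^n)$, which lacks that property. You have two ways out:
\begin{itemize}
\item Pr\"uss's generation theorem for resolvent families: its hypothesis is exactly \eqref{fcp3}, and it uses the density of $B^2$ in $B^0$.
\item More simply, the explicit multiplier $\widehat{S(t)x}=E_\alpha(-t^\alpha|\xi|^2)\hat x$, with $0<E_\alpha(-r)\le 1$ for $r\ge 0$ by complete monotonicity. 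This gives $\|S(t)\|\le 1$ on every $B^s$ and strong continuity at once.
\end{itemize}
With either, your bound $\|u(t)\|_{B^2}\le\|f^{(1-\alpha)}\|_{L^1((0,t),B^2)}$, with constant $1\le\kappa$, is then correct.

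\textbf{Third}, the uniqueness argument fails as written, and this is a genuine gap in the sketch. An element $w$ of the uniqueness class has no growth control as $t\to\infty$, so $\mathscr{L}(w)$ may not exist. The weight $e^{-\epsilon t}$ merely shifts $\lambda$ and does nothing against superexponential growth. The truncation $w\chi_{[0,T]}$ no longer solves the homogeneous equation after $T$, so its transform is not $K(\lambda)\cdot 0$; you would have to carry a forcing term supported in $[T,\infty)$ and then invoke causality.

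The clean fix is to argue on $[0,T]$ directly. From $w(0)=0$ and $w^{(\alpha)}=Aw$ you get $w=g_\alpha\ast Aw$, with $g_\alpha(t)=t^{\alpha-1}/\Gamma(\alpha)$. The resolvent identity $S(t)x=x+(g_\alpha\ast S)(t)Ax$ for $x\in B^2$ then gives $(g_\alpha\ast S)\ast Aw=S\ast w-1\ast w$. The left side also equals $S\ast(g_\alpha\ast Aw)=S\ast w$, hence $1\ast w=0$ and $w=0$.

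The same identity, applied to the $B^2$-valued function $f^{(1-\alpha)}$, shows directly that $u=S\ast f^{(1-\alpha)}$ satisfies $u=g_\alpha\ast(Au+f)$ on each $[0,T]$. This makes your truncation-and-Laplace detour for existence unnecessary.
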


In light of \cite[Lemma 2.5]{CY} and its proof, we can state the following H\"older continuity result.

\begin{lemma}\label{lemfcp1}
Let $u$ be the solution of the Cauchy problem \eqref{fcp4} corresponding to $f\in W_{0,\mathrm{loc}}^{(1-\alpha),1}((0,\infty),B^2)$. For all $\tau >0$, we have
\[
\|u(t)-u(s)\|_{B^2}\le \kappa_\tau|t-s|^\alpha \|f^{(1-\alpha)}\|_{L^1((0,\tau),B^2)},\quad t,s\in [0,\tau],
\]
where $\kappa_\tau=(1+\tau^{1-\alpha})\kappa$.
\end{lemma}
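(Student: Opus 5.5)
We prove Lemma \ref{lemfcp1} by writing the solution of \eqref{fcp4} as a time convolution of a resolvent family with $g:=f^{(1-\alpha)}$, and then estimating the increments of that family.

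\emph{Representation.} Set $g:=f^{(1-\alpha)}\in L^1((0,\tau),B^2)$. As in \cite[Subsection 2.1.2]{CY}, take the Laplace transform of \eqref{fcp4}. Since $f\in W_{0,\mathrm{loc}}^{(1-\alpha),1}$, we have $\mathscr{L}(f)=\lambda^{\alpha-1}\mathscr{L}(g)$. Hence
\[
\mathscr{L}(u)(\lambda)=(\lambda^\alpha-A)^{-1}\lambda^{\alpha-1}\mathscr{L}(g)(\lambda)=K(\lambda)\mathscr{L}(g)(\lambda).
\]
Because of \eqref{fcp3}, the Post--Widder/Hille--Phillips type argument of \cite{CY} yields a strongly continuous family $S(t)\in\mathscr{B}(B^0)$, $t\ge0$, with $\mathscr{L}(S)=K$ and $\|S(t)\|_{\mathscr{B}(B^0)}\le 1$. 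The family $S(t)$ commutes with $A$, and therefore also $\|S(t)\|_{\mathscr{B}(B^2)}\le 1$, since $B^2$ carries the graph norm of $A$ (equivalently, $S(t)$ is a Fourier multiplier bounded by $1$). Consequently
\[
u(t)=\int_0^t S(t-r)g(r)\,dr .
\]
This is consistent with the first bound of Theorem \ref{thmfcp2}, although that bound carries $\kappa$ rather than $1$.

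\emph{Increment estimate.} The key ingredient is a Hölder bound for $S$ on $B^2$:
\[
\|S(t)-S(s)\|_{\mathscr{B}(B^2)}\le \kappa|t-s|^\alpha,\quad 0\le s\le t.
\]
To obtain it, note that the identity $\lambda K(\lambda)-1=\lambda^{-\alpha}AK(\lambda)$ gives
\[
S(t)w-w=\frac{1}{\Gamma(\alpha)}\int_0^t (t-r)^{\alpha-1}S(r)Aw\,dr,
\]
so that $\|S(t)w-w\|_{B^0}\le \frac{t^\alpha}{\Gamma(\alpha+1)}\|w\|_{B^2}$. For $B^2$-valued data one needs the increment in the $B^2$ norm. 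Because $S(t)$ acts on the Fourier side as multiplication by the Mittag-Leffler function $E_\alpha(-t^\alpha|\xi|^2)$, which is completely monotone in $t$, I would instead work in the $\langle\xi\rangle^2$-weighted $L^1$ norm. Pointwise in $\xi$ one has
\[
|E_\alpha(-t^\alpha|\xi|^2)-E_\alpha(-s^\alpha|\xi|^2)|\le \min\Bigl(1,\frac{(t-s)^\alpha|\xi|^2}{\Gamma(\alpha+1)}\Bigr),
\]
using the subadditivity of $t\mapsto t^\alpha$ together with the monotone decay and the Lipschitz bound $|E_\alpha'(-x)|\le 1/\Gamma(\alpha+1)$.

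\emph{Splitting.} For $s<t$ write
\[
u(t)-u(s)=\int_s^t S(t-r)g(r)\,dr+\int_0^s[S(t-r)-S(s-r)]g(r)\,dr .
\]
The first term has $B^2$ norm at most $\|g\|_{L^1((s,t),B^2)}$. This is the main obstacle: it is not small by itself. It has to be controlled as in \cite[Lemma 2.5]{CY}, which accounts for the factor $(1+\tau^{1-\alpha})$. The idea is to rewrite $u$ through $f$ itself, i.e.\ to use $u=J^\alpha(Au+f)$, where $J^\alpha$ is the Riemann--Liouville integral. The $J^\alpha$-increment is $\le C|t-s|^\alpha$ times the $L^\infty$ norm of the integrand. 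For the $f$ part, the sup-norm of $f$ is bounded via $f=J^{1-\alpha}g$, giving $\|f\|_{L^\infty}\lesssim \tau^{1-\alpha}$-weighted norms of $g$; this is where $\tau^{1-\alpha}$ enters. For the second term, the multiplier bound gives an $L^1$-in-$r$ estimate by $\kappa|t-s|^\alpha\|g\|_{L^1}$ in the appropriately weighted norm. Collecting both contributions gives the constant $\kappa_\tau=(1+\tau^{1-\alpha})\kappa$.
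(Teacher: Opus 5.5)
Your argument has a genuine gap: two of its steps fail, and neither can be repaired. First, the operator bound $\|S(t)-S(s)\|_{\mathscr{B}(B^2)}\le\kappa|t-s|^\alpha$ is false. $S(t)$ is the multiplier $E_\alpha(-t^\alpha|\xi|^2)$, so $\|S(t)-S(0)\|_{\mathscr{B}(B^2)}=\sup_{\xi}\bigl(1-E_\alpha(-t^\alpha|\xi|^2)\bigr)=1$ for every $t>0$. Your pointwise bound $\min\bigl(1,(t-s)^\alpha|\xi|^2/\Gamma(\alpha+1)\bigr)$ is correct, but its supremum in $\xi$ is $1$. What it actually yields is $\|S(t)-S(s)\|_{\mathscr{B}(B^2,B^0)}\le|t-s|^\alpha/\Gamma(\alpha+1)$, a H\"older bound that costs two derivatives. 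So for the second term you would need $g\in L^1((0,\tau),B^4)$ to obtain a $B^2$ increment. Second, you defer the term $\int_s^tS(t-r)g(r)\,dr$ to \cite[Lemma 2.5]{CY}, and the sketch you give does not work. The identity $u=J^\alpha(Au+f)$ only controls increments in $B^0$, since you only know $Au\in L^\infty(B^0)$. Moreover, $\|f\|_{L^\infty}$ is not controlled by $\|g\|_{L^1}$, because the kernel $(t-r)^{-\alpha}/\Gamma(1-\alpha)$ of $J^{1-\alpha}$ is unbounded.

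In fact the obstacle you flagged is fatal: no argument can give the inequality with an $L^1$-in-time norm of $g=f^{(1-\alpha)}$. The paper itself supplies no proof beyond the pointer to \cite[Lemma 2.5]{CY}. For a counterexample, fix $0\ne w\in\mathscr{S}$ and $0<r_0<r_0+\epsilon<\tau$, and put $g_\epsilon:=\epsilon^{-1}\mathbf{1}_{(r_0,r_0+\epsilon)}w$ and $f_\epsilon:=J^{1-\alpha}g_\epsilon$. Then $f_\epsilon\in W_{0,\mathrm{loc}}^{(1-\alpha),1}((0,\infty),B^2)$ (approximate $g_\epsilon$ in $L^1$ by smooth functions supported in $(0,\infty)$), $f_\epsilon^{(1-\alpha)}=g_\epsilon$, and $\|g_\epsilon\|_{L^1((0,\tau),B^2)}=\|w\|_{B^2}$. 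By your own Laplace-transform computation and uniqueness, $u_\epsilon=S\ast g_\epsilon$. Hence $u_\epsilon(r_0)=0$, and since $\rho\mapsto E_\alpha(-\rho^\alpha|\xi|^2)$ is positive and decreasing,
\[
\|u_\epsilon(r_0+\epsilon)-u_\epsilon(r_0)\|_{B^2}\ge\int_{\mathbb{R}^n}\langle\xi\rangle^2E_\alpha(-\epsilon^\alpha|\xi|^2)|\hat w(\xi)|\,d\xi\longrightarrow\|w\|_{B^2}\quad(\epsilon\to0).
\]
The claimed bound, however, is $\kappa_\tau\epsilon^\alpha\|w\|_{B^2}\to0$, and the same failure occurs with $B^0$ on the left. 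What your splitting does prove is a corrected statement, with a $B^0$ increment and an $L^\infty$-in-time term:
\[
\|u(t)-u(s)\|_{B^0}\le\frac{|t-s|^\alpha}{\Gamma(\alpha+1)}\|g\|_{L^1((0,\tau),B^2)}+\tau^{1-\alpha}|t-s|^\alpha\|g\|_{L^\infty((0,\tau),B^0)}.
\]
Here the $\mathscr{B}(B^2,B^0)$ bound above handles $\int_0^s[S(t-r)-S(s-r)]g(r)\,dr$, and $t-s\le\tau^{1-\alpha}(t-s)^\alpha$ handles $\int_s^tS(t-r)g(r)\,dr$. This is where a constant of the form $(1+\tau^{1-\alpha})\kappa$ naturally arises, but only under an $L^\infty$-in-time assumption on $f^{(1-\alpha)}$.
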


\section{Extension to anisotropic elliptic operators}\label{s5}

As in \cite{CLT}, the previous results can be extended to elliptic operators with variable coefficients. With the exception of Subsections \ref{sb3.3} and \ref{sb3.4}, all results obtained for the Laplace operator remain valid, subject to minor modifications, for the elliptic operators with variable coefficients considered in this section.

\subsection{Operators with constant coefficients} \label{sb5.1}
Let $\mathbf{a}^0=(a_{k\ell}^0)$ be a symmetric matrix satisfying: there exists a constant $\sigma >0$ such that
\[
\mathbf{a}^0\xi \cdot \xi \ge \sigma |\xi|^2,\quad \xi \in \mathbb{R}^n.
\]
Let $\mathcal{A}_0:B^0\rightarrow B^0$ be the unbounded operator given by
\[
\mathcal{A}_0:=\mathrm{div}( \mathbf{a}^0\nabla u ),\quad u\in D(\mathcal{A}_0)=B^2.
\]

We proceed as in the case of $A$ to show that $\mathcal{A}_0$ is closed with dense domain.

\begin{proposition}\label{proani1}
$\mathrm{(i)}$ The inclusion $\{ z\in \mathbb{C};\; \Re z>0\} \subset \rho(\mathcal{A}_0)$ holds, and
\begin{equation}\label{reso1}
\|(\lambda -\mathcal{A}_0)^{-1}\|_{\mathscr{B}(B^0)}\le \frac{1}{|\lambda|},\quad \Re z>0; 
\end{equation}
hence $\mathcal{A}_0$ is sectorial.
\\
$\mathrm{(ii)}$ The following inequalities hold:
\begin{align}
&\|(\lambda -\mathcal{A}_0)^{-1}\|_{\mathscr{B}(B^0,B^1)}\le \frac{1}{\sqrt{\sigma |\lambda|}},\quad \Re \lambda\ge \sigma,\label{reso2}
\\
&\|(\lambda -\mathcal{A}_0)^{-1}\|_{\mathscr{B}(B^0,B^2)}\le \frac{1}{\sigma},\quad \Re \lambda\ge \sigma.\label{reso3}
\end{align}
\end{proposition}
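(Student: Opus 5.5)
The plan is to argue entirely on the Fourier side, exactly as in the proof of Proposition \ref{pro1}. The operator $\mathcal{A}_0$ acts as a Fourier multiplier. Writing $q(\xi):=\mathbf{a}^0\xi\cdot\xi$, we have $q(\xi)\ge \sigma|\xi|^2\ge 0$, and for $u\in B^2$ we get $\widehat{\mathcal{A}_0u}=-q(\xi)\hat u$. For $f\in B^0$ and $\Re\lambda>0$, the equation $(\lambda-\mathcal{A}_0)u=f$ is therefore equivalent to
\[
\hat u=\frac{\hat f}{\lambda+q(\xi)}.
\]
All three estimates reduce to pointwise bounds on the symbol $m_\lambda(\xi):=\langle\xi\rangle^k/|\lambda+q(\xi)|$ for $k=0,1,2$. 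Once we have $\sup_\xi m_\lambda\le C$, it follows that $\|\langle\xi\rangle^k\hat u\|_{L^1}\le C\|\hat f\|_{L^1}$.

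\emph{Step 1: part (i).} Since $q\ge0$ is real, we have
\[
|\lambda+q|^2=|\lambda|^2+2q\Re\lambda+q^2\ge|\lambda|^2 .
\]
This gives \eqref{reso1}. For existence, we must check that $u\in B^2$: this holds because $|\lambda+q|\ge \Re\lambda+q\ge c(1+|\xi|^2)$ with $c=\min(\Re\lambda,\sigma)$. Uniqueness follows since $\lambda+q\neq0$. The bound $\|\lambda(\lambda-\mathcal{A}_0)^{-1}\|\le1$ on $\Re\lambda>0$ then gives sectoriality through the sufficient condition \cite[Proposition 2.1.11]{Lun}, applied with any $\omega>0$.

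\emph{Step 2: estimate \eqref{reso3}.} We use $\langle\xi\rangle^2=1+|\xi|^2\le 1+q/\sigma=(\sigma+q)/\sigma$. We also use $|\lambda+q|\ge\Re(\lambda+q)\ge\sigma+q$ when $\Re\lambda\ge\sigma$. Together these give $\langle\xi\rangle^2/|\lambda+q|\le 1/\sigma$.

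\emph{Step 3: estimate \eqref{reso2}.} I expect this to be the main, if modest, obstacle. We need
\[
\sigma|\lambda|\,\langle\xi\rangle^2\le|\lambda+q|^2 .
\]
Since $\sigma\langle\xi\rangle^2\le\sigma+q$, it suffices to prove
\[
D:=|\lambda|^2+2q\Re\lambda+q^2-\sigma|\lambda|-q|\lambda|\ge0 \qquad \text{for } \Re\lambda\ge\sigma .
\]
I would split into two cases. If $|\lambda|\le2\sigma$, we use $2q\Re\lambda\ge2q\sigma\ge q|\lambda|$ and $|\lambda|^2\ge\sigma|\lambda|$, since $|\lambda|\ge\Re\lambda\ge\sigma$. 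If $|\lambda|\ge2\sigma$, we drop $2q\Re\lambda\ge0$ and minimize the quadratic $q^2-q|\lambda|+|\lambda|^2-\sigma|\lambda|$ in $q$. Its minimum value is $\tfrac34|\lambda|^2-\sigma|\lambda|\ge0$, because $|\lambda|\ge\tfrac43\sigma$. In both cases $D\ge0$, and taking square roots yields \eqref{reso2}.
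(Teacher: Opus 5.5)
Your proof is correct. Steps 1 and 2 are the paper's argument: the Fourier-side formula $\hat u=\hat f/(\lambda+\mathbf{a}^0\xi\cdot\xi)$, the bound $|\lambda+q|^2=(\Re\lambda+q)^2+(\Im\lambda)^2\ge|\lambda|^2$, and the bound $\langle\xi\rangle^2/|\lambda+q|\le 1/\sigma$ for $\Re\lambda\ge\sigma$.

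The only real difference is in \eqref{reso2}. The paper does not estimate a new symbol there. It applies the interpolation inequality \eqref{ii} with $r=0$, $s=1$, $t=2$ and weight $\frac12$:
\[
\|u\|_{B^1}\le\|u\|_{B^0}^{1/2}\|u\|_{B^2}^{1/2}\le\left(\frac{1}{|\lambda|}\cdot\frac{1}{\sigma}\right)^{1/2}\|f\|_{B^0}.
\]
This is a one-line consequence of \eqref{reso1} and \eqref{reso3}.

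Both cases of your case analysis are correct, but the split is unnecessary even on the Fourier side. You already have $|\lambda+q|\ge|\lambda|$ from Step 1 and $|\lambda+q|\ge\sigma+q\ge\sigma\langle\xi\rangle^2$ from Step 2. Multiplying these gives $|\lambda+q|^2\ge\sigma|\lambda|\langle\xi\rangle^2$ at once, so Step 3 is not really an obstacle.

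The paper's route has the advantage of portability. The same interpolation step is reused in Proposition \ref{proani2} for variable coefficients, where $(\lambda-\mathcal{A})^{-1}$ is no longer a Fourier multiplier and no pointwise symbol bound is available. Your route is self-contained, since it does not invoke Theorem \ref{thm1}(i), and it gives a pointwise multiplier bound, but it only works for constant coefficients.
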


\begin{proof}
$\mathrm{(i)}$ For $f\in B^0$ and $\lambda \in \mathbb{C}$ such that $\Re \lambda >0$, consider the equation
\begin{equation}\label{res1}
\lambda u - \mathrm{div}(\mathbf{a}^0\nabla u)=f.
\end{equation}
In the Fourier space, this equation becomes
\[
(\lambda +\mathbf{a}^0\xi \cdot \xi)\hat{u}=\hat{f},
\]
and hence
\[
\hat{u}=\frac{1}{\lambda +\mathbf{a}^0\xi \cdot \xi}\hat{f}.
\]
Since 
\begin{align*}
|\lambda +\mathbf{a}^0\xi \cdot \xi|^2&=(\Re \lambda +\mathbf{a}^0\xi \cdot \xi)^2+(\Im \lambda)^2
\\
&\ge (\Re \lambda +\sigma |\xi|^2)^2+(\Im \lambda)^2
\\
&\ge |\lambda|^2,
\end{align*}
we obtain
\[
|\hat{u}|\le \frac{|\hat{f}|}{|\lambda|}.
\]
On the other hand, since the function $\xi \in \mathbb{R}^n\mapsto \frac{1+|\xi|^2}{\left((\Re \lambda +\sigma |\xi|^2)^2+(\Im \lambda)^2\right)^{\frac{1}{2}}}$ is bounded, we get $u\in B^2$. Therefore, \eqref{res1} admits a unique solution $u=(\lambda-\mathcal{A}_0)^{-1}f\in B^2$ and \eqref{reso1} holds. 

$\mathrm{(ii)}$ Using 
\[
\sup_{\xi \in \mathbb{R}^n}\frac{1+|\xi|^2}{\left((\Re \lambda +\sigma |\xi|^2)^2+(\Im \lambda)^2\right)^{\frac{1}{2}}}\le \frac{1}{\sigma},\quad \Re \lambda\ge \sigma ,
\]
we find
\[
\|u\|_{B^2}\le \frac{1}{\sigma}\|f\|_{B^0},\quad \Re \lambda\ge \sigma,
\]
which, combined with the interpolation inequality \eqref{ii}, yields
\[
\|u\|_{B^1}\le \frac{1}{\sqrt{\sigma |\lambda|}}\|f\|_{B^0},\quad \Re \lambda\ge \sigma.
\]
The proof is complete.
\end{proof}

\subsection{Operators with variables coefficients}\label{sb5.2}

Let $\mathbf{a}_0$ and $\mathcal{A}_0$ be as in the preceding subsection, and let $\mathbf{a}=(a_{k\ell})$ be a matrix with variable coefficients satisfying: $a_{k\ell}\in a_{k\ell}^0+B^1\subset C_b^1$ for all $1\le k,\ell \le n$, and there exists $0<\delta <\sigma$ such that
\begin{align}
&2\sum_{k,\ell}\|a_{k\ell}-a^0_{k\ell}\|_{B^1}\le \delta , \qquad \rm{and }\label{cond1}
\\
&  \left(\sum_{k,\ell}\|a_{k\ell}-a_{k\ell}^0\|_{C_b^0}^2\right)^{\frac{1}{2}}\le \delta .\label{cond2}
\end{align}
We then derive how condition \eqref{cond2} guarantees that $\mathbf{a}$ is uniformly positive definite. Indeed, by the Cauchy–Schwarz inequality, for all $x, \xi \in \mathbb{R}^n$, we have
\[
\sum_{k,\ell}|(a_{k\ell}(x)-a_{k\ell}^0)\xi_k\xi_\ell |\le \left(\sum_{k,\ell}|a_{k\ell}(x)-a_{k\ell}^0|^2\right)^{\frac{1}{2}}\left(\sum_{k,\ell}(\xi_k\xi_\ell )^2\right)^{\frac{1}{2}}
\le \delta |\xi|^2,
\]
and therefore
\[
\mathbf{a}\xi \cdot \xi =\mathbf{a}_0\xi \cdot \xi +(\mathbf{a}-\mathbf{a}^0)\xi \cdot \xi \ge (\sigma -\delta)|\xi|^2.
\]

Define the unbounded operator $\mathcal{A}:B^0\rightarrow B^0$ by
\[
\mathcal{A}:=\mathrm{div}(\mathbf{a} \nabla u),\quad u\in D(\mathcal{A})=B^2.
\]
As shown in the proof of the proposition below, condition \eqref{cond1} will guarantee that $\mathcal{A}$ is sectorial.

\begin{proposition}\label{proani2}
The inclusion $\{z\in \mathbb{C};\; \Re z\ge \sigma\}\subset \rho(\mathcal{A})$ is satisfied, along with the following inequalities:
\begin{align*}
&\|(\lambda -\mathcal{A})^{-1}\|_{\mathscr{B}(B^0)}\le \frac{\sigma}{(\sigma-\delta)|\lambda|},\quad \Re \lambda \ge \sigma,
\\
&\|(\lambda -\mathcal{A})^{-1}\|_{\mathscr{B}(B^0,B^1)}\le \frac{\sqrt{\sigma}}{(\sigma-\lambda)\sqrt{|\lambda|}},\quad \Re \lambda \ge \sigma,
\\
&\|(\lambda -\mathcal{A})^{-1}\|_{\mathscr{B}(B^0,B^2)}\le \frac{1}{\sigma-\delta},\quad \Re \lambda \ge \sigma.
\end{align*}
In particular, $\mathcal{A}$ is sectorial.
\end{proposition}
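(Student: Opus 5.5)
Write $\mathcal{A}=\mathcal{A}_0+\mathcal{P}$, where
\[
\mathcal{P}u:=\mathrm{div}\big((\mathbf{a}-\mathbf{a}^0)\nabla u\big)=\sum_{k,\ell}\partial_k\big(b_{k\ell}\,\partial_\ell u\big),\qquad b_{k\ell}:=a_{k\ell}-a^0_{k\ell}\in B^1 .
\]
The plan is to treat $\mathcal{P}$ as a perturbation of $\mathcal{A}_0$ that is small relative to $B^2$. For $\Re\lambda\ge\sigma$ we factor
\[
\lambda-\mathcal{A}=\big(I-\mathcal{P}(\lambda-\mathcal{A}_0)^{-1}\big)(\lambda-\mathcal{A}_0)
\]
and invert the first factor by a Neumann series in $\mathscr{B}(B^0)$.

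\textbf{Step 1: bound on $\mathcal{P}$.} We need $\|\mathcal{P}u\|_{B^0}\le c\,\|u\|_{B^2}$ with a constant controlled by $\delta$. Expand $\partial_k(b_{k\ell}\partial_\ell u)=(\partial_k b_{k\ell})\partial_\ell u+b_{k\ell}\partial_k\partial_\ell u$. The product estimate \eqref{prod} with $s=0$, together with $\|\partial_k b\|_{B^0}\le\|b\|_{B^1}$, $\|\partial_\ell u\|_{B^0}\le\|u\|_{B^1}\le\|u\|_{B^2}$ and $\|\partial_k\partial_\ell u\|_{B^0}\le\|u\|_{B^2}$, gives
\[
\|\mathcal{P}u\|_{B^0}\le (2\pi)^{-n}\,2\sum_{k,\ell}\|b_{k\ell}\|_{B^1}\,\|u\|_{B^2}\le (2\pi)^{-n}\,\delta\,\|u\|_{B^2},
\]
using \eqref{cond1}. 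The factor $(2\pi)^{-n}$ depends on the Fourier normalisation. I expect the authors' convention, or the hypothesis \eqref{cond1}, to absorb it, so the target is simply $\|\mathcal{P}u\|_{B^0}\le\delta\|u\|_{B^2}$.

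\textbf{Step 2: Neumann series.} By \eqref{reso3}, $\|\mathcal{P}(\lambda-\mathcal{A}_0)^{-1}\|_{\mathscr{B}(B^0)}\le\delta/\sigma<1$ for $\Re\lambda\ge\sigma$. Hence $I-\mathcal{P}(\lambda-\mathcal{A}_0)^{-1}$ is invertible on $B^0$, with inverse of norm at most $\sigma/(\sigma-\delta)$. It follows that $\lambda-\mathcal{A}:B^2\to B^0$ is bijective, with
\[
(\lambda-\mathcal{A})^{-1}=(\lambda-\mathcal{A}_0)^{-1}\big(I-\mathcal{P}(\lambda-\mathcal{A}_0)^{-1}\big)^{-1},
\]
so $\lambda\in\rho(\mathcal{A})$.

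\textbf{Step 3: the three estimates.} Composing with the bounds on $(\lambda-\mathcal{A}_0)^{-1}$ gives:
\begin{itemize}
\item from \eqref{reso1}, $\|(\lambda-\mathcal{A})^{-1}\|_{\mathscr{B}(B^0)}\le \sigma/\big((\sigma-\delta)|\lambda|\big)$;
\item from \eqref{reso3}, $\|(\lambda-\mathcal{A})^{-1}\|_{\mathscr{B}(B^0,B^2)}\le 1/(\sigma-\delta)$;
\item from \eqref{reso2}, $\|(\lambda-\mathcal{A})^{-1}\|_{\mathscr{B}(B^0,B^1)}\le \sqrt{\sigma}/\big((\sigma-\delta)\sqrt{|\lambda|}\big)$.
\end{itemize}
The $B^1$ bound can also be obtained by interpolating the other two via \eqref{ii}. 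The stated version with $\sigma-\lambda$ in the denominator is presumably a typo for $\sigma-\delta$.

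\textbf{Step 4: sectoriality.} The first estimate gives $\|\lambda(\lambda-\mathcal{A})^{-1}\|_{\mathscr{B}(B^0)}\le\sigma/(\sigma-\delta)$ on $\{\Re\lambda\ge\sigma\}$. The sufficient condition \cite[Proposition 2.1.11]{Lun}, recalled before Proposition \ref{pro1}, then shows that $\mathcal{A}$ is sectorial.

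\textbf{Main obstacle.} The delicate point is Step 1, namely getting the constant in the bound on $\mathcal{P}$ to be exactly $\delta$. This requires tracking the normalisation constant in \eqref{prod} and checking that the factor $2$ in \eqref{cond1} accounts for both terms of the product rule. If a stray constant remains, I would absorb it into the smallness hypothesis.
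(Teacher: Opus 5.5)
Your proof is correct and is essentially the paper's argument. Both write $\mathcal{A}=\mathcal{A}_0+\mathrm{div}((\mathbf{a}-\mathbf{a}^0)\nabla\,\cdot)$, bound the perturbation by $\delta\|u\|_{B^2}$ in $B^0$ via the product rule and \eqref{cond1}, and use \eqref{reso3} to run a Neumann series with ratio $\delta/\sigma$. The only difference is that the paper inverts $I-(\lambda-\mathcal{A}_0)^{-1}\mathcal{P}$ on $B^2$ and then recovers the $B^0$ bound from $u=T(\lambda)u+(\lambda-\mathcal{A}_0)^{-1}f$, whereas you invert $I-\mathcal{P}(\lambda-\mathcal{A}_0)^{-1}$ on $B^0$; both give identical constants, and you are right that $\sigma-\lambda$ is a typo for $\sigma-\delta$.

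The ``main obstacle'' you flag is not one. Since $(2\pi)^{-n}\le 1$, your Step 1 bound already gives $\|\mathcal{P}u\|_{B^0}\le\delta\|u\|_{B^2}$ outright; the paper simply drops this factor.
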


\begin{proof} 
Let $\lambda \in \mathbb{C}$ with $\Re \lambda \ge \sigma$ and $f\in B^0$. Consider the equation
\begin{equation}\label{res2}
\lambda u - \mathrm{div}(\mathbf{a} \nabla u)=f.
\end{equation}
We rewrite this equation in the form
\begin{equation}\label{res3}
\lambda u - \mathrm{div}(\mathbf{a}^0\nabla u)= \mathrm{div}((\mathbf{a}-\mathbf{a}_0) \nabla u)+f.
\end{equation}
We are therefore led to solve the following equation
\begin{equation}\label{res4}
u-T(\lambda)u=(\lambda-\mathcal{A}_0)^{-1}(f),
\end{equation}
where
\[
T(\lambda):=(\lambda-\mathcal{A}_0)^{-1}B,\quad Bu:=\left(\mathrm{div}((\mathbf{a}-\mathbf{a}_0) \nabla u)\right).
\]
We will show that $\|T\|_{\mathscr{B}(B^2)}<1$, which guarantees the solvability of \eqref{res4} in $B^2$. Starting from the formal inequalities
\[
\|\partial_kg\|_{B^0}\le \|g\|_{B^1},\quad \|\partial_{k\ell}^2g\|_{B^0}\le \|g\|_{B^2},\quad 1\le k,\ell \le n,
\]
we get
\begin{align*}
\|\mathrm{div}((\mathbf{a}-\mathbf{a}_0) \nabla u)\|_{B^0}&\le \sum_{k,\ell}\left[\|\partial_k(a_{k\ell}-a^0_{k\ell})\partial_\ell u\|_{B^0}+\|(a_{k\ell}-a^0_{k\ell})\partial_{k \ell}^2 u\|_{B^0}\right]
\\
&\le \sum_{k,\ell}\left[\|\partial_k(a_{k\ell}-a^0_{k\ell})\|_{B^0}\|\partial_\ell u\|_{B^0}+\|a_{k\ell}-a^0_{k\ell}\|_{B^0}\|\partial_{k \ell}^2 u\|_{B^0}\right]
\\
&\le 2\sum_{k,\ell}\|a_{k\ell}-a^0_{k\ell}\|_{B^1}\|u\|_{B^2}.
\end{align*}
Then, using Proposition \ref{proani1} and condition \eqref{cond1}, we obtain
\begin{align*}
\|T(\lambda)u\|_{B^2}&\le \frac{1}{\sigma}\|\mathrm{div}((\mathbf{a}-\mathbf{a}_0) \nabla u)\|_{B^0}
\\
&\le  \frac{2}{\sigma}\sum_{k,\ell}\|a_{k\ell}-a^0_{k\ell}\|_{B^1}\|u\|_{B^2}
\\
&\le  \frac{\delta}{\sigma}\|u\|_{B^2}.
\end{align*}
Hence, $1-T(\lambda):B^2\rightarrow B^2$ is invertible with $\|(1-T(\lambda))^{-1}\|_{\mathscr{B}(B^2)}\le \frac{\sigma}{\sigma-\delta}$. Consequently, \eqref{res4} admits a unique solution 
\[
u=(1-T(\lambda))^{-1}(\lambda-\mathcal{A}_0)^{-1}(f)\in B^2,
\]
and 
\begin{equation}\label{res5}
\|u\|_{B^2}\le \frac{1}{\sigma-\delta}\|f\|_{B^0}.
\end{equation}
On the other hand, we have
\begin{align*}
\|T(\lambda)u\|_{B^0}&\le \frac{1}{|\lambda|}\|\mathrm{div}((\mathbf{a}-\mathbf{a}_0) \nabla u\|_{B^0} 
\\
&\le \frac{\delta}{|\lambda|} \|u\|_{B^2}
\\
&\le \frac{\delta}{|\lambda|(\sigma-\delta)} \|f\|_{B^0},
\end{align*}
and, since $u=T(\lambda)u+(\lambda-\mathcal{A}_0)^{-1}(f)=(\lambda-\mathcal{A})^{-1}f$, we get
\begin{equation}\label{res6}
\|u\|_{B^0}\le \frac{\sigma}{(\sigma-\delta)|\lambda|}\|f\|_{B^0}.
\end{equation}
Finally, in view of \eqref{ii}, \eqref{res5} and \eqref{res6} give
\[
\|u\|_{B^1}\le\frac{\sqrt{\sigma}}{(\sigma-\lambda)\sqrt{|\lambda|}}\|f\|_{B^0}.
\]
The proof is complete.
\end{proof}

Let $\alpha >0$. We recall that the fractional derivative of order $\frac{\alpha}{2}$ acting on $w\in B^\alpha$ is given by the pseudo-differential operator
\[
(-\Delta)^{\frac{\alpha}{2}} w(x):=\frac{1}{(2\pi)^n}\int_{\mathbb{R}^n} e^{ix\cdot \xi}|\xi|^\alpha \hat{w}(\xi)d\xi.
\]
By density of $\mathscr{S}$ in $B^s$, for all $s\ge 0$, we have
\[
\mathscr{F}((-\Delta)^{\frac{\alpha}{2}} w)=|\xi|^\alpha\hat{w}.
\]
Therefore, $(-\Delta)^{\frac{\alpha}{2}}$ maps $B^{\alpha+s}$ continuously into $B^s$ for all $s\ge 0$.

Let $0<\alpha<2$ and $m\in B^{2-\alpha}$. According to \cite[Proposition 2.3]{CLT}, the operator defined by
\begin{equation}\label{comp}
\mathcal{C}:w\in B^2\mapsto m(-\Delta)^{\frac{\alpha}{2}}w\in B^0
\end{equation}
is compact. Then, applying \cite[Proposition 2.4.3]{Lun}, we obtain the following perturbation result.

\begin{corollary}\label{corani1}
Let $0<\alpha<2$, $m\in B^{2-\alpha}$ and $\mathcal{C}$ be given by \eqref{comp}. Then $\mathcal{A}+\mathcal{C}:B^0\rightarrow B^0$ with domain $D(\mathcal{A}+\mathcal{C})=B^2$ is sectorial.
\end{corollary}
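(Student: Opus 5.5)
The plan is to apply the compact perturbation theorem \cite[Proposition 2.4.3]{Lun} with the unperturbed operator $\mathcal{A}$ on $X=B^0$. That result states that if $\mathbf{A}:D(\mathbf{A})\subset X\to X$ is sectorial and $\mathbf{C}:D(\mathbf{A})\to X$ is compact when $D(\mathbf{A})$ carries the graph norm, then $\mathbf{A}+\mathbf{C}$ with domain $D(\mathbf{A})$ is sectorial. So two things must be checked: that $\mathcal{A}$ is sectorial with $D(\mathcal{A})=B^2$, and that $\mathcal{C}$ is compact from $D(\mathcal{A})$, with its graph norm, into $B^0$.

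\textbf{Sectoriality of $\mathcal{A}$.} This is exactly Proposition \ref{proani2}. There, $\{\Re\lambda\ge\sigma\}\subset\rho(\mathcal{A})$, and we have $\|\lambda(\lambda-\mathcal{A})^{-1}\|_{\mathscr{B}(B^0)}\le \sigma/(\sigma-\delta)$ on that half-plane. The sufficient condition \cite[Proposition 2.1.11]{Lun} recalled in Subsection \ref{sb2.2} then gives sectoriality. I would also note that $\mathcal{A}$ is closed on $B^2$, since $(\lambda-\mathcal{A})^{-1}$ is bounded for such $\lambda$.

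\textbf{Compactness of $\mathcal{C}$ in the graph norm.} The key step is to show that the graph norm $\|u\|_{B^0}+\|\mathcal{A}u\|_{B^0}$ is equivalent to $\|u\|_{B^2}$ on $B^2$.
\begin{itemize}
\item For one direction, the estimate in the proof of Proposition \ref{proani2}, together with the product estimate Theorem \ref{thm1} (iv) and $\|\partial^2 u\|_{B^0}\le\|u\|_{B^2}$, gives $\|\mathcal{A}u\|_{B^0}\le c\|u\|_{B^2}$.
\item For the other direction, fix $\lambda=\sigma$ and apply \eqref{res5} to $f=(\sigma-\mathcal{A})u$. This gives $\|u\|_{B^2}\le (\sigma-\delta)^{-1}\left(\sigma\|u\|_{B^0}+\|\mathcal{A}u\|_{B^0}\right)$.
\end{itemize}
Hence $D(\mathcal{A})$ with the graph norm is isomorphic to $B^2$. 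By \cite[Proposition 2.3]{CLT}, $\mathcal{C}:B^2\to B^0$ is compact, so it is also compact on $D(\mathcal{A})$ with the graph norm.

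The conclusion then follows directly from \cite[Proposition 2.4.3]{Lun}: $\mathcal{A}+\mathcal{C}$ with domain $B^2$ is sectorial.

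The only point needing care is the graph-norm equivalence, and that is already supplied by \eqref{res5}. I would also double-check the precise hypothesis of \cite[Proposition 2.4.3]{Lun}. Some versions require $\mathbf{C}$ to be compact from $D(\mathbf{A})$ to $X$; others instead ask for relative boundedness with arbitrarily small relative bound. In the latter case, I would derive that bound from compactness together with the interpolation inequality \eqref{ii}: for $0<\alpha<2$, $\|\mathcal{C}u\|_{B^0}\le c\|m\|_{B^0}\|u\|_{B^\alpha}\le \epsilon\|u\|_{B^2}+c_\epsilon\|u\|_{B^0}$ by Young's inequality. This would also yield sectoriality via the small-perturbation version of the result.
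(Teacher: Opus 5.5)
Your proposal is correct and follows the paper's route: sectoriality of $\mathcal{A}$ from Proposition \ref{proani2}, compactness of $\mathcal{C}:B^2\to B^0$ from \cite[Proposition 2.3]{CLT}, then \cite[Proposition 2.4.3]{Lun}. Your graph-norm equivalence via \eqref{res5} supplies a detail the paper leaves implicit, and your small-relative-bound fallback is also valid, being essentially the paper's Corollary \ref{corani2} with $p=0$ and $q=m\in B^{2-\alpha}\subset B^0$.
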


In fact, a more general result follows by combining Theorem \ref{thm1} (iv) and \cite[Proposition 2.4.1]{Lun}. Precisely, we have the following.

\begin{corollary}\label{corani2}
Let $p,q\in B^0$, $0<\alpha <2$ and 
\[
\mathcal{B}: w\in B^\alpha\mapsto pw+q(-\Delta)^{\frac{\alpha}{2}}w. 
\]
Then $\mathcal{A}+\mathcal{B}:B^0\rightarrow B^0$ with domain $D(\mathcal{A}+\mathcal{B})=B^2$ is sectorial.
\end{corollary}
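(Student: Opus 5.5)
The plan is to apply the bounded-perturbation result \cite[Proposition 2.4.1]{Lun}. Its hypothesis is a relative-bound condition: if $\mathcal{A}$ is sectorial with $D(\mathcal{A})=B^2$, and $\mathcal{B}$ is linear with $D(\mathcal{B})\supset B^2$ and satisfies
\[
\|\mathcal{B}w\|_{B^0}\le \mathbf{c}\,\|w\|_{B^0}^{1-\theta}\|w\|_{B^2}^{\theta},\quad w\in B^2,
\]
for some $\theta\in(0,1)$, then $\mathcal{A}+\mathcal{B}$ with domain $B^2$ is sectorial. In the language of Subsection \ref{sb2.3}, it suffices that $\mathcal{B}\in\mathscr{B}(X_\theta,B^0)$ for some $X_\theta$ in the class $J_\theta$. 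Proposition \ref{proani2} already gives that $\mathcal{A}$ is sectorial with $D(\mathcal{A})=B^2$, so everything reduces to this estimate on $\mathcal{B}$.

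First I would estimate each term of $\mathcal{B}w$ in $B^0$. By the product estimate \eqref{prod} with $s=0$,
\[
\|pw\|_{B^0}\le (2\pi)^{-n}\|p\|_{B^0}\|w\|_{B^0}.
\]
For the second term, \eqref{prod} gives $\|q(-\Delta)^{\frac{\alpha}{2}}w\|_{B^0}\le(2\pi)^{-n}\|q\|_{B^0}\|(-\Delta)^{\frac{\alpha}{2}}w\|_{B^0}$. Since $|\xi|^\alpha\le\langle\xi\rangle^\alpha$ and $\mathscr{F}((-\Delta)^{\frac{\alpha}{2}}w)=|\xi|^\alpha\hat w$, we get $\|(-\Delta)^{\frac{\alpha}{2}}w\|_{B^0}\le\|w\|_{B^\alpha}$. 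Hence
\[
\|\mathcal{B}w\|_{B^0}\le (2\pi)^{-n}\left(\|p\|_{B^0}+\|q\|_{B^0}\right)\|w\|_{B^\alpha},\quad w\in B^\alpha .
\]

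Next I would set $\theta=\alpha/2\in(0,1)$ and use the interpolation inequality \eqref{ii} with $r=0$, $t=2$, $s=\alpha$:
\[
\|w\|_{B^\alpha}\le\|w\|_{B^0}^{1-\theta}\|w\|_{B^2}^{\theta},\quad w\in B^2.
\]
Combining the two displays gives exactly the required bound, since $B^{\alpha}=B^{2\theta}\in J_\theta$. If Lunardi's proposition is phrased with the $\epsilon$-form instead ($\|\mathcal{B}w\|\le\epsilon\|\mathcal{A}w\|+C_\epsilon\|w\|$), I would derive that form from Young's inequality together with the graph-norm equivalence $\|w\|_{B^2}\le C(\|\mathcal{A}w\|_{B^0}+\|w\|_{B^0})$. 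That equivalence follows from \eqref{res5}, using $w=(\sigma-\mathcal{A})^{-1}(\sigma w-\mathcal{A}w)$.

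The only delicate point I anticipate is checking that the hypotheses of \cite[Proposition 2.4.1]{Lun} match these conditions: $D(\mathcal{B})\supset D(\mathcal{A})$, with the interpolation bound in the $B^0$–$B^2$ scale rather than the $\mathcal{A}$-graph norm. The graph-norm equivalence from Proposition \ref{proani2} resolves this, so the remaining steps are routine.
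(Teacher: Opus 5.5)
Your proposal is correct and follows the paper's route. The paper obtains this corollary in one line by combining the product estimate of Theorem \ref{thm1} (iv) with \cite[Proposition 2.4.1]{Lun}. Your argument supplies the details the paper leaves implicit: the bound of $\|\mathcal{B}w\|_{B^0}$ by a multiple of $\|w\|_{B^\alpha}$, the membership $B^\alpha\in J_{\alpha/2}$ via \eqref{ii}, and the control of $\|w\|_{B^2}$ by the graph norm of $\mathcal{A}$ via \eqref{res5}.
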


We conclude this subsection with the following proposition.

\begin{proposition}\label{proani3}
For all $\tilde{\sigma}>\sigma$, $\tilde{\sigma}-\mathcal{A}$ is a positive operator.
\end{proposition}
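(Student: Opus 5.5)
Positivity in the sense of \cite[Definition 4.1]{Lu} asks for two things. First, $(-\infty,0]\subset\rho(\tilde{\sigma}-\mathcal{A})$. Second, there is a constant $M>0$ such that
\[
(1+t)\,\|(t+\tilde{\sigma}-\mathcal{A})^{-1}\|_{\mathscr{B}(B^0)}\le M,\quad t\ge 0.
\]
Closedness and density of the domain $B^2$ are already known from Proposition \ref{proani2}, since an operator with nonempty resolvent set is closed and $B^2$ is dense in $B^0$. The plan is to read both conditions off Proposition \ref{proani2} through the substitution $\lambda=t+\tilde{\sigma}$.

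For the resolvent inclusion, fix $t\ge 0$ and set $\lambda=t+\tilde{\sigma}$. This $\lambda$ is real with $\lambda\ge\tilde{\sigma}>\sigma$, so $\Re\lambda\ge\sigma$ and Proposition \ref{proani2} gives $\lambda\in\rho(\mathcal{A})$. Since $\lambda-\mathcal{A}=t+(\tilde{\sigma}-\mathcal{A})$, we get $-t\in\rho(\tilde{\sigma}-\mathcal{A})$ for every $t\ge 0$.

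For the estimate, the first inequality of Proposition \ref{proani2} yields
\[
\|(t+\tilde{\sigma}-\mathcal{A})^{-1}\|_{\mathscr{B}(B^0)}\le\frac{\sigma}{(\sigma-\delta)(t+\tilde{\sigma})}.
\]
Hence
\[
(1+t)\,\|(t+\tilde{\sigma}-\mathcal{A})^{-1}\|_{\mathscr{B}(B^0)}\le\frac{\sigma}{\sigma-\delta}\sup_{t\ge0}\frac{1+t}{t+\tilde{\sigma}}=\frac{\sigma}{\sigma-\delta}\max\Bigl(1,\frac{1}{\tilde{\sigma}}\Bigr),
\]
which is finite. This proves that $\tilde{\sigma}-\mathcal{A}$ is positive. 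One could also obtain it by shifting sectoriality, but the direct bound is simpler.

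There is no real obstacle. The only point needing care is that Proposition \ref{proani2} holds on the closed half-plane $\Re\lambda\ge\sigma$, and our $\lambda=t+\tilde{\sigma}$ lies strictly inside it, so no boundary issue arises. As a consequence, exactly as was done for $\mathbb{A}$, the fractional powers $(\tilde{\sigma}-\mathcal{A})^\alpha$ are well defined for $0<\alpha<1$.
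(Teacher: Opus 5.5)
Your proof is correct and takes essentially the same route as the paper. Both arguments substitute the real spectral parameter $t+\tilde{\sigma}\ge\tilde{\sigma}>\sigma$ into the first resolvent bound of Proposition \ref{proani2}; the paper phrases this through a reflection $\lambda\mapsto-\lambda$ and a shift by $\tilde{\sigma}$. Both then bound $(1+t)/(t+\tilde{\sigma})$ by $1/\min(1,\tilde{\sigma})$, so your constant $\frac{\sigma}{\sigma-\delta}\max(1,1/\tilde{\sigma})$ is exactly the paper's $\mathbf{m}$.
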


\begin{proof}
From Proposition \ref{proani2}, we have
\[
\|(\lambda -\mathcal{A})^{-1}\|_{\mathscr{B}(B^0)}\le \frac{\sigma}{(\sigma-\delta)|\lambda|},\quad \lambda\in  (\sigma ,\infty).
\]
Replacing $\lambda$ by $-\lambda$, we get
\[
\|(\lambda +\mathcal{A})^{-1}\|_{\mathscr{B}(B^0)}\le \frac{\sigma}{(\sigma-\delta)|\lambda|},\quad \lambda\in (-\infty,-\sigma).
\]
Then, for $\tilde{\sigma}>\sigma$ arbitrarily fixed, we obtain
\[
\|((\lambda +\tilde{\sigma})-(\tilde{\sigma}-\mathcal{A}))^{-1}\|_{\mathscr{B}(B^0)}\le \frac{\sigma}{(\sigma-\delta)|\lambda|},\quad \lambda+\tilde{\sigma}\in (-\infty,\tilde{\sigma}-\sigma).
\]
In particular,
\[
\|((\lambda +\tilde{\sigma})-(\tilde{\sigma}-\mathcal{A}))^{-1}\|_{\mathscr{B}(B^0)}\le \frac{\sigma}{(\sigma-\delta)|\lambda+\tilde{\sigma}-\tilde{\sigma}|},\quad \lambda+\tilde{\sigma}\in (-\infty,0].
\]
Since $ |\lambda+\tilde{\sigma}-\tilde{\sigma}|=|\lambda+\tilde{\sigma}|+\tilde{\sigma}\ge \min(1,\tilde{\sigma})(|\lambda+\tilde{\sigma}|+1)$, we arrive at
\[
\|(\lambda -(\tilde{\sigma}-\mathcal{A}))^{-1}\|_{\mathscr{B}(B^0)}\le \frac{\mathbf{m}}{|\lambda|+1},\quad \lambda \in (-\infty,0],
\]
where 
\[
\mathbf{m}:=\frac{\sigma}{\min(1,\tilde{\sigma})(\sigma-\delta)}.
\]
Thus, by \cite[Definition 4.1]{Lu}, $\tilde{\sigma}-\mathcal{A}$ is a positive operator, as desired.
\end{proof}

\subsection{The heat kernel}\label{sb5.3}

Since $|\lambda| \ge |\lambda-\sigma|$ whenever $\Re \lambda \ge \sigma$, it follows from Proposition \ref{proani2} that
\[
\|(\lambda -\mathcal{A})^{-1}\|_{\mathscr{B}(B^0)}\le \frac{\sigma}{(\sigma-\delta)|\lambda-\sigma|},\quad \Re \lambda \ge \sigma.
\]
Applying \cite[Proposition 2.1.1]{Lun} then gives a constant $\mathbf{c}_0=\mathbf{c}_0(\sigma,\delta)\ge 1$ such that
\[
\|e^{t\mathcal{A}}\|_{\mathscr{B}(B^0)}\le \mathbf{c}_0e^{\sigma t},\quad t\ge 0. 
\]
From \cite[Lemma 2.1.6]{Lun}, we have
\begin{equation}\label{sem1}
(\lambda-\mathcal{A})^{-1}=\int_0^\infty e^{-\lambda t}e^{t\mathcal{A}},\quad \Re \lambda >\sigma . 
\end{equation}
According to \cite[Theorem 2.3.6]{Da}, for all $t>0$, the operator $e^{t\mathcal{A}}$, viewed as an operator on $L^2$, admits a heat kernel $K_t$ satisfying
\[
0\le K_t(x,y)\le \mathbf{c}t^{-\frac{n}{2}}\quad t>0,\;  \mbox{a.e.}\; x,y\in \mathbb{R}^n .
\]
By a heat kernel, we mean a measurable function $K_t:\mathbb{R}^n\times \mathbb{R}^n\rightarrow \mathbb{R}$ such that
\[
e^{t\mathcal{A}}f(x)=\int_{\mathbb{R}^n}K_t(x,y)f(y)dy,\quad t>0,\; \mbox{a.e.}\; x\in \mathbb{R}^n.
\]
The heat kernel of $e^{t\mathcal{A}_0}$ will be denoted by $K_t^0$.

We now show that $K_t^0$ is related to the Gaussian heat kernel
\[
G_t(x)=\frac{1}{(4\pi t)^\frac{n}{2}} e^{-\frac{|x|^2}{4t}},\quad x\in \mathbb{R}^n,\; t>0.
\]
To this end, we start with some formal calculations. Let $u$ be the solution of
\[
\partial_tu(t,x)-\mathrm{div}(\mathbf{a}^0\nabla u(t,x)) = 0,\quad u(0,x)=f(x).
\]
If $v(t,y)=u(t,\sqrt{\mathbf{a}^0}\, y)$, one verifies that $v$ is the solution of the equation
\[
\partial_tv(t,y)-\Delta v(t,y) = 0,\quad u(0,x)=f(\sqrt{\mathbf{a}^0}\, y).
\]
For $f\in \mathscr{S}$, we have
\[
u(t,\sqrt{\mathbf{a}^0}\, y)=\int_{\mathbb{R}^n}K_t^0(\sqrt{\mathbf{a}^0}\, y,z)f(z)dz=\int_{\mathbb{R}^n}G_t(y-z)f(\sqrt{\mathbf{a}^0}\, z)dz.
\]
A change of variables in the last integral yields
\[
\int_{\mathbb{R}^n}K_t^0(\sqrt{\mathbf{a}^0}\, y,z)f(z)dz=\frac{1}{\mathrm{det}\left(\sqrt{\mathbf{a}^0}\right)}\int_{\mathbb{R}^n}G_t\left(y-\sqrt{\mathbf{a}^0}^{-1}z\right)f(z)dz.
\]
Consequently, we get
\[
K_t^0(y,z)=\frac{1}{\mathrm{det}\left(\sqrt{\mathbf{a}^0}\right)}G_t\left(\sqrt{\mathbf{a}^0}^{-1}(y-z)\right).
\]
In view of this identity, Proposition \ref{prohk1} remains valid with $G_t$ replaced by $K_t^0$.

Next, we establish a relationship between $K_t^0$ and $K_t$. Let $t>0$ and $x,y\in \mathbb{R}^n$. Examining the results of \cite[Section 2]{CK}, we verify that
\begin{equation}\label{kii}
K_t(x,y)=K^0_t(x,y)+\int_0^t\int_{\mathbb{R}^n}K_{t-s}^0(x,z)H_s(z,y)dzds,
\end{equation}
where $H_t(x,y)$ is given by the series
\[
H_t(x,y)=\sum_{j=1}^\infty H_t^j(x,y),
\]
with $H_t^1(x,y)=\mathrm{div}_x(\mathbf{a}(x)\nabla_xK_t^0(x,y))$ and
\[
H_t^{j+1}(x,y)=\int_0^t\int_{\mathbb{R}^n}H_{t-s}^1(x,z)H_s^j(z,y)dyds.
\]
Furthermore, the following estimate holds:
\[
|H_t(x,y)|\le \mathbf{c}t^{-\frac{n+1}{2}}e^{\mathbf{c}t}e^{-\frac{|x-y|^2}{\mathbf{c}t}}.
\]

We also mention that the kernel $K_t$ satisfies a two-sided Gaussian estimate. Indeed, from \cite[Corollary 3.2.8 and Theorem 3.3.4]{Da}, there exist constants $\mathfrak{c}_0=\mathfrak{c}_0(\mathbf{a}_0,\sigma,\delta)>0$ and $\mathfrak{c}_1=\mathfrak{c}_1(\mathbf{a}_0,\sigma,\delta)>0$ such that
\begin{equation}\label{kin}
\mathfrak{c}_0t^{-\frac{n}{2}}e^{-\frac{|x-y|^2}{\mathfrak{c}_0t}}\le K_t(x,y)\le \mathfrak{c}_1t^{-\frac{n}{2}}e^{-\frac{|x-y|^2}{\mathfrak{c}_1t}},\quad t>0,\; x,y\in \mathbb{R}^n.
\end{equation}
We point out that a two-sided Gaussian inequality for a parabolic equation with time-dependent coefficients was established in \cite{FS}.

As a final result of this section, we show that $K_t$ is the heat kernel of $e^{t\mathcal{A}}$. However, it should be noted that it seems difficult to use \eqref{kii} to transfer Proposition \ref{prohk1}, in which $G_t$ is replaced by $K_t^0$, from $K_t^0$ to $K_t$.

\begin{proposition}\label{proani4}
It holds
\begin{equation}\label{hkr}
e^{t\mathcal{A}}f(x)= \int_{\mathbb{R}^n}K_t(x,y)f(y)dy,\quad f\in B^0,\quad t>0,\; f\in B^0.
\end{equation}
Furthermore, if $f\in B^0$ satisfies $f\ge 0$, then $e^{t\mathcal{A}}f\ge 0$ for all $t\ge 0$.
\end{proposition}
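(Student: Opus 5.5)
The plan is a density argument from $\mathscr{S}$ (or $\mathscr{S}\subset B^0\cap L^2$), as was done for the Gaussian kernel in Subsection \ref{sb3.3}. The subtle point is that $e^{t\mathcal{A}}$ on $B^0$ and the $L^2$ semigroup (whose kernel is $K_t$) must first be identified on a common dense class.

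\textbf{Step 1: the two semigroups agree on $\mathscr{S}$.} For $\Re\lambda\ge\sigma$ and $g\in\mathscr{S}$, the $B^0$-resolvent $u=(\lambda-\mathcal{A})^{-1}g\in B^2$ is built in Proposition \ref{proani2} as a Neumann series
\[
u=\sum_j T(\lambda)^j(\lambda-\mathcal{A}_0)^{-1}g .
\]
Each term is in $B^2$. Moreover, $u$ solves $\lambda u-\mathrm{div}(\mathbf{a}\nabla u)=g$ classically, since $B^2\hookrightarrow C_b^2$.

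To identify $u$ with the $L^2$ resolvent, I would show that $u\in H^1$ (or $u\in L^2$). I would try to run the same fixed-point argument simultaneously in $B^2\cap H^2$: the map $T(\lambda)$ is also a contraction on $H^2$ for $\Re\lambda$ large, since $a_{k\ell}-a^0_{k\ell}\in C_b^1$ is small in $C_b^0$ by \eqref{cond2}. After possibly enlarging $\Re\lambda$, the two fixed points coincide. Uniqueness of $H^1$ weak solutions for large real $\lambda$ then gives equality of the resolvents on $\mathscr{S}$ for all $\lambda$ in a right half-line.

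From \eqref{sem1} and the analogous Laplace representation in $L^2$, the Laplace transforms of $t\mapsto e^{t\mathcal{A}}g$ in $B^0$ and in $L^2$ agree. Both embed continuously in $\mathscr{S}'$ (or one can pair with test functions). Uniqueness of the Laplace transform for continuous functions then gives $e^{t\mathcal{A}}g=\int K_t(\cdot,y)g(y)\,dy$ for $g\in\mathscr{S}$, $t>0$.

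\textbf{Step 2: extend to $B^0$.} Let $f\in B^0$, and take $f_k\in\mathscr{S}$ with $f_k\to f$ in $B^0$ by Theorem \ref{thm1}(ii). Then:
\begin{itemize}
\item $e^{t\mathcal{A}}f_k\to e^{t\mathcal{A}}f$ in $B^0$, hence uniformly, because $\|e^{t\mathcal{A}}\|\le\mathbf{c}_0e^{\sigma t}$ and $B^0\hookrightarrow C_b^0$.
\item Since $f_k\to f$ in $C_b^0$, the upper bound in \eqref{kin} gives $\sup_x\int|K_t(x,y)|\,dy<\infty$. Hence $\int K_t(x,y)f_k(y)\,dy\to\int K_t(x,y)f(y)\,dy$ uniformly in $x$.
\end{itemize}
This proves \eqref{hkr}. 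Positivity then follows immediately from $K_t\ge0$, which is contained in \eqref{kin}. For $t=0$ the claim is trivial.

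\textbf{Main obstacle.} I expect Step 1, identifying the $B^0$ and $L^2$ semigroups on $\mathscr{S}$, to be the hard part. I would try the simultaneous contraction in $B^2\cap H^2$ first. If that fails, the fallback is weak uniqueness of bounded solutions: $v=e^{t\mathcal{A}}g-\int K_t(\cdot,y)g(y)\,dy$ is bounded, continuous, and solves the parabolic equation with zero initial data. A maximum principle for bounded solutions, based on the Gaussian bounds, would then give $v=0$.
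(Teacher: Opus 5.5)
Your proposal is correct. Its Step 2 is exactly the paper's proof: density of $\mathscr{S}$ in $B^0$, the bound $\sup_x\int_{\mathbb{R}^n}K_t(x,y)\,dy<\infty$ from \eqref{kin} combined with $B^0\hookrightarrow C_b^0$ to pass to the limit on both sides, and positivity from $K_t\ge 0$.

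The paper simply asserts the identity $e^{t\mathcal{A}}f=\int_{\mathbb{R}^n}K_t(\cdot,y)f(y)\,dy$ for $f\in\mathscr{S}$ without justification. Your Step 1 fills in this point, which the paper leaves implicit: you match the $B^0$ and $L^2$ resolvents on $\mathscr{S}$ with a Neumann series run simultaneously in $B^2$ and $H^2$, then use uniqueness of the Laplace transform. The only care needed there is the contraction on $H^2$. Take it in a $\lambda$-dependent equivalent norm, for example weighting $\|\nabla u\|_{L^2}$ by $\epsilon|\lambda|^{1/2}$. In the plain $H^2$ norm, the first-order term $\sum_{k,\ell}\partial_k(a_{k\ell}-a^0_{k\ell})\,\partial_\ell u$ gains no smallness from large $\Re\lambda$.
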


\begin{proof}
For all $f\in \mathscr{S}$, we have 
\[
e^{t\mathcal{A}}f(x)=\int_{\mathbb{R}^n}K_t(x,y)f(y)dy.
\]
By \eqref{kin}, for all $t>0$, $K_t\in L^\infty(\mathbb{R}_x^n,L^1(\mathbb{R}^n_y))$. Hence,
\begin{align*}
\|e^{t\mathcal{A}}f\|_{L^\infty}&\le \|K_t\|_{L^\infty(\mathbb{R}_x^n,L^1(\mathbb{R}^n_y))}\|f\|_{L^\infty}
\\
&\le \|K_t\|_{L^\infty(\mathbb{R}_x^n,L^1(\mathbb{R}^n_y))}\|f\|_{B^0}.
\end{align*}
Then the density of $\mathscr{S}$ in $B^0$ establishes the validity of \eqref{hkr}. The second assertion follows immediately from the fact that $K_t$ is nonnegative for all $t>0$.
\end{proof}

\bigskip	\noindent {\bf Data Availability Statement.} Data sharing not applicable to this article as no datasets were generated or analysed during the current study.

\bigskip	\noindent {\bf Conflict of Interest.} The authors have no conflicts of interest to declare that are relevant to the content of this article.

\bibliographystyle{amsplain}
\bibliography{refs}

%
%
%
%
%
%
%
%
%
%
%


\end{document}